\def\N{\mathbb N}
\newcommand{\Z}{\mathbb{Z}}
\def \E{\mathbb E}
\def \R{\mathbb R}
\def \E1{\mathcal E}
\def\zd{{\mathbb Z^d}}
\def \H{\mathcal H}
\newcommand{\supp}{\text{Supp}}
\newcommand{\Pred}{\text{\tt{Predict}}}
\newcommand{\SIPp}{\text{SIP}^+}
\newenvironment{dedication}{\begin{quotation}\vspace{2 pt}\begin{center}\begin{em}}{\par\end{em}\end{center}\vspace{2 pt}\end{quotation}}
\newtheorem{proposition}{Proposition}
\newtheorem{theorem}{Theorem}
\newtheorem{lemma}{Lemma}
\newtheorem{corollary}{Corollary}
\newtheorem*{definition}{Definition}
\newtheorem*{example}{Example}
\newtheorem{question}{Question}
\def\N{\mathbb N}
\def \E{\mathbb E}
\def \R{\mathbb R}
\def \E1{\mathcal E}
\def \SIP{\textit{SIP}_+}
\def \H{\mathcal H}
\def\SIP{\text{SIP}^\star}
\def\0{\mathbf 0}
\theoremstyle{remark}
\newtheorem{remark}{Remark}
\title{Predictive Sets}
\author{
	Nishant Chandgotia
	\and
	Benjamin Weiss
}
\address{Einstein Institute of Mathematics, Hebrew University of Jerusalem, Israel}
\email{nishant.chandgotia@gmail.com}
\address{Einstein Institute of Mathematics, Hebrew University of Jerusalem, Israel}
\email{weiss@math.huji.ac.il}
\subjclass[2010]{Primary 60G25; Secondary 37A05}
\keywords{zero entropy processes, predictive sets, SIPs, return-times sets, Gaussian processes, Riesz sets}
\begin{document}
	\large
		\maketitle
	\begin{dedication}
		Dedicated to our dear friend and colleague Manfred Denker on his diamond jubilee.
	\end{dedication}
		\begin{abstract}
		A set $P\subset \N$ is called predictive if for any zero entropy finite-valued stationary process $(X_i)_{i\in \Z}$, $X_0$ is measurable with respect to $(X_{-i})_{i\in P}$. We know that $\N$ is a predictive set. In this paper we give sufficient conditions and necessary ones for a set to be predictive. We also discuss linear predictivity, predictivity among Gaussian processes and relate these to Riesz sets which arise in harmonic analysis.
	\end{abstract}

\section{Introduction}
A deterministic stationary stochastic process $\{X_n\}_{n\in \Z}$ is a process for which $X_0$ is a measurable function of the past $\{X_{-n}\}_{n\in \N}$. If the process is finite-valued as well then this is equivalent to it having zero entropy. If a process has zero entropy so does the process formed by sampling along an arithmetic progression and so $H(X_0~|~X_{-nk}; n\in \N)=0$ for all positive integers $k$. In other words, $X_0$ is a measurable function of $\{X_{-kn}\}_{n\in \N}$ for all $k\in \N$. This motivates the following definition.
\begin{definition}
	A subset $P\subset \N$ is predictive if for all finite-valued zero entropy stationary processes $\{X_n\}_{n\in \Z}$ we have that
	$$H(X_0~|~X_{-p}; p\in P)=0.$$
\end{definition}
The aim of this note is to explore $\Pred$, the set of all predictive sets. While we have not found a characterisation of predictive sets, we have found some necessary conditions and some sufficient ones. There are many open questions which we have been unable to answer and are interspersed throughout our discussion.

Here are some of the main results that we found.

\smallskip
\noindent\textbf{Sufficient conditions: }If $(X, \mathcal B, \mu, T)$ is a probability preserving transformation and $U\subset X$ is a set of positive measure then $\{n\in \N~:~\mu(U\cap T^{-n}(U))>0\}$ is called the return-time set for $U$. Return-time sets are predictive.

\smallskip
\noindent\textbf{Necessary conditions: }A predictive set has bounded gaps. This follows from a stronger statement that it must intersect non-trivially all SIPs (symmetric infinite parallelepipeds). These are sets of the form $$\{\sum_i \epsilon_ip_i~:~\epsilon_i\in \{-1,0,1\}\}$$ where $\{p_i\}_{i\in \N}$ is a sequence of natural numbers.

We will also discuss linear prediction, which means that $X_0$ is in the closed linear space spanned by $\{X_{-n}~:~n\in \N\}$ where the closure is taken in the $L^2$-norm. This of course is tied to the theory of Gaussian processes which will also play a key role in some of the proofs. 

Almost all of the results that we have found can be readily extended to stationary random fields indexed by $\zd$ with an appropriate notion of the past. 

Note that the fact that $P$ is a predictive set doesn't mean that given $X_{-p}; p \in P$ we know the entire future $X_i; i\geq 0$ as is the case for $P=\N$. For this we define a stronger property: Totally predictive sets.

\begin{definition}
A set $P\subset \N$ is \emph{totally predictive} if for all zero entropy stationary processes $\{X_n\}_{n\in \Z}$ and $q\in \Z$ we have that 
$$H(X_q~|~X_{-p};p \in P)=0.$$
\end{definition}
We prove that if $P$ is predictive then for all singular probability measures $\mu$ on the circle we must have that $\hat\mu(p)\neq 0$ for some $p\in P$. We also show that if $P$ is totally predictive then for all singular complex-valued measures $\mu$ on the circle we must have that 
$\hat\mu(p)\neq 0$ for some $p\in P$. Such sets are very closely related to Riesz sets which we define next.
\begin{definition}
A set $Q\subset \Z$ is a \emph{Riesz set} if all measures $\mu$ on the circle satisfying
$\hat\mu (q)=0$ for $q\in \Z\setminus Q$ are absolutely continuous.
\end{definition}
These sets were defined by Yves Meyer in \cite{MR227697} and extensively studied over the years. While we could not prove an implication either way, we find that the techniques developed to prove that a set $Q\subset \Z$ is Riesz help us prove that the set $\N\setminus Q$ is totally predictive for many examples. As a partial result in this direction we show that if $\N\setminus Q$ is totally predictive and open in the Bohr topology restricted to $\N$ (defined later in Section \ref{section:Totally predictive}) then $Q\cup (\Z\setminus \N)$ is a Riesz set.

The paper is structured in the following way. The notation used in the paper is given in Section \ref{Section: Notation}. In Section \ref{section:return-time sets} we prove that return-time sets are predictive and describe many of its consequences. In Section \ref{Section:SIP} we give necessary conditions for a set to be predictive, namely that it should be an $\SIP$. In Section \ref{Section:Comments Gaussian} we talk about linear prediction and some consequences for Gaussian processes. In Section \ref{section:Totally predictive} we study totally predictive sets and relate them to Riesz sets. In Section \ref{section: some examples} we give several examples of totally predictive sets. In Section \ref{section:squares are sparse} we show that there are totally predictive sets which are not return-time sets and remark about the total predictivity of the complements of certain very sparse sets. Finally in Section \ref{section:zd} we discuss predictivity for $\zd$ actions. 

\section*{Acknowledgements}
The first author has been funded by ISF grants 1702/17, 1570/17 and ISF-Moked grants 2095/15, 2919/19. We would like to thank Hillel Furstenberg, Eli Glasner, Jon Aaronson, Vitaly Bergelson and Mikhail Sodin for several discussions. We thank KV Shuddhodhan for explaining some number theoretic facts to us and Elon Lindenstrauss for his suggestion which lead to the proof of Proposition \ref{prop:elon} and its corollary. We are very grateful to Herve Queffelec for introducing us to Riesz sets. Finally we would like to thank John Griesmer and the anonymous referee for a careful reading and many helpful comments.

\section{Notation}\label{Section: Notation}
In this note a \emph{process} will always mean a finite-valued (unless otherwise mentioned) stationary process. $\N$ will denote the natural numbers $\{1,2, \ldots\}$ and $\Z$ will denote the integers.

 For a process $(X_i)_{i\in \Z}$ and $P\subset \Z$, let $X_P$ denote the complete sigma-algebra generated by the collection of random variables $(X_i)_{i\in P}$. Abusing notation we will also use $X_\Z$ to denote the process itself. The entropy of a process can be computed by the formula
$$h(X_\Z):=H(X_0~|X_{\N}).$$
This motivates the following definition.
\begin{definition}
 A set $P\subset \N$ is \emph{predictive} if for all zero entropy processes, $X_\Z$, $X_0$ is measurable with respect to $X_{P}$.
\end{definition}

Though one ought to talk about prediction using the past, for notational convenience we will ``predict using the future'' keeping in mind that $(X_i)_{i\in \Z}$ is a zero entropy process if and only if $(X_{-i})_{i\in \Z}$ is as well.

The formula for the entropy of a process implies that $\N$ is a predictive set. Since restricting a zero entropy process to multiples of a natural number $k$ yields a zero entropy process, $k\N$ is also a predictive set. However it is not difficult to see that for natural numbers $r$ which are not a multiple of $k$, $k\N +r$ is not predictive. Let $\zeta_0, \zeta_1, \ldots, \zeta_{k-1}$ be independent random variables with $\zeta_i$ taking the value $-1$ or $1$ with probability $1/2$ each. Let $Z$ be the uniform random variable on $\{1, 2, \ldots, k\}$ independent of $\zeta_i$ for $0\leq i \leq k-1$. Now consider the process $X_\Z$ given by 
$$X_{kn+r}:= (Z\oplus r)\zeta_r\text{ for all }n\in \Z\text{ and }r\in \{0,1,\ldots, k-1\}$$
where $Z\oplus r:= s$ for $s\in \{1, 2, \ldots, k\}$ for which 
$Z+r\equiv s\mod k$. We have that for $r$ which is not a multiple of $k$, $H(X_0~|~X_{k\N+r})=\log (2)$ even though $X_\Z$ is a zero entropy process.

\begin{remark}
	In a similar fashion, one can prove that for all $k\in \N$, $P$ is predictive if and only $k P$ is also predictive. Suppose $P$ is predictive. Given a zero entropy process $X_\Z$ we know that $X_{k\Z}$ also has zero entropy. Applying the predictivity of $P$ to this new process we have that $H(X_0~|~X_{kP})=0$. 
	
	Now suppose that $kP$ is predictive and $X_\Z$ is a given zero entropy process whose state space does not contain $0$. Let $W$ be an independent uniform random variable on $\{0,1,2,\ldots, k-1\}$ and let $Y_\Z$ be the zero entropy process given by $Y_{i+kn}:=X_n1_{W}(i)$ for all $n\in \Z$ and $0\leq i \leq k-1$.
Since $kP$ is predictive, $H(Y_0~|~Y_{kP})=0$. Thus $H(Y_0~|~Y_{kP}, W=0)=0$. But $H(Y_0~|~Y_{kP}, W=0)=H(X_0~|~X_P)$.
Hence $P$ is predictive.
\end{remark}

\begin{remark}
	Predictive sets form a \emph{family}, meaning that if $P$ is predictive and $P\subset Q\subset \N$ then $Q$ is also predictive: Clearly $H(X_0~|~X_{Q})\leq H(X_0~|~X_{P})$ and hence if the latter is zero then the former is zero as well.
\end{remark}

\begin{remark}
	For most of the paper we will focus on finite-valued processes. One of the reasons is that for infinite-valued processes the correspondence between zero entropy and predictivity of $\N$ does not exist.
	
	Indeed take any process $X_\Z$ with state space $\{0,1\}$ and let $\{j_1, j_2, \ldots\}$ be an enumeration of $\Z$.  The process $Y_\Z$ with state space $[0,1]$ given by 
	$Y_i:= \sum_k \frac{1}{3^k}X_{i+j_k}$
	is isomorphic to $X_\Z$ and $X_\Z$ is measurable with respect to $Y_{1}$ and hence $Y_0$ is measurable with respect to $Y_\N$.
\end{remark}

\section{Predictivity of return-time sets}\label{section:return-time sets}

	We will assume through out that $X$ is a Polish space. A set $A\subset \N$ is \emph{a return-time set}  if there exists a probability preserving transformation (\emph{ppt}) $(X, \mu, T)$ and a set $U\subset X; \mu(U)>0$ such that 
	$$A=N(U,U):=\{n\in \N~:~ \mu(U\cap T^{-n}(U))>0\}.$$

One of the main results of this section is that return-time sets are predictive. We will need the following notation: 	Given a set $Q\subset \N$ let 
$$Q_n:=Q\cap\{1,2,\ldots, n\}.$$ The \emph{upper density} of $Q$ is defined as
$$\limsup_{n\to \infty}\frac{|Q_n|}{n}.$$

\begin{theorem}\label{thm:difference_set_predictive}
	Let $Q$ be a set with positive upper density. Then $(Q-Q)\cap \N$ is predictive.
\end{theorem}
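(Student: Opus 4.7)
My plan is to apply Furstenberg's correspondence principle to produce a return-time set contained in $(Q-Q)\cap\N$, and then invoke the companion theorem of this section that return-time sets are predictive together with the fact from the introductory remarks that predictive sets are closed under taking supersets.

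First I would build the Furstenberg system associated to $Q$. Let $S$ denote the shift on $Y:=\{0,1\}^{\Z}$ and fix a sequence of intervals $I_k\subset\N$ along which $|Q\cap I_k|/|I_k|\to\overline d(Q)>0$. Let $\nu$ be any weak-$*$ accumulation point of the empirical averages
$$\frac{1}{|I_k|}\sum_{i\in I_k}\delta_{S^i 1_Q},$$
and set $U:=\{\omega\in Y:\omega_0=1\}$. Then $(Y,\nu,S)$ is a ppt and $\nu(U)=\overline d(Q)>0$.

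Next I would identify the return-time set with differences from $Q$. For each $n\in\N$, the value $\nu(U\cap S^{-n}U)$ is the limit along the chosen subsequence of $|Q\cap(Q-n)\cap I_k|/|I_k|$, so $\nu(U\cap S^{-n}U)>0$ forces $Q\cap(Q-n)$ to be nonempty, i.e.\ $n\in Q-Q$. Hence $N(U,U)\subseteq (Q-Q)\cap\N$.

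Finally I would invoke the theorem of this section that every return-time set is predictive. This gives that $N(U,U)$ is predictive, and since any superset in $\N$ of a predictive set is predictive, the set $(Q-Q)\cap\N$ is predictive as well. The substantive difficulty is not in this reduction, which is a fairly standard use of Furstenberg's correspondence principle, but in the companion theorem that $N(U,U)$ is predictive for an arbitrary ppt $(Y,\nu,S)$ and positive-measure set $U$. That is where the zero-entropy hypothesis must really be exploited, presumably through a joining of the given zero-entropy process with $(Y,\nu,S)$ and an induced-map or disintegration argument along the orbit of $U$.
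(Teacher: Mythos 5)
Your reduction is correct as far as it goes: the Furstenberg correspondence construction is sound, $\nu$ is shift-invariant, $\nu(U)=\overline{d}(Q)>0$, and $\nu(U\cap S^{-n}U)>0$ does force $n\in Q-Q$, so $N(U,U)\subseteq (Q-Q)\cap\N$. The problem is that you have reduced the theorem to exactly the statement that the paper derives \emph{from} this theorem. In the paper, Corollary \ref{cor:return time predictive} (return-time sets are predictive) is proved by running your argument in reverse: given a return-time set $N(U,U)$, one passes to the symbolic factor, uses the Birkhoff ergodic theorem to find a point $y$ in the support whose set $Q$ of return times has positive density, observes $(Q-Q)\cap\N\subseteq N(U,U)$, and then invokes Theorem \ref{thm:difference_set_predictive}. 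So your proposal, combined with the paper's only proof of the companion result, is circular. You acknowledge that the real difficulty lives in the companion theorem and gesture at ``a joining of the given zero-entropy process with $(Y,\nu,S)$ and an induced-map or disintegration argument,'' but that is not carried out, and it is not the route the paper takes for arbitrary ppts; a joining argument of that kind is what the paper uses only in the more restrictive Proposition \ref{prop:zero_entropy_intersect}, where the ppt $(Y,\nu,S)$ is itself assumed to have zero entropy. For a general positive-entropy ppt that device does not obviously apply.

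The actual engine of the paper's proof is an elementary entropy computation that your proposal omits entirely. For a zero entropy process, $\frac1n H(X_{Q_n})\le\frac1n H(X_{\N_n})\to 0$; positive upper density gives a subsequence $n_k$ along which $\frac{1}{|Q_{n_k}|}H(X_{Q_{n_k}})\to 0$; and the chain rule plus stationarity yields
$$H(X_{Q_{n_k}})=\sum_{i}H\bigl(X_0\;\big|\;X_{q_{i+1}-q_i},\ldots,X_{q_{m_k}-q_i}\bigr)\ \ge\ |Q_{n_k}|\,H\bigl(X_0\;\big|\;X_{(Q-Q)\cap\N}\bigr),$$
forcing $H(X_0\,|\,X_{(Q-Q)\cap\N})=0$. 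Without this (or some genuinely independent proof that return-time sets of arbitrary ppts are predictive), your argument does not close.
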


\begin{proof}
	Let $Q\subset \N$ be a set of positive upper density. Then we have that 
	$$\frac{1}{n}H(X_{Q_n})\leq \frac{1}{n}H(X_{\N_n})$$
	proving that $\frac{1}{n}H(X_{Q_n})$ converges to zero as $n$ tends to infinity. Since the set $Q$ has positive upper density there exists an increasing sequence of natural numbers $n_k$ such that 
	$$\lim_{k \to \infty}\frac{|Q_{n_k}|}{{n_k}}>0\text{ and hence }\frac{1}{|Q_{n_k}|}H(X_{Q_{n_k}})= \frac{n_k}{|Q_{n_k}|}\frac{1}{{n_k}}H(X_{Q_{n_k}})$$
	converges to zero as $k\to \infty$. 
	Write $Q_{n_k}:=\{q_1, q_2, \ldots, q_{m_k}\}$ where (as follows) $m_k=|Q_{n_k}|$ and $q_1<q_2<\ldots<q_{m_k}$.
	By the chain-rule and the stationarity of the process we get that
	\begin{eqnarray*}
		H(X_{Q_{n_k}})&=& \sum_{i=1}^{m_k}H(X_{q_{i}}~|~X_{q_{i+1}},X_{q_{i+2}}, \ldots, X_{q_{m_k}})\\
		&=&\sum_{i=1}^{m_k}H(X_0~|~X_{q_{i+1}-q_{i}},X_{q_{i+2}-q_i}, \ldots, X_{q_{m_k}-q_i})\\
		&\geq&m_k H(X_0~|~X_{(Q-Q)\cap {\N}})	\end{eqnarray*}
	Thus $$\frac{1}{|Q_{n_k}|}H(X_{Q_{n_k}})\geq H(X_0~|~X_{(Q-Q)\cap {\N}}).$$
Since the left hand side converges to zero we get that $(Q-Q)\cap {\N}$ is a predictive set.
\end{proof}

Now we are ready to prove that return-time sets are predictive. 
\begin{corollary}\label{cor:return time predictive}
	Return-time sets are predictive.
\end{corollary}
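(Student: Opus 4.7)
The plan is to reduce Corollary \ref{cor:return time predictive} to the preceding Theorem \ref{thm:difference_set_predictive} by exhibiting, for each return-time set $N(U,U)$, a set $Q\subset\N$ of positive upper density with $(Q-Q)\cap\N\subseteq N(U,U)$. Since predictive sets form a family, and since $(Q-Q)\cap\N$ is predictive by the theorem, this would immediately yield the predictivity of $N(U,U)$.

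Given $(X,\mu,T)$ and $U$ with $\mu(U)>0$, I would first apply the Birkhoff ergodic theorem to $1_U$: for $\mu$-a.e.\ $x$ the visit set $Q(x):=\{n\in\N\colon T^n x\in U\}$ has density $\E[1_U\mid\mathcal{I}](x)$, where $\mathcal{I}$ is the $T$-invariant $\sigma$-algebra. Since this conditional expectation integrates to $\mu(U)>0$, it is strictly positive on a set of positive $\mu$-measure, supplying many candidates $x$ for which $Q(x)$ has positive upper density.

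The crux of the argument is to guarantee $(Q(x)-Q(x))\cap\N\subseteq N(U,U)$ for some such $x$. The naive attempt only shows that $U\cap T^{-n}(U)$ is nonempty (it contains $T^{q_2}x$), not that it has positive measure. To bypass this, I would introduce the null set
\[B:=\bigcup_{n\in\N\setminus N(U,U)}\ \bigcup_{k\in\N}T^{-k}\bigl(U\cap T^{-n}(U)\bigr),\]
which is a countable union of $\mu$-null sets since $n\notin N(U,U)$ means $\mu(U\cap T^{-n}(U))=0$ and $T$ preserves $\mu$. For any $x\notin B$, if $k$ and $k+n$ both belong to $Q(x)$ with $n\geq 1$ then $T^k x\in U\cap T^{-n}(U)$, which by the definition of $B$ forces $n\in N(U,U)$. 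Choosing $x$ outside $B$ and inside the positive-measure set on which $Q(x)$ has positive density, then applying Theorem \ref{thm:difference_set_predictive} to $Q(x)$, completes the proof. The main conceptual hurdle is precisely this pointwise-versus-positive-measure discrepancy; the null set $B$ neutralizes it in one stroke, after which the argument is a diagram chase.
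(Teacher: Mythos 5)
Your argument is correct. You make exactly the same reduction as the paper — pass to Theorem \ref{thm:difference_set_predictive} by producing a positive-upper-density set $Q$ with $(Q-Q)\cap\N\subseteq N(U,U)$, with Birkhoff supplying the density — and you correctly identify the one genuine obstacle, namely that $i,j\in Q(x)$ only shows $U\cap T^{-(j-i)}(U)$ is nonempty, not that it has positive measure. Where you diverge is in how that obstacle is cleared. The paper pushes the system forward to the symbolic factor $\phi(x)_i=1_U(T^ix)$ and picks the point $y$ from the \emph{topological support} of the image measure $\nu$: then $y_i=y_j=1$ forces the open cylinder $\{z: z_i=z_j=1\}$ to have positive $\nu$-measure, which is precisely $\nu([1]\cap T^{-|i-j|}[1])>0$. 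You instead stay in the original system and delete the explicit null set $B=\bigcup_{n\notin N(U,U)}\bigcup_{k}T^{-k}\bigl(U\cap T^{-n}(U)\bigr)$, which is null because each $U\cap T^{-n}(U)$ with $n\notin N(U,U)$ is null and $T$ preserves $\mu$. Both devices are standard and both work; yours is arguably more elementary in that it needs no symbolic coding and no topology on the factor (the paper's support argument is the reason it assumes $X$ Polish), while the paper's version packages the same information into a single well-chosen point $y$ and makes the statement $N(U,U)=N([1],[1])$ reusable elsewhere. Either way the conclusion $(Q-Q)\cap\N\subseteq N(U,U)$ follows and, since predictive sets form a family, the corollary is proved.
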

In the following proof, we will need the \emph{shift map} $\sigma: \{0,1\}^\Z\to \{0,1\}^\Z$ given by 
$(\sigma(x))_i:=x_{i+1}$. In addition we will denote by 
$$[1]:=\{x\in \{0,1\}^\Z~:~x_0=1\},$$
the cylinder set of elements of the shift space with $1$ at the origin.
\begin{proof}
We know from Theorem \ref{thm:difference_set_predictive} that the difference set of a set of positive upper density is predictive. So it is sufficient to prove that every return-time set $P$ contains  $(Q-Q)\cap \N$ for a set $Q$ of positive upper density.

Let $P$ be a return-time set. Then there exists a ppt $(X, \mu, T)$ and $U\subset X; \mu(U)>0$ such that $N(U, U)=P$. Consider the map $\phi:(X,T)\to (\{0,1\}^\Z,\sigma)$ given by
$$\phi(x)_i=1_U(T^i(x)).$$
Let $\nu$ be the push-forward of the measure $\mu$ and let $Y\subset \{0,1\}^\Z$ be the topological support of $\nu$. It follows that $N(U,U)=N([1],[1])$. By the Birkhoff ergodic theorem there exists $y\in Y$ such that
$$\lim_{n\to \infty}\frac{1}{n}\sum_{i=0}^{n-1}1_{[1]}(T^i(y))>0.$$
Thus the set $Q=\{i\in \N~:~y_i=1\}$ has positive density. On the other hand since $y$ is in the support of $\mu$ we have that if 
$y_i=y_j=1$ for some $i,j\in \N$ then $\nu(T^{-|i-j|}([1])\cap [1])>0$. Hence we get that $$(Q-Q)\cap \N\subset N([1], [1])= N(U,U).$$
This completes the proof.
 \end{proof}

\begin{remark}
In Proposition \ref{prop:return not predictive} we construct a (totally) predictive set which does not contain any return-time set. 
\end{remark}

\begin{remark}\label{remark: entropy_bound}
	The proof of Theorem \ref{thm:difference_set_predictive} can be broken up into two parts. In the first part we showed that if a set $Q$ has positive upper density $c>0$ then
	$$\liminf \frac{1}{|Q_n|}H(X_{Q_n})\leq \frac{1}{c} h(X_\Z)$$
	and in the second part we used the chain-rule for entropy to show that
	$$h(X_0~|~X_{(Q-Q)\cap {\N}})\leq \liminf \frac{1}{|Q_n|}H(X_{Q_n}).$$
	These inequalities hold for all processes and not just zero entropy ones. Thus we have in fact that if $P$ is a return-time set then there exists a constant $c>0$ such that
	$$H(X_0~|~X_{P})\leq \frac{1}{c}\  h(X_\Z)$$
	for all processes $X_\Z$.

\end{remark}
	\begin{question}
	Suppose $P$ is a predictive set. Does there exists $c>0$ such that for all processes $X_\Z$,
	$$H(X_0~|~X_{P})\leq c\  h(X_\Z)?$$
\end{question}

\begin{remark}
Given a set $Q\subset \N$ the \emph{sequence entropy }along $Q$ is calculated by 
$$h(X_Q):=\limsup_{n\to \infty} \frac{1}{|Q_n|}H(X_{Q_n}).$$
In \cite{MR0322136} Krug and Newton showed that there is a constant $K(Q)\in [0,\infty]$ such that 
$h(X_Q)= K(Q)h(X_\Z)$ where 
$0. \infty =0$ and $\infty .0$ is undefined. In view of Remark \ref{remark: entropy_bound} if $K(Q)<\infty$ then $(Q-Q)\cap \N$ is predictive. However it is not difficult to see that any such set must contain a return-time set and thus this does not give us any additional information. On the other hand if a process has discrete spectrum then $h(X_Q)=0$ for all infinite sets $Q\subset \N$ \cite{Kushnirenko_1967}. This implies that for any infinite set $Q$, $(Q-Q)\cap \N$ is predictive for processes with discrete spectrum.

\end{remark}

\begin{question}
	The ergodic decomposition shows that the family generated by return-time sets is the same as that generated by return-time sets for ergodic ppts. Is it the same as the return-time sets for zero entropy ppts?
\end{question}

By a construction of K\v{r}\'{\i}\v{z} \cite{MR932131} we know that there are return-time sets which do not contain the return-time sets for ppts with discrete spectrum (see also \cite{mccutcheon_three_1995}). The motivation for this question comes from the following proposition.

\begin{proposition}\label{prop:zero_entropy_intersect}
	If $P$ is predictive and $Q$ is a return-time set of a zero entropy process then $P\cap Q$ is predictive.
\end{proposition}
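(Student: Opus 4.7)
The plan is to exploit a masking coupling that transports the return-time structure of $Q$ into a form to which the predictivity of $P$ can be applied. Fix a zero-entropy process $X_\Z$ and realize $Q$ as $N(V,V)$ for some zero-entropy ppt $(Y,\nu,S)$ with $V\subset Y$ and $\nu(V)>0$. Set $Y_i := 1_V\circ S^i$; this is a $\{0,1\}$-valued zero-entropy stationary process. Couple $Y_\Z$ so that it is independent of $X_\Z$, adjoin a fresh symbol $\bot$ to the alphabet of $X_\Z$, and define the masked process
\[
\tilde X_i \;:=\; \begin{cases} X_i & \text{if } Y_i = 1,\\ \bot & \text{if } Y_i = 0.\end{cases}
\]
Since $\tilde X_\Z$ is a factor of the independent joining $(X_i,Y_i)_{i\in\Z}$, and the latter is zero entropy (entropy is additive for independent joinings), $\tilde X_\Z$ is a finite-valued stationary zero-entropy process.

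Applying the predictivity of $P$ to $\tilde X_\Z$ yields a measurable $f$ with $\tilde X_0 = f(\tilde X_P)$ almost surely. On the positive-probability event $\{Y_0 = 1\}$ this reads $X_0 = f(\tilde X_P)$. I then condition further on $Y_\Z = y$ with $y_0 = 1$, and set $S(y) := \{p \in P : y_p = 1\}$. The variable $\tilde X_P$ depends on $X$ only through $X_{S(y)}$, so by independence of $X_\Z$ and $Y_\Z$ the identity $X_0 = f(\tilde X_P)$ persists as an a.s.\ identity in $X$ for $\nu$-a.e.\ such $y$, exhibiting $X_0$ as a measurable function of $X_{S(y)}$ and hence giving $H(X_0 \mid X_{S(y)}) = 0$. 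A direct null-set argument on the sets $V \cap S^{-p}V$ with $p \notin Q$ shows that for $\nu$-a.e.\ $y \in V$ the set $\{p \ge 1 : y_p = 1\}$ lies in $Q$, so $S(y) \subset P \cap Q$. Selecting any single $y$ for which both conclusions hold yields
\[
H(X_0 \mid X_{P \cap Q}) \;\le\; H(X_0 \mid X_{S(y)}) \;=\; 0.
\]

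\textbf{Main obstacle.} The crux is the choice of the masked process $\tilde X_\Z$: this is the device that imports the return-time information carried by $Q$ into a zero-entropy process on which predictivity of $P$ bites. Once $\tilde X_\Z$ is in place, the rest is a careful bookkeeping of conditioning, using the independence of $X_\Z$ and $Y_\Z$ together with the classical observation that $\nu$-generic points of $V$ return to $V$ only along times in $Q$.
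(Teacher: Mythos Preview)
Your proof is correct and follows essentially the same strategy as the paper: the masked process $\tilde X_i$ is exactly the paper's $W_i(y,\omega)=X_i(\omega)1_U(T^i(y))$, with your fresh symbol $\bot$ playing the role of the excluded value $0$. The only cosmetic difference is in the final extraction step---the paper manipulates conditional entropies directly, using independence of $Y_\Z$ from $(X_0,X_{P\cap Q})$ to drop the conditioning on the indicators, whereas you invoke Fubini to select a single generic $y\in V$ with $S(y)\subset P\cap Q$ and $H(X_0\mid X_{S(y)})=0$; both routes encode the same idea.
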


\begin{proof}
Let $P$ be a predictive set, $(Y,\mathcal C, \mu, T)$ be a zero entropy ppt and $U \subset Y$ have positive measure. We will prove that $P\cap N(U, U)$ is still predictive. Let  $X_\Z(\omega)$ be a zero entropy process on the probability space $(\Omega,\mathcal B, \nu)$ and assume that $0$ is not in its state space. We now consider the zero entropy process $W_\Z$ defined on $(Y\times \Omega, \mathcal{C}\times \mathcal{B}, \mu\times \nu)$ and given by 
$$W_i(y,\omega):=X_i(\omega) 1_{U}(T^i(y)).$$
Since $P$ is predictive,
$$H(X_0 1_{U}(y)~|~ X_{i}1_{U}(T^{i}(y)); i \in P)=0$$
and thus
$$H(X_0 ~|~ X_{i}1_{U}(T^{i}(y)); i \in P,  y \in U)=0.$$
From this it follows that 
$$H(X_0 ~|~ X_{i}; i \in P\cap N(U,U),(1_{ U}(T^{i}(y)))_{i \in P}, y\in U)=0.$$
The distribution of $y$ and hence the distribution of $T^i(y)$ is independent of $X_\Z$. Thus
$$H(X_0~|~X_{P\cap N(U,U)})=0$$ proving that
$P\cap N(U,U)$ is predictive. 
\end{proof}
This motivates a further question  (see also Question \ref{question:filter or sip}).
\begin{question}\label{Question:Filter}
Do predictive sets form a \emph{filter}, that is, if $P$ and $Q$ are predictive then is $P\cap Q$ predictive as well? 
\end{question}
Notice that return-time sets do form a filter. Given a dynamical system $(X,T)$, an open set $U$ and a point $x\in X$ the \emph{visit-time sets} $N(x, U)$ are given by
$$N(x, U):=\{n\in \N~:~ T^n(x)\in U\}.$$ 
\begin{corollary} \label{corollary:predictive_rotation}
Let $P$ be a predictive set and $(G,R_\alpha)$ be a compact group rotation by $\alpha\in G$. For open sets $U\subset G$ and $x\in U$, $P \cap N(x,U)$ is predictive. 
\end{corollary}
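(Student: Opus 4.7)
The plan is to reduce the statement to Proposition \ref{prop:zero_entropy_intersect} by exhibiting a return-time set for a zero entropy ppt that is contained in $N(x,U)$, and then to invoke the fact that predictive sets form an upward-closed family. The natural candidate for the ppt is the rotation $(G, \mathrm{Haar}, R_\alpha)$ itself: compact group rotations have discrete spectrum and hence zero entropy, and every non-empty open subset of a compact group has positive Haar measure, so return-times of such open sets are genuine return-time sets.

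To construct the return-time set, I would use continuity of multiplication in $G$ to select a symmetric open neighborhood $V$ of the identity $e \in G$ with $xV \subseteq U$, and then a symmetric open neighborhood $W$ of $e$ with $WW \subseteq V$. Put $Q := N(W,W)$. For any $n \in Q$, the intersection $W \cap R_\alpha^{-n}(W) = W \cap W\alpha^{-n}$ is non-empty, so one can choose $g,h \in W$ with $g\alpha^n = h$; by symmetry, $\alpha^n = g^{-1}h \in W^{-1}W = WW \subseteq V$, and therefore $R_\alpha^n(x) = x\alpha^n \in xV \subseteq U$, giving $n \in N(x,U)$. Thus $Q \subseteq N(x,U)$.

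By Proposition \ref{prop:zero_entropy_intersect}, $P \cap Q$ is predictive; since $P \cap Q \subseteq P \cap N(x,U)$, the larger set $P \cap N(x,U)$ is predictive as well. The only mildly delicate step is the two-tier choice of neighborhoods, which is what forces the inclusion $N(W,W) \subseteq N(x,U)$; once that is set up the argument is essentially a direct appeal to Proposition \ref{prop:zero_entropy_intersect} together with the upward-closure of $\Pred$. I do not foresee any substantive obstacle beyond verifying that $(G, R_\alpha)$ indeed has zero entropy, which is standard.
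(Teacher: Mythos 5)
Your proof is correct and follows essentially the same route as the paper: reduce to Proposition \ref{prop:zero_entropy_intersect} by showing $N(x,U)$ contains a return-time set of the (zero entropy) rotation itself, via a symmetric neighbourhood $W$ of the identity with $N(W,W)\subset N(x,U)$. The only cosmetic difference is that the paper first translates to the case $x=e$ and uses a single symmetric $V$ with $V.V\subset U$, whereas you keep $x$ general and use a two-tier choice of neighbourhoods.
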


\begin{proof} 
	In view of Proposition \ref{prop:zero_entropy_intersect} it is enough to prove that $N(x, U)$ contains a return-time set of a zero entropy ppt. Consider rotation by $\alpha$ on a group $G$, $U$ an open set containing the identity $e$. Choose a symmetric open neighbourhood $V$ of $e$ (meaning $V=V^{-1}$) such that $V.V\subset U$. $V$ is a positive measure set for the Haar measure $\mu$ on $G$ and 
	$$ N(V,V)=\{n\in \N~:~\mu(\alpha^nV\cap V)>0\}=\{n \in \N~:~\alpha^n V\cap V\neq \emptyset\}.$$
A simple algebraic manipulation using the fact that $V$ is symmetric gives us that
$$N(V,V)\subset N(e, V.V)\subset N(e, U).$$ Since compact group rotations have zero entropy this proves that $P\cap N(e, U)$ is predictive. The corollary now follows from the additional observation that for any $x\in G$ and open sets $W\subset G; x\in W$, we have that $N(x, W)=N(e, x^{-1}(W))$ and $x^{-1}(W)$ is an open set containing the identity.
\end{proof}

We will write $\mathbb T:=\R/\Z$ to denote the circle. In view of this corollary it is natural to ask about visit-times for distal systems. Consider for instance the distal system: $T:\mathbb T^2\to \mathbb T^2$ given by $T(x, y):= (x+\alpha,y+2x+\alpha)$. The forward orbit of the origin $(0,0)$ is $(n\alpha,n^2 \alpha)_{n \in \N}$. 

\begin{question}
	Let $\alpha\in \mathbb T$ be irrational and $\epsilon>0$. Is the set $\{n\in \N~:~|n^2\alpha|<\epsilon\}$ predictive?
\end{question}

Now moving in a different direction let us see why return-time sets don't just predict $X_0$ but also $X_n$ for a set of times $n \in \Z$ which have bounded gaps. 
\begin{proposition}\label{proposition:bounded gaps}
Let $P$ be a return-time set. Then the set 
$$\{n\in \Z~:~ P+n\text{ contains a return-time set}\}$$
has bounded gaps.
\end{proposition}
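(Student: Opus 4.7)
Write $S$ for the set $\{n \in \Z : P + n \text{ contains a return-time set}\}$, and fix a ppt $(X, \mu, T)$ with $U \subset X$ of positive measure such that $P = N(U, U)$. The plan is to show $S \supset P \cup \{0\} \cup (-P)$: since $P$ is a return-time set it is predictive by Corollary \ref{cor:return time predictive}, and as stated in the introduction predictive sets have bounded gaps in $\N$, so $P \cup \{0\} \cup (-P)$ is syndetic in $\Z$ and the proposition follows.

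The inclusion $-P \subset S$ is immediate. For $n \in P$, set $V := U \cap T^{-n}U$, which has positive measure by the definition of $P$. Since $V \cap T^{-k}V \subset U \cap T^{-k-n}U$, every $k \in N(V,V)$ satisfies $k + n \in P$, so $N(V,V) \subset P - n$ and $-n \in S$.

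The inclusion $P \subset S$ is the heart of the argument. I would first replace $(X, \mu, T)$ by its product with an irrational circle rotation $(X \times \mathbb{T}, \mu \times \lambda, T \times R_\alpha)$, with $U$ replaced by $U \times \mathbb{T}$; this leaves $P$ unchanged but renders the ambient system aperiodic, so Rokhlin towers become available. Given $n \in P$, now set $V := U \cap T^n U$, of positive measure since $\mu(U \cap T^n U) = \mu(U \cap T^{-n}U) > 0$ by $T$-invariance. The inclusion $V \cap T^{-k}V \subset U \cap T^{n-k}U$ forces $|n - k| \in P \cup \{0\}$ for every $k \in N(V,V)$; in particular, for $k > n$ this reads $k - n \in P$, i.e., $N(V,V) \cap (n, \infty) \subset P + n$. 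To strip off the possibly obstructive tail $N(V,V) \cap [1, n]$, I invoke the Rokhlin tower lemma: for $N$ large and $\epsilon < \mu(V)/2$, pick $B$ with $B, TB, \ldots, T^{N-1}B$ pairwise disjoint and $\mu(\bigsqcup_{i=0}^{N-1} T^i B) > 1 - \epsilon$. Since $\mu(B) \le 1/N$, a pigeonhole on the upper levels supplies some $i^* \in [n, N-1]$ with $\mu(V \cap T^{i^*} B) > 0$. Setting $V' := V \cap T^{i^*} B$, disjointness of distinct tower levels yields $V' \cap T^{-k}V' \subset T^{i^*}B \cap T^{i^*-k}B = \emptyset$ for $1 \le k \le n$, so $N(V', V') \cap [1, n] = \emptyset$; combined with the monotonicity $N(V', V') \subset N(V, V)$, this gives $N(V', V') \subset P + n$, placing $n \in S$.

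The main obstacle is the positive direction $P \subset S$: the combinatorial inclusion $N(V,V) \cap (n, \infty) \subset P + n$ is cheap, but the Rokhlin-tower maneuver, together with the preliminary aperiodic extension that makes it legal, is the substantive step, used to push the return times of $V = U \cap T^n U$ below $n$ out of the way. The negative direction $-P \subset S$ needs no such excision because $N(V,V) \subset P - n$ holds there without tail issues.
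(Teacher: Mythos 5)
Your proof is correct, but the key step --- showing that for $n\in P$ the translate $P+n$ contains a return-time set --- is done quite differently from the paper. The paper takes the product of $(X,\mu,T)$ with the cyclic rotation on $\Z/2n\Z$ and uses $V:=(U\cap T^{-n}U)\times\{0\}$: every return time of $V$ is then automatically a multiple of $2n$, hence $\geq 2n>n$, so the small return times you excise with a Rokhlin tower never arise, and the single set $N(V,V)$ lands inside both $P+n$ and $P-n$ simultaneously. Your route --- aperiodic extension by an irrational rotation, Rokhlin tower of height $N>n$, restriction of $V=U\cap T^nU$ to one high level to kill $N(V,V)\cap[1,n]$ --- is a standard and valid alternative, and your pigeonhole on the upper levels (using $\mu(B)\leq 1/N$ and $\epsilon<\mu(V)/2$) is fine; but it costs more: you need the aperiodicity of the extension, the Lebesgue-space and invertibility hypotheses behind Rokhlin's lemma (if $T$ is not invertible you should also pass to the natural extension, which does not change $N(U,U)$), and a separate argument for $-P\subset S$. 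The finite cyclic extension buys all of this for free. For the final syndeticity step you invoke Corollary \ref{cor: bounded gaps predictive} (predictive sets have bounded gaps), which is proved later via Theorem \ref{thm:SIpstar} and Gaussian processes; this is not circular, but it is much heavier than necessary --- the paper closes the argument with a direct Poincar\'e-recurrence sweep-out showing that $N(U,U)$ itself has bounded gaps, and you could equally cite Khintchine's theorem.
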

\begin{proof}
Let $(X, \mu, T)$ be a ppt and $U\subset X$ be a set with positive measure. Let us first see that if $n\in N(U, U)$ then both $n+N(U,U)$ and $-n+N(U,U)$ contain a return-time set: Consider the transformation 
$$(X\times \Z/2n\Z, \mu\times \mu_H, T')$$
where $\mu_H$ is the uniform measure on $\Z/2n\Z$ and 
$$T'(x,y):=(T(x),y+1\mod 2n).$$
Let $V:=(U\cap T^{-n}(U))\times \{0\}$.
If $r\in N(V, V)$ then $r\geq 2n$ and 
$$\mu (U \cap T^{-n}(U)\cap T^{-r}(U)\cap T^{-n-r}(U))>0$$
which implies that both $\mu(U \cap T^{-n-r}(U))$ and $\mu(U \cap T^{-r+n})$ are positive.  Thus $N(V,V)$ is contained in both $-n+ N(U,U)$ and $n+N(U, U)$.

We are left to prove that the set $N(U, U)$ has bounded gaps. This is easy to see from the Poincar\'e recurrence theorem: Let $W:=\cup_{n\geq  0}T^{-n}(U)$ be the set swept by $U$. Choose $r\in\N$ large enough such that $\mu(W\setminus(\cup_{n=0}^rT^{-n}(U)))<\mu(U)/2$. Since $T$ is a ppt we have that
$$\mu(W\setminus(\cup_{n=k}^{r+k}T^{-n}(U)))<\mu(U)/2$$ for all $k\in \N$.

But if $\{n\in \N~:~\mu(U\cap T^{-n}(U))>0\}$ has a gap of length $t$ then $\mu(U)<\mu(W)-\mu(\cup_{n=0}^{t-1} T^{-n}(U))$ proving that $t\leq r$. Thus the set $N(U, U)$ has bounded gaps.
\end{proof}

\begin{corollary}
	Let $P$ be a return-time set. Then the set 
$$\{n\in \Z~:~(P+n)\cap \N\text{ is predictive}\}$$
has bounded gaps.
\end{corollary}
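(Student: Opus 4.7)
The plan is to show that the set described in the corollary contains the set from Proposition \ref{proposition:bounded gaps}, and then invoke that proposition together with the fact that predictive sets form an upward-closed family.

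First I would observe that return-time sets are by definition subsets of $\N$. Consequently, if for some $n\in \Z$ the translate $P+n$ contains a return-time set $R$, then $R\subset (P+n)\cap \N$ automatically, because $R\subset \N$. By Corollary \ref{cor:return time predictive}, $R$ is predictive, and since any superset (inside $\N$) of a predictive set is predictive, it follows that $(P+n)\cap \N$ is predictive.

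This shows the inclusion
\[
\{n\in \Z~:~P+n\text{ contains a return-time set}\}\ \subset\ \{n\in \Z~:~(P+n)\cap \N\text{ is predictive}\}.
\]
By Proposition \ref{proposition:bounded gaps} the left-hand set has bounded gaps, and any superset of a set with bounded gaps also has bounded gaps, so the right-hand set has bounded gaps as well.

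There is no real obstacle here — the work was already done in Proposition \ref{proposition:bounded gaps}; the only thing to verify is the trivial fact that return-time sets live in $\N$ so intersecting with $\N$ does not destroy the return-time subset witnessing predictivity. The proof is thus essentially a one-line combination of the preceding proposition, Corollary \ref{cor:return time predictive}, and the hereditary (family) property of predictive sets noted in the remark following the definition in Section \ref{Section: Notation}.
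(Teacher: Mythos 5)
Your proof is correct and follows essentially the same route as the paper: both reduce the statement to Proposition \ref{proposition:bounded gaps} and then upgrade ``contains a return-time set'' to ``is predictive'' via the predictivity of return-time sets and the upward-closure (family) property. The only cosmetic difference is that the paper cites Proposition \ref{prop:zero_entropy_intersect} at the final step where you more directly cite Corollary \ref{cor:return time predictive}; the substance is identical.
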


\begin{proof}
By Proposition \ref{proposition:bounded gaps} the set 
$$\{n\in \Z~:~ P+n\text{ contains a return-time set}\}$$
has bounded gaps. For such $n\in \Z$, since there are return-time sets $Q\subset P+n$ it follows from Proposition \ref{prop:zero_entropy_intersect} that 
$(P+n)\cap \N$ is predictive. 
	\end{proof}
The following question arises naturally.
\begin{question}
Let $P$ be a predictive set. Does the set 
$$\{n\in \Z~:~(P+n)\cap \N\text{ is predictive}\}$$
have bounded gaps?
\end{question}

As a final suggestion which one may pursue,  we consider positive definite sequences: A \emph{positive definite sequence}  is a sequence $a_{n}; {n\in \Z}$ of complex numbers such that for all sequences $b_n$ and natural numbers $N$, 
$$\sum_{n,m=1}^N a_{n-m}b_n\overline{b_m}\geq 0.$$

It is not difficult to see that for a ppt $(X, \mu, T)$ and a set $A\subset X$ of positive measure, the sequence $\mu(A\cap T^{-n}(A)); n \in \N$ is a positive definite sequence and the return-time set is just the support of the sequence in the natural numbers. Herglotz \cite[Page 90]{MR614142} proved that $a_n; n \in \Z$ is positive definite if and only if there is a finite non-negative measure $\mu$ on the circle $\mathbb T$ such that its Fourier coefficients
$$a_n=\hat{\mu}(n)=\int_{\mathbb T} e^{-2\pi i nt}d\mu(t).$$
The following question arises naturally.
\begin{question}\label{Question:positive definite}
Fix $\epsilon>0$ and a real valued positive definite sequence $a_n; n\in \Z$. Is the set 
$$\{n\in \N~:~a_n >-\epsilon\}$$
predictive?
\end{question}

\begin{remark}
Given a ppt $(X, \mu, T)$ and $A\subset X$ of positive measure and the positive-definite sequence $a_n:=\mu(A\cap T^{-n}(A))-\mu(A)^2$ we are just asking whether the set
$$\{n\in \N~:~\mu(A\cap T^{-n}(A))>\mu(A)^2-\epsilon\}$$
is predictive. Khintchine's theorem \cite[Page 22]{MR614142} shows that the set has bounded gaps.
\end{remark}

\begin{remark}
	We cannot take $\epsilon=0$ in this question: Let $\mu$ be the Lebesgue measure supported on $[-1/4, 1/4]\subset \mathbb T$. The sequence 
$$a_n:=\int_{\mathbb T} e^{2\pi i n x} d\mu(x)= \frac{1}{\pi n} sin(\pi n/2) .$$
is positive definite but its support is the set of odd numbers which is not predictive.
\end{remark}
While we do not know the answer to Question \ref{Question:positive definite} we will indicate a result in favour of these sets being predictive in Proposition \ref{prop:bounded gaps of positive definite sequecnes}. However we cannot hope to use Corollary \ref{cor:return time predictive} to answer Question \ref{Question:positive definite} since a construction by Forrest shows that these sets need not contain return-time sets \cite{MR1177341} (see also \cite{mccutcheon_three_1995}).

\section{Predictive sets are $\SIP$}\label{Section:SIP}

 In the last section we gave sufficient condition for a set to be predictive. In this section, we will investigate necessary conditions for predictivity. We will show that predictive sets have bounded gaps.

To prove this we will show something stronger which we now describe. Given a sequence of natural numbers $s_i;i \in \N$ we denote by 
$$\SIPp(s_1, s_2, \ldots):=\{\sum_{i\in \N}\epsilon_i s_i~:~ \epsilon_i=0,1,-1\}\cap \N,$$ 
the \emph{symmetric infinite dimensional parallelepiped set (SIP)} generated by it. 

A \emph{$\SIP$} set is a subset of the natural numbers which intersects every SIP set non-trivially. 

\begin{theorem}\label{thm:SIpstar}
	Predictive sets are $\SIP$. 
\end{theorem}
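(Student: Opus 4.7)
Proof plan (contrapositive). Given $P\subset \N$ that is not $\SIP$, I will construct a finite-valued zero entropy stationary process in which $X_0$ is not measurable with respect to $(X_{-p})_{p\in P}$.

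Fix a sequence $(s_i)_{i\in\N}$ of naturals with $P\cap\SIPp(s_1,s_2,\ldots)=\emptyset$; passing to a subsequence only shrinks the generated $\SIPp$, so I may assume $s_{i+1}\geq 3s_i$. Form the classical Riesz product
$$\mu:=\prod_{i=1}^\infty\bigl(1+\cos(2\pi s_i x)\bigr).$$
By a theorem of Zygmund, this defines a continuous singular probability measure on $\mathbb T=\R/\Z$ whose Fourier coefficients $\hat\mu(n)$ vanish off the symmetric set $\{0\}\cup\SIPp(s_1,s_2,\ldots)\cup(-\SIPp(s_1,s_2,\ldots))$; in particular $\hat\mu(p)=0$ for every $p\in P$.

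Let $(X_n)_{n\in\Z}$ be the real, centred, stationary Gaussian process with covariance $\E[X_0 X_n]=\hat\mu(n)$ (existence from Bochner's theorem, reality from $\hat\mu(-n)=\hat\mu(n)$). By the Maruyama--Rokhlin theorem, singularity of $\mu$ forces the Kolmogorov--Sinai entropy of $(X_n)$ to be zero. At the same time $\E[X_0 X_{-p}]=\hat\mu(p)=0$ for every $p\in P$, and since the $X_n$ are jointly Gaussian, being pairwise uncorrelated with $X_0$ upgrades to the whole family $(X_{-p})_{p\in P}$ being \emph{independent} of $X_0$ (the events in finitely many coordinates form a $\pi$-system generating the full $\sigma$-algebra).

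Finally, set $Y_n:=\mathbf 1_{[0,\infty)}(X_n)$. Then $(Y_n)_{n\in\Z}$ is a $\{0,1\}$-valued stationary process; as a factor of a zero entropy process it has zero entropy, and $Y_0$ is uniform on $\{0,1\}$ because $X_0$ is continuously distributed. The independence of $X_0$ from $(X_{-p})_{p\in P}$ descends to $Y_0$ and $(Y_{-p})_{p\in P}$, so $H(Y_0~|~Y_{-p};\,p\in P)=\log 2$, and $P$ is not predictive. The main delicacy is upgrading the $L^2$-information furnished by the Riesz product (vanishing of Fourier coefficients along $P$) into a statement about conditional entropy for a \emph{finite-valued} process; the two levers that make this work are the Gaussian identity ``uncorrelated equals independent'' and the fact that a finite-valued factor of a zero entropy process is again zero entropy.
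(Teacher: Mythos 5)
Your argument is essentially the paper's own proof of Theorem \ref{thm:SIpstar}: Proposition \ref{Proposition: Independence_of_SIP} constructs exactly this Riesz-product singular measure and the associated zero entropy Gaussian process, and the remark following the theorem's proof carries out your final reduction to a two-valued process via $Y_i:=\mathrm{sign}(X_i)$. One small repair: the generating sequence of an SIP need not be unbounded, so a $3$-lacunary subsequence may not exist; instead, as the paper does, set $r_n:=\sum_{i\in I_n}s_i$ over disjoint finite index blocks $I_n$ chosen so that $r_{n+1}>3r_n$, and use that $\SIPp(r_1,r_2,\ldots)\subset\SIPp(s_1,s_2,\ldots)$.
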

\begin{remark}
Infinite dimensional parallelepiped sets or IP sets are like SIP sets except that $\epsilon_i$ are either $0$ or $1$. We will show in Theorem \ref{thm:some totally predictive sets} that predictive sets need not be $\text{IP}^\star$, that is, the complement of IP sets can be predictive.
\end{remark}

It is well-known that $\SIP$ sets have bounded gaps. Thus if we prove this theorem we will have proved that predictive sets have bounded gaps. We give a proof of this corollary for completeness.

\begin{corollary} Predictive sets have bounded gaps.\label{cor: bounded gaps predictive}
\end{corollary}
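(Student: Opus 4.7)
The plan is to deduce the corollary directly from Theorem \ref{thm:SIpstar}: any set that fails to have bounded gaps admits an infinite sequence whose associated $\SIPp$ misses it, and hence is not $\SIP$.

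Concretely, suppose $P \subset \N$ has unbounded gaps; I want to construct inductively a strictly increasing sequence $s_1 < s_2 < \ldots$ of positive integers such that every positive element of $\SIPp(s_1, s_2, \ldots)$ avoids $P$. At stage $k$, writing $M_k := s_1 + s_2 + \cdots + s_k$, I will pick $s_{k+1}$ much larger than $M_k$, lying in the middle of a gap of $P$ of length at least $2M_k + 1$ (such a gap exists by the unbounded-gaps assumption). Then every sum of the form $s_{k+1} + \sum_{i=1}^k \epsilon_i s_i$ with $\epsilon_i \in \{-1,0,1\}$ lands in the interval $[s_{k+1} - M_k,\, s_{k+1} + M_k]$, which lies inside the chosen gap and therefore misses $P$. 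Since $s_{k+1} > M_k$, any expression of the form $-s_{k+1} + \sum_{i=1}^k \epsilon_i s_i$ is non-positive and so contributes nothing to $\SIPp \cap \N$.

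To see that the full $\SIPp(s_1, s_2, \ldots) \cap \N$ avoids $P$, note that any positive element is a finite signed sum, so there is a largest index $j$ with $\epsilon_j \neq 0$; by possibly flipping the sign (which is allowed for $\SIPp$) we may assume $\epsilon_j = 1$, and then the argument from the inductive step shows this sum is not in $P$. Combined with Theorem \ref{thm:SIpstar}, which says that every predictive set intersects every such $\SIPp$ non-trivially, this yields the contradiction and therefore proves that predictive sets must have bounded gaps.

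The only slightly delicate point is ensuring that signed sums with a negative leading coefficient do not accidentally produce positive elements of $\SIPp$ that could land in $P$; this is handled by the rapid-growth choice $s_{k+1} > M_k$, which makes the ``highest index'' term dominate in absolute value and lets one reduce every positive representation to one with positive leading coefficient. The rest is bookkeeping.
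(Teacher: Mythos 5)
Your proof is correct and follows essentially the same route as the paper: assume unbounded gaps, build an SIP inside the complement by placing each new generator $s_{k+1}$ in the middle of a gap of length exceeding $2(s_1+\cdots+s_k)$, and invoke Theorem \ref{thm:SIpstar}. The only cosmetic remark is that the ``flip the sign'' step is unnecessary, since (as you already note) a signed sum with leading coefficient $-1$ is non-positive and so never occurs as a positive element of the SIP.
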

Given a finite sequence of natural numbers $s_1, s_2, \ldots, s_n$ we write 
$$\SIPp(s_1, s_2, \ldots, s_n):=\{\sum_{i=1}^n\epsilon_i s_i~:~ \epsilon_i=0,1,-1\}\cap \N.$$ 
\begin{proof}
	Let $N\subset \N$ be a set with unbounded gaps. We will construct an SIP set contained in $\N\setminus N$. Since predictive sets are $\SIP$, this will complete the proof.
	
	Choose $s_1\in \N\setminus N$ and fix $S_1:=\{s_1\}$. Since $N$ does not have bounded gaps, $\N\setminus N$ has infinitely many intervals of length greater than $2 s_1$. Thus there exists $s_2>s_1$ such that $s_2, s_2-s_1, s_2+s_1\in \N\setminus N$ and hence
	$S_2:=\SIPp(s_1, s_2)\subset \N\setminus N$. Again, there exists $s_3>s_1+ s_2$ such that 
	$$\{s_3\}, s_3-S_2, s_3+S_2\subset \N\setminus N$$
	giving us that $S_3:=\SIPp(s_1, s_2, s_3)\subset \N\setminus N$. Continuing in this manner we get a sequence of natural numbers $s_i; i \in \N$ such that
	$\SIPp(s_1, s_2, \ldots )\subset \N\setminus N$.
\end{proof}

To prove Theorem \ref{thm:SIpstar} it will be simpler to pass onto infinite-valued stationary processes. We will make the assumption that our processes take values in a separable Borel space $R$. Note that now by the entropy of a process $X_\Z$ we mean the Kolmogorov entropy of the measure preserving transformation defined by the process. This entropy is defined to be the supremum of the average Shannon entropy of the finite-valued processes $X^\alpha_\Z$  that are obtained by composing $X_\Z$ with finite-valued measurable mappings $\pi:R\to C_\alpha$ with $C_\alpha$ being a finite set. This can be finite even if the Shannon entropy of $X_0$ is infinite. For example consider the constant process $X_\Z$ with state space $\N$ for any probability measure on $\N$ with infinite Shannon entropy.

\begin{proposition}\label{Proposition:infinite_valued_predictive}
	$P$ is a predictive set if and only if for all zero entropy stationary processes (possibly infinite-valued) $X_\Z$, $X_0$ is measurable with respect to $X_P$.
\end{proposition}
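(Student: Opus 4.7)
\smallskip

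\textbf{Proof proposal.} The ``if'' direction is immediate: finite-valued processes are a special case of the processes appearing on the right hand side, so the right hand side trivially implies the definition of a predictive set. The plan is therefore to establish the ``only if'' direction by reducing an arbitrary infinite-valued zero-entropy process to its countable family of finite-valued factors.

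Assume $P$ is predictive and let $X_\Z$ be a (possibly infinite-valued) zero-entropy stationary process taking values in a separable Borel space $R$. Since $R$ is standard (and in particular countably generated as a measurable space), fix a countable family $\{A_n\}_{n\in \N}$ of measurable subsets of $R$ that generates the Borel $\sigma$-algebra of $R$. For each $n$, let $\pi_n:R\to \{0,1\}$ be the indicator of $A_n$ and consider the $\{0,1\}$-valued stationary process $Y^{(n)}_\Z$ defined by $Y^{(n)}_i := \pi_n(X_i)$. By the definition of the Kolmogorov entropy of $X_\Z$ recalled in the paragraph preceding the proposition (as the supremum of the Shannon entropies of its finite-valued measurable factors), each $Y^{(n)}_\Z$ has zero entropy.

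Now I apply predictivity of $P$ to each finite-valued process $Y^{(n)}_\Z$: for every $n \in \N$, the random variable $Y^{(n)}_0 = 1_{A_n}(X_0)$ is measurable with respect to the $\sigma$-algebra $Y^{(n)}_P$, which is contained in $X_P$. Thus $1_{A_n}(X_0)$ is $X_P$-measurable for every $n$. Since $\{A_n\}_{n\in \N}$ generates the Borel $\sigma$-algebra on $R$, the map $X_0$ itself is $X_P$-measurable, which is what we wanted.

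The argument is essentially bookkeeping and the only place one has to be careful is in the very first step, where one uses both that $R$ is countably generated as a measurable space and that the definition of entropy adopted for infinite-valued processes forces every finite-valued factor to inherit zero entropy. Neither is a genuine obstacle, so I do not expect any serious difficulty; the slightly delicate point, if any, is making sure the sub-$\sigma$-algebra generated by the $Y^{(n)}_i$ for $i \in P$ is really a sub-$\sigma$-algebra of the one generated by $X_i$ for $i \in P$, which is clear because $Y^{(n)}_i$ is a fixed measurable function of $X_i$.
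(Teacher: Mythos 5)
Your proposal is correct and follows essentially the same route as the paper: both reduce to a countable generating family of finite-valued factor processes (the paper uses finite partitions $\mathcal B_i$ of $R$, you use two-set partitions $\{A_n, A_n^c\}$), note that each such factor inherits zero entropy from the definition of Kolmogorov entropy as a supremum over finite-valued factors, apply predictivity of $P$ to each, and conclude by the generating property. No gaps.
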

\begin{proof}Let $P$ be a predictive set and $X_\Z$ be a zero entropy stationary process taking values in the separable Borel space $R$. Let $\mathcal B_i; i \in \N$ be a sequence of partitions which together generate the sigma algebra of $R$. Consider the factor $\pi_i: R^\Z\to (\mathcal B_i)^\Z$ given by $(\pi_i(X_\Z))_n:= B$ if $X_n\in B\in \mathcal B_i$. But $\pi_i(X_\Z)$ is a finite-valued zero entropy process. Thus $(\pi_i(X_\Z))_0$ is measurable with respect to $(\pi_i(X_\Z))_P$ and hence with respect to $X_P$. Since $\mathcal B_i; i \in \N$ together generate the sigma algebra of $R$ we have that
$X_0$ is measurable with respect to $X_P$.
\end{proof}

We will now need to recall some properties of Gaussian processes for our purposes. A \emph{Gaussian process} is a complex-valued stationary process $X_\Z$ such that $X_{n}, X_{n+1}, \ldots, X_m$ are jointly Gaussian for all $n<m$. We shall assume throughout that $X_n$ are zero mean for all $n \in \Z$. Since jointly distributed Gaussian random variables are determined by their covariance, Gaussian processes are determined by their autocorrelation sequence: $\mathbb E(X_0\overline{X_n}); n\in \N$ and thereby the corresponding spectral measure $\mu$ on $\mathbb T$ for which the Fourier coefficients are given by 
$$\hat{\mu}(n)= \mathbb{E}(X_0\overline{X_n}).$$
In the same vein every finite non-negative measure $\mu$ on $\mathbb{T}$ uniquely determines a Gaussian process with spectral measure $\mu$.
Thus the properties of the Gaussian process can be determined by its spectral measure and vice versa. We state few properties of Gaussian processes $X_\Z$:
\begin{enumerate}[(i)]
	\item A Gaussian process is ergodic if and only if the spectral measure is continuous if and only if it is weak mixing.  \cite[{Page 191}]{MR832433} 
	\item A Gaussian process has zero entropy if and only if the spectral measure is singular. \cite{MR1231420}. 
	\item $\mathbb E(X_0~|~ X_\N)=X_0$ if and only if the spectral measure does not have an integrable log-density or is singular. This follows from the well-known theorems of Szeg\"{o} \cite{szego} and Verblunsky \cite{MR1575824} (also \cite[Theorem 3]{MR2956573} and \cite[Chapter 2]{MR2743058}). We will revisit this theorem later in Section \ref{Section:Comments Gaussian}.
\end{enumerate}
The proof of Theorem \ref{thm:SIpstar} will follow from the following stronger proposition.
\begin{proposition}\label{Proposition: Independence_of_SIP}
	Let $S$ be an SIP set. Then there exist a zero entropy weakly mixing Gaussian process $X_\Z$ such that $X_0$ is independent of $X_{\N\setminus S}$.
\end{proposition}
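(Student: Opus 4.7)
My plan is to realise the required process as a Gaussian one whose spectral measure is a classical Riesz product whose Fourier support sits inside $S$. The first step is a reduction to a problem about measures on the circle. Using properties (i) and (ii) of Gaussian processes stated just above, a mean-zero Gaussian $X_\Z$ with spectral measure $\mu$ is weakly mixing iff $\mu$ is continuous and has zero entropy iff $\mu$ is singular. Jointly Gaussian variables are independent iff uncorrelated, so $X_0$ is independent of $(X_n)_{n\in\N\setminus S}$ precisely when $\hat\mu(n)=\E(X_0\overline{X_n})=0$ for every $n\in\N\setminus S$. Thus the proposition reduces to constructing a continuous singular probability measure on $\mathbb T$ whose Fourier transform is supported in $(-S)\cup\{0\}\cup S$.

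To build such a $\mu$, I would write $S=\SIPp(p_1,p_2,\ldots)$ and first thin the generators to a Hadamard-lacunary subsequence, say $p_{j+1}\geq 3\sum_{i\leq j} p_i$ (which costs nothing since $\SIPp$ of a subsequence of the original $p_i$'s is still contained in $S$). Hadamard lacunarity provides the key uniqueness property: every nonzero integer of the form $\sum_j\epsilon_j p_j$ with $\epsilon_j\in\{-1,0,1\}$ admits exactly one such representation. Now set
\[ \mu:=\prod_{j=1}^\infty\left(1+\tfrac12\cos(2\pi p_j\theta)\right), \]
interpreted as the weak-$\ast$ limit of its partial products. Each factor is a nonnegative trigonometric polynomial of integral $1$, so $\mu$ is a probability measure; expanding the product formally and using the uniqueness property shows that $\hat\mu$ vanishes outside the signed-sum set and hence outside $(-S)\cup\{0\}\cup S$, with explicit values $\hat\mu(\sum_j \epsilon_j p_j)=\prod_{\epsilon_j\neq 0}\tfrac14$.

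The remaining task is to verify that this $\mu$ is continuous and singular, and this is where I expect the only real work to sit, though it is entirely classical. Zygmund's dichotomy for Riesz products gives singularity of $\mu$ because $\sum_j (1/2)^2=\infty$; and $\mu$ is atomless by Wiener's lemma, since $\hat\mu$ is supported on a set of density zero in $\Z$ and hence $\tfrac{1}{2N+1}\sum_{|n|\leq N}|\hat\mu(n)|^2\to 0$. The Gaussian process with this spectral measure is then the required zero entropy weakly mixing process in which $X_0$ is independent of $(X_n)_{n\in\N\setminus S}$. There is no genuine combinatorial obstacle: since $S$ is given as an SIP, the needed Fourier-support property is built into the Riesz product by construction, and the only care is in citing (or recalling) the Zygmund singularity dichotomy together with the continuity via Wiener.
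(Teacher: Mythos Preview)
Your approach is essentially the same as the paper's: reduce to constructing a continuous singular probability measure on $\mathbb T$ whose Fourier transform is supported in $(-S)\cup\{0\}\cup S$, and build this measure as a Riesz product over a lacunary sequence sitting inside the SIP structure. The paper uses the factors $(1+\cos(2\pi r_m x))$ and simply cites Zygmund for singularity and continuity, whereas you use $(1+\tfrac12\cos(2\pi p_j\theta))$ and invoke Zygmund's dichotomy and Wiener's lemma; these are cosmetic differences.

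There is, however, one genuine slip in your reduction. You write ``thin the generators to a Hadamard-lacunary subsequence'', but the generators $p_i$ of an SIP need not be unbounded, so such a subsequence may not exist: take for instance $S=\SIPp(1,1,1,\ldots)=\N$, where every subsequence of generators is constantly $1$. The paper's remedy is to pass not to a subsequence but to \emph{sums over disjoint index sets}: choose pairwise disjoint finite $I_n\subset\N$ and set $r_n:=\sum_{i\in I_n}s_i$, selecting the $I_n$ inductively so that $r_{n+1}>3r_n$. This is always possible (make each $I_n$ large enough), and one still has $\SIPp(r_1,r_2,\ldots)\subset\SIPp(s_1,s_2,\ldots)$ because any signed sum of the $r_n$'s is a signed sum of the $s_i$'s with coefficients in $\{-1,0,1\}$ (disjointness of the $I_n$ is exactly what guarantees this). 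With this fix in place your argument goes through as written.
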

\begin{remark}
	In Section \ref{Section: Notation} we constructed a zero entropy process where $X_{k\N+r}$ is independent of $X_0$ for $r\notin k\Z$ but the process was not weak mixing. This construction will give an example which is weak mixing.
\end{remark}

\begin{proof}
Fix a sequence of natural numbers $s_i; i \in \N$. Consider a sequence of mutually disjoint sets of indices $I_n$ such that
$$r_n:=\sum_{i\in I_n}s_i$$
satisfies $r_{n+1}>3r_n$. Clearly $\SIPp(r_1, r_2, \ldots)\subset \SIPp(s_1, s_2, \ldots)$.

Let $\mu_l$ denote the Lebesgue measure on $\mathbb T$. The Riesz product $f_n:\mathbb T\to [0, \infty)$ is given by
$$f_n(x):=\prod_{m=1}^n(1+ \cos(2\pi r_m x)).$$
The corresponding probability measures with densities $f_n$ converge weak-star to a singular continuous probability measure $\mu$ on $\mathbb{T}$ \cite[Pages 208-211]{MR1963498}. Further, the Fourier coefficients of $f_n$ are supported on $\SIPp(r_1, r_2, \ldots, r_n)$. Therefore the Fourier coefficients of $\mu$ are supported on $\SIPp(r_1, r_2,\ldots)$.

Let $X_\Z$ be the Gaussian process with spectral measure $\mu$. Then $X_\Z$ is weak mixing, has zero entropy and further
$$\mathbb E(X_0\overline{X_n})=\int_\mathbb{T} e^{-2\pi i nx}d\mu(x)$$
is non-zero if and only if $n\in \SIPp(r_1, r_2, \ldots)$.
Thus we have that $X_0$ is independent of $X_n$ if and only if $n\notin \SIPp(r_1, r_2, \ldots)$. 
\end{proof}

\begin{proof}[Proof of Theorem \ref{thm:SIpstar}]
Let $P$ be a predictive set and $Q$ be an SIP. By Proposition \ref{Proposition: Independence_of_SIP} there exists a zero entropy Gaussian process $X_\Z$ such that $X_0$ is independent of $X_{\N\setminus Q}$. By Proposition \ref{Proposition:infinite_valued_predictive} we have that $X_0$ is measurable with respect to $X_P$. Thus $P\cap Q\neq \emptyset$.
\end{proof}

\begin{remark}
The use of Proposition \ref{Proposition:infinite_valued_predictive} above can be circumvented. More directly, let $Y_\Z$ be the process given by $Y_i:=sign(X_i)$. The process $Y_\Z$  is a zero entropy weakly mixing two-valued process for which $Y_0$ is independent of $Y_{\N\setminus Q}$. Thus if $P$ and $Q$ are disjoint then $P$ is not predictive even for two-valued processes.
\end{remark}

\begin{question}\label{question:sip_predictive}
	Are all $\SIP$ sets predictive? 
\end{question}

In \cite{MR3499490} Glasner and Akin constructed two $\SIP$ sets which do not intersect each other. In fact they showed that for any irrational $\alpha\in \mathbb T$ and $0<\epsilon<1/2$, the set $\{n~:~n\alpha\in(0,\epsilon)\}$ is $\SIP$. For the reader's convenience, we outline their proof.

 Given an infinite sequence of natural numbers $s_i; i \in \N$ there exist a strictly increasing sequence $r_i; i \in \N$ such that 
$$SIP^+(r_1, r_2,...)\subset SIP^+(s_1, s_2,...).$$
 By compactness of $\mathbb T$ there exists $i<j<j'<k$ such that $r_k-r_{j'}>r_j-r_i$,
$$ (r_j-r_i)\alpha\in (-\epsilon, \epsilon)\text{ and }|(r_k-r_{j'})\alpha|< |(r_j-r_i)\alpha|.$$  
Now if neither $(r_k-r_{j'})\alpha$ nor $(r_j -r_{i})\alpha$ are  elements of $(0, \epsilon)$ then they must be elements of $(-\epsilon, 0)$ and it follows that $((r_k-r_{j'})-(r_j-r_i))\alpha \in (0, \epsilon)$. This shows that $\{n~:~n\alpha\in (0, \epsilon)\}$ is $\SIP$. 

\begin{question}\label{question:filter or sip}
Let $\alpha\in \mathbb T$ be irrational and $1/2>\epsilon>0$. Is the set $\{n~:~n\alpha\in (0, \epsilon)\}$ predictive?	
\end{question}

Answering this question either way will have interesting consequences. If the answer is an affirmative then this would show that predictive sets don't form a filter answering Question \ref{Question:Filter}. If the answer is negative it would give us an example of an $\SIP$ set which is not predictive answering Question \ref{question:sip_predictive}.

Let $\Pred$ denote the set of predictive sets and $$\Pred^*:=\{Q\subset \N~:~Q\cap P\neq \emptyset\text{ for all }P\in \Pred\}.$$
\begin{question}
Let $Q\in \Pred^*$. Does there exists a zero entropy process $X_\Z$ such that $X_0$ is independent of $X_i$ for $i\in \N\setminus Q$?
\end{question}

To arrive at the necessary conditions for a set to be predictive we studied weak mixing Gaussian processes. This naturally gives rise to the following questions.
\begin{question} 		
	\begin{enumerate}[(i)]
		\item Is it true that $P$ is predictive if and only if for all zero entropy Gaussian processes $X_\Z$, $X_0$ is measurable with respect to $X_P$?
		\item Suppose $P\subset \N$ is such that for all ergodic/weak mixing zero entropy processes  $X_\Z$, $X_0$ is measurable with respect to $X_P$. Is $P$ a predictive set?
	\end{enumerate}
\end{question}

A topological dynamical system $(X, T)$ is called \emph{minimal} if the only closed invariant subsets of $X$ are the empty set and the entire set $X$. If $G$ is a compact group and $\alpha \in G$ then the rotation $(G, R_\alpha)$ is minimal if and only if the set $\{\alpha^n~:~n \in \N\}$ is dense. As a consequence of Corollary \ref{corollary:predictive_rotation} we have that if $x\in U \subset G$ where $U$ is an open set then $N(x, U)$ is a predictive set. Further by Corollary \ref{cor: bounded gaps predictive} we have that any predictive set must has bounded gaps. It is easy to see that if $(X, T)$ is a minimal system, $ U \subset X$ and $x\in X$ where $U$ is an open set then the set $N(x, U)$ has bounded gaps. It is now a natural question whether all such visit times $N(x, U)$ are predictive. We will now use the fact that predictive sets are $\SIP$ to see that this is not true.
\begin{proposition}\label{Proposition:minimal_not_predictive}
There exists a minimal system $(X, T)$, open set $U \subset X$ and $x\in U$  such that $N(x, U)$ is not a predictive set.
	\end{proposition}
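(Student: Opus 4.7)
The plan is to leverage Theorem \ref{thm:SIpstar}: since every predictive set is $\SIP$, it is enough to exhibit a minimal system $(X,T)$, an open set $U \subset X$, and a point $x \in U$ such that $N(x,U)$ misses some SIP set $S$. I would try to realise $N(x,U)$ inside the complement of an extremely sparse SIP set by working in a suitable Toeplitz subshift.

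Concretely, first fix a rapidly growing sequence $s_1 < s_2 < \cdots$ in $\N$, for instance with $s_{k+1} > 2^{s_1 + \cdots + s_k}$, and let $S = \SIPp(s_1, s_2, \ldots) \cap \N$. The growth condition ensures $S$ is very sparse: $|S \cap [1,N]|$ grows very slowly in $N$ and $S$ contains no long blocks of consecutive integers, so $\N \setminus S$ certainly carries syndetic subsets.

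Next, I would construct a Toeplitz sequence $\omega \in \{0,1\}^\Z$ with $\omega_0 = 1$, $\omega_n = 0$ for every $n \in S$, and whose set of $1$'s is syndetic. This is built level by level along a chain of periods $p_1 \mid p_2 \mid \cdots$ growing fast enough relative to the $s_k$. At stage $k$ one fills in the values of $\omega$ along a previously unassigned residue class modulo $p_k$, choosing the pattern so that every position in $S$ meeting this class receives the value $0$ while sufficiently many positions still receive $1$. Sparseness of $S$ is precisely what makes the three requirements (Toeplitz periodicity at every position; no $1$ at any $n \in S$; enough $1$'s for syndeticity) simultaneously satisfiable. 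Then take $X = \overline{\{\sigma^n \omega : n \in \Z\}}$ with $\sigma$ the shift; this is a minimal subshift because $\omega$ is Toeplitz. Let $U = \{y \in X : y_0 = 1\}$, a clopen neighbourhood of $x := \omega$. Then $N(\omega, U) = \{n \ge 1 : \omega_n = 1\}$ is by construction disjoint from $S$, hence not $\SIP$ and therefore, by Theorem \ref{thm:SIpstar}, not predictive.

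The main obstacle is coordinating the three conditions in the Toeplitz construction into a single consistent choice of pattern at each level. The very rapid growth of $(s_k)$, which forces $S$ to be extremely thin, is what provides the room to do this — at each new level only finitely many positions of the new residue class ever meet $S$, so one can freely assign $0$ there and still place $1$'s densely on the remainder of the class. Once this is engineered, minimality of the resulting subshift and syndeticity of the $1$'s follow automatically from the Toeplitz property and the presence of $\omega_0 = 1$.
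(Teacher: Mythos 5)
Your first reduction is exactly the paper's: by Theorem \ref{thm:SIpstar} it suffices to produce a minimal subshift, a point $x$ with $x_0=1$, and an SIP set $S$ with $x_n=0$ for all $n\in S$, so that $N(x,[1])$ misses $S$. The problem is the second step: the Toeplitz construction you propose is not merely hard to coordinate, it is impossible. If $\omega$ is Toeplitz and $\omega_0=1$, then by definition position $0$ has an essential period $p$ with $\omega_{kp}=1$ for all $k\in\Z$, so $N(\omega,[1])\supset p\N$. But $p\N$ meets \emph{every} SIP set, no matter how sparse: among the $p+1$ partial sums $0,\ s_1,\ s_1+s_2,\ \ldots,\ s_1+\cdots+s_p$ two are congruent modulo $p$, and their difference $s_{k+1}+\cdots+s_l$ is a positive element of $\SIPp(s_1,s_2,\ldots)$ divisible by $p$. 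Hence no Toeplitz sequence can be $1$ at the origin and vanish on an SIP, and rapid growth of the $s_k$ does not help. (The same pigeonhole argument is why $k\N$ is predictive and why arithmetic progressions through $0$ can never avoid an SIP.)

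The paper gets around this by decoupling the minimal subshift from the point realizing the pattern. It invokes the Glasner--Weiss result (Proposition \ref{Proposition: Glasner Weiss}): if $A\subset\Z$ is \emph{small} (for every $n$, the set of $i$ with $[i,i+n]\cap A=\emptyset$ is syndetic), then there is a single minimal subshift $X\subset\{0,1\}^\Z$ containing, for every $C\subset A$, some point $x$ with $x|_A=1_C$. The point $x$ produced this way need not itself be Toeplitz or have any periodic structure at the origin, which is precisely what evades the obstruction above. The remaining work is then to exhibit an SIP $S$ with $S\cup\{0\}$ small; the paper takes $S=\SIPp(5,5^2,5^3,\ldots)$ and checks smallness by observing that elements of $S$ have constrained base-$5$ expansions, so long gaps in $S$ recur syndetically. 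If you want to salvage your write-up, you should replace the Toeplitz construction by an appeal to a result of this type (or reprove it); as it stands the central step of your argument cannot be carried out.
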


A set $A\subset \Z$ is called \emph{small} if orbit closure of the characteristic function $1_A\in \{0,1\}^\Z$ contains a unique minimal subshift $\{0\}^\Z$. Equivalently, a set $A\subset \Z$ is \emph{small} if and only if for all $n\in \N$ the set 
$$\{i~:~[i, i+n]\cap S=\emptyset\}$$
has bounded gaps. We will need the following result of Glasner and Weiss \cite{MR710239} (see also \cite[Section 10]{glasner2019bernoulli} for a new proof which extends to all countable groups). 

\begin{proposition}\label{Proposition: Glasner Weiss}
	If a set $A\subset \Z$ is small then there exists a minimal subshift $X\subset \{0,1\}^\Z$ such that for all $C\subset A$, there exists $x\in X$ such that $x|_A=1_C$.
	\end{proposition}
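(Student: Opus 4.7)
The plan is to build a minimal subshift $(X,\sigma)$ and a point $x\in X$ whose set of visits to the cylinder $U=[1]\cap X$ avoids a prescribed SIP set; then Theorem \ref{thm:SIpstar} forces $N(x,U)$ to fail to be predictive. The two key ingredients are Theorem \ref{thm:SIpstar} (predictive sets are $\SIP$) and Proposition \ref{Proposition: Glasner Weiss} (patterns on small sets can be realised in minimal subshifts).

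First I would construct an SIP set $S=\SIPp(s_1,s_2,\ldots)$ whose union $A:=S\cup\{0\}$ is small. Choose $(s_i)_{i\in\N}$ to grow very rapidly, for instance $s_{i+1}>3\sum_{j\leq i}s_j$, and set $T_k:=\sum_{j\leq k}s_j$, so $T_k<s_{k+1}/3$. The positive elements of $\SIPp(s_1,s_2,\ldots)$ then fall into well-separated ``$k$-clusters'' $s_k+V_k$, where $V_k:=\{\sum_{j<k}\epsilon_j s_j:\epsilon_j\in\{-1,0,1\}\}\subset[-T_{k-1},T_{k-1}]$, with inter-cluster gaps of size $\geq s_{k+1}-2T_k$ that tend to infinity. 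Moreover, the recursion $V_k=V_{k-1}\cup(V_{k-1}+s_{k-1})\cup(V_{k-1}-s_{k-1})$ shows that $V_k$ itself contains an internal gap of size $\geq s_{k-1}-2T_{k-2}$, which also tends to infinity with $k$.

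Given $n\in\N$, I would then produce a bound $M=M(n)$ such that every interval $[J,J+M]\subset\Z$ contains a sub-interval of length $n$ disjoint from $A$. This is done by casework on how $[J,J+M]$ intersects the cluster structure: if the interval lies in an inter-cluster gap of size $\geq n$ it is done trivially; if it meets only low-index clusters (indices $k$ with $s_{k-1}<n$, of which there are finitely many), take $M$ larger than the combined width of these clusters plus $n$, so that the interval extends past them into the adjacent inter-cluster gaps; and if it meets a large-index cluster, use the self-similar decomposition to locate an internal gap of size $\geq s_{k-1}-2T_{k-2}>n$ within $[J,J+M]$. Verifying this combinatorial dichotomy uniformly is the main technical obstacle, and is where the rapid growth of $(s_i)$ is decisive.

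Having established that $A$ is small, Proposition \ref{Proposition: Glasner Weiss} yields a minimal subshift $X\subset\{0,1\}^\Z$ and a point $x\in X$ realising the pattern $x|_A=1_{\{0\}}$, so $x_0=1$ and $x_n=0$ for every $n\in S$. Taking $T=\sigma$ and $U:=[1]\cap X$, the set $U$ is open in $X$, $x\in U$, and
\[
N(x,U)=\{n\in\N:x_n=1\}\subset\N\setminus S.
\]
Since $S$ is an SIP while $N(x,U)\cap S=\emptyset$, the set $N(x,U)$ is not $\SIP$. By Theorem \ref{thm:SIpstar} it is therefore not predictive, completing the proof.
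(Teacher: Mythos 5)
You have not proved the statement you were asked to prove. Proposition \ref{Proposition: Glasner Weiss} is the Glasner--Weiss realization theorem: given a small set $A\subset\Z$, one must \emph{construct} a minimal subshift $X\subset\{0,1\}^\Z$ in which every pattern $1_C$, $C\subset A$, is realized by some point. Your argument instead establishes Proposition \ref{Proposition:minimal_not_predictive} (the existence of a minimal system with a non-predictive visit-time set), and it does so by invoking Proposition \ref{Proposition: Glasner Weiss} itself as a black box (``Having established that $A$ is small, Proposition \ref{Proposition: Glasner Weiss} yields a minimal subshift\dots''). As a proof of Proposition \ref{Proposition: Glasner Weiss} this is circular; as a piece of mathematics it addresses a downstream corollary rather than the proposition. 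For the record, the paper does not prove this proposition either: it is quoted from Glasner and Weiss with a citation, and is an external input to the argument.

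What a genuine proof requires is a construction. The useful reformulation of smallness is that for every $n$ the set $\{i:[i,i+n]\cap A=\emptyset\}$ is syndetic, i.e.\ the complement of $A$ contains arbitrarily long intervals syndetically. One then builds $X$ by an inductive block construction: at stage $n$ one has a finite family of admissible words, and one uses the syndetically occurring long $A$-free intervals to insert copies of all previously admissible words syndetically into every sufficiently long admissible word, while leaving the symbols at positions of $A$ entirely unconstrained. The first feature forces minimality of the resulting orbit closure; the second guarantees that every $1_C$ with $C\subset A$ extends to a point of $X$. None of this appears in your write-up. Your combinatorial work showing that $\SIPp(s_1,s_2,\ldots)\cup\{0\}$ is small for rapidly growing $s_i$ is reasonable in spirit (the paper carries out the analogous computation with $s_i=5^i$ via base-$5$ expansions), but it belongs to the proof of Proposition \ref{Proposition:minimal_not_predictive}, not of Proposition \ref{Proposition: Glasner Weiss}.
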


The converse is also true and has been proved in \cite{MR710239}.

\begin{proof}[Proof of Proposition \ref{Proposition:minimal_not_predictive}]
Since predictive sets are $\SIP$ it is sufficient to find an SIP $S\subset \N$ and a point $x\in \{0,1\}^\Z$ contained in a minimal shift such that $x_0=1$ and $x_i=0$ for all $i \in S$. This would imply that $N(x,[1])$ is disjoint from $S$ and imply that it is not predictive. By the previous proposition (Proposition \ref{Proposition: Glasner Weiss}) it is sufficient to find an SIP $S\subset \N$ such that $S\cup \{0\}$ is small. 

Let $s_i=5^i$ for $i \in \N$. Then $S:=\SIPp(s_1,s_2, \ldots)$ is small: Numbers in $S$ are contained in those whose base $5$ expansion consists of $0, 1, 3$ and $4$. 
Thus for all $n\in \N$ we have that 
$$\{i \in \Z~:~[i, i+n]\cap S = \emptyset\}$$
cannot have a gap bigger than $n+ 5^{m+1} $ where $m$ is the least number such that $n<5^m$.
	\end{proof}

It is still an interesting question whether the visit times of specific minimal systems are predictive.

\begin{question}
	Let $(x_i)_{i\in \N\cup \{0\}} = 1,0,0,1,0,1,1,0, \ldots $ be the Thue-Morse sequence. Is $\{i\in \N~:~ x_i=1\}$ predictive?
\end{question}

We end this section with some indication why we think the answer to Question \ref{Question:positive definite} is an affirmative. Given a set $A\subset \N$, we denote its \emph{difference set} by 
 $$\Delta(A):=\{|a-b|~:~a, b\text{ are distinct elements of }A\}.$$  A $\Delta_r$-set is a difference set of a set $A$ where $|A|=r$.
 A set $Q\subset \N$ is called $\Delta_r^\star$ if for all $\Delta_r$-sets $A$, $A\cap Q\neq \emptyset$. 
 
\begin{remark}\label{remark:about deltastar}
To illustrate the definition let us consider small values of $r$. 
\begin{enumerate}[(i)]
	\item $\Delta_2$-sets are precisely all the singletons of $\N$ and consequently $\N$ is the only $\Delta_2^\star$-set. 
	\item Suppose $x<y<z$ are natural numbers. If we set $a:=y-x$ and $b=z-y$, we see that $\Delta(\{x,y,z\})=\{a,b,a+b\}$.
	One can quickly deduce that $\Delta_3$-sets are precisely all the sets of the type $\{a, b, a+b\}$ for some $a, b \in \N$. Thus $\Delta_3^\star$-sets are precisely those sets $P$ for which if $a,b\in \N\setminus P$ then $a+b\in P$, that is, sets $P$ for which $(\N\setminus P)+(\N\setminus P)$ is disjoint from $(\N\setminus P)$.
	\item In a similar fashion as above, we can deduce that $\Delta_4$-sets are precisely the sets of the form
	$$\{a,b,c,a+b, b+c, a+b+c\}$$
	for $a, b, c\in \N$. Since $a=(a+b+c)-(b+c)=(a+b)-b$, if $P\subset \N$ is such that there do not exist $x<y\leq z<w\in \N\setminus P$ such that $y-x=w-z$ then $P$ is a $\Delta_4^\star$. In particular, if $\N\setminus P=\{n_i~:~i \in \N\}$ where $n_i$ is a strictly increasing sequence for which $n_{i+1}-n_i$ is also strictly increasing then $P$ is $\Delta_4^\star$.
\end{enumerate}

 \end{remark}

Since SIP sets contain difference sets of all sizes we have that $\Delta_r^\star$-sets are $\SIP$. Thus $\Delta_r^\star$-sets have bounded gaps. The following proposition is well-known \cite[Page 4]{MR1774423}.
 
\begin{proposition}\label{prop:bounded gaps of positive definite sequecnes}
	Fix $\epsilon>0$ and a real valued positive definite sequence $a_n; n\in \Z$. The set 
	$$\{n\in \N~:~a_n >-\epsilon\}$$
	is $\Delta_r^\star$ for some $r\in \N$. In particular, it has bounded gaps. 
\end{proposition}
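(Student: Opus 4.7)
The plan is to reduce everything to Herglotz's theorem (cited in the paper just before the question) and then extract a quantitative bound by expanding an obvious non-negative $L^2$-norm. Writing $a_n = \hat\mu(n)$ for a finite non-negative measure $\mu$ on $\mathbb T$, I would fix an arbitrary finite set $A=\{n_1<n_2<\cdots<n_r\}\subset \N$ and consider the manifestly non-negative quantity
$$
0\leq \int_{\mathbb T}\Bigl|\sum_{j=1}^{r}e^{2\pi i n_j t}\Bigr|^{2}\,d\mu(t)=\sum_{j,k=1}^{r}\hat{\mu}(n_{k}-n_{j})=\sum_{j,k=1}^{r}a_{n_{k}-n_{j}}.
$$
Using that $(a_n)$ is real (so $a_{-n}=a_{n}$), this rearranges to $r\,a_{0}+2\sum_{j<k}a_{n_{k}-n_{j}}\geq 0$.

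Next I would exploit the assumption that $\Delta(A)$ avoids the set $S:=\{n\in\N:a_n>-\epsilon\}$: every difference $n_k-n_j$ with $j<k$ is then an element of $\N\setminus S$, so $a_{n_k-n_j}\leq -\epsilon$. Substituting,
$$
0\leq r\,a_{0}-2\epsilon\binom{r}{2}=r\bigl(a_{0}-\epsilon(r-1)\bigr),
$$
which is impossible as soon as $r>1+a_{0}/\epsilon$. Consequently, for any integer $r_{0}>1+a_{0}/\epsilon$, every $\Delta_{r_{0}}$-set must meet $S$; this is exactly the statement that $S$ is $\Delta_{r_{0}}^{\star}$.

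The ``in particular'' clause then follows from the observations made in the previous paragraph of the paper: $\Delta_{r}^{\star}$-sets contain no arbitrarily long ``difference-free'' configurations, and since every SIP set contains a $\Delta_{r}$-set for each $r$, a $\Delta_{r}^{\star}$ set is in particular $\SIP^{\star}$ and hence has bounded gaps by Corollary \ref{cor: bounded gaps predictive}. I do not expect a serious obstacle here; the only thing to double-check is that the positivity argument really applies uniformly with the constant $a_{0}$ (the total mass of $\mu$), and that $a_0\geq 0$, which is automatic from positive definiteness. Everything else is a one-line expansion of $\|\sum e^{2\pi i n_j t}\|_{L^{2}(\mu)}^{2}$.
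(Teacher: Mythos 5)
Your proof is correct and is essentially the paper's argument: the paper also expands the non-negative quantity $\langle V,V\rangle$ for $V=\sum_j U^{n_j}v$ (phrased abstractly via a unitary realization of the positive definite sequence rather than via the Herglotz measure) and derives the same bound $r\leq 1+a_0/\epsilon$ from the assumption that all off-diagonal terms are $\leq-\epsilon$. The deduction of bounded gaps from the $\Delta_r^\star$ property is likewise the one the paper makes just before the proposition.
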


This shows evidence that the set $\{n\in \N~:~a_n >-\epsilon\}$ is a predictive set. We had raised this issue earlier in Question \ref{Question:positive definite}. We will use the following fact to prove the proposition.

\begin{lemma}\label{lemma: Hilbert space inner}
	Let $\epsilon>0$ and $H$ be a Hilbert space with inner product $\langle\cdot, \cdot \rangle$. If $v_1, v_2, \ldots, v_r\in H$ are unit vectors such that for all distinct $i, j$,
	$\langle v_i, v_j\rangle\leq -\epsilon$ then $r\leq 1+ 1/\epsilon$. 
\end{lemma}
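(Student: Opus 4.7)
The plan is to exploit non-negativity of the squared norm of the sum $v_1 + v_2 + \cdots + v_r$. Concretely, I would start by writing
$$0 \;\leq\; \Bigl\| \sum_{i=1}^r v_i \Bigr\|^2 \;=\; \sum_{i=1}^r \|v_i\|^2 \;+\; \sum_{i\neq j} \langle v_i, v_j\rangle.$$

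Since each $v_i$ is a unit vector, the first sum equals $r$. By the hypothesis, each of the $r(r-1)$ cross terms satisfies $\langle v_i, v_j\rangle \leq -\epsilon$, so the second sum is bounded above by $-\epsilon\, r(r-1)$. Substituting gives $0 \leq r - \epsilon\, r(r-1)$, which after dividing by $r$ (assuming $r\geq 1$, else the conclusion is trivial) yields $\epsilon(r-1) \leq 1$, i.e. $r \leq 1 + 1/\epsilon$.

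There is essentially no obstacle here; the only thing to be careful about is handling the trivial case $r=0$ or $r=1$ separately (for which the bound is automatic), and making sure the direction of the inequality is correct when passing from $\langle v_i, v_j\rangle \leq -\epsilon$ to an upper bound on the double sum. The argument is the standard ``equiangular lines with negative inner product'' estimate, and no further machinery from the paper is required.
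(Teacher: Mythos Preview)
Your proposal is correct and is essentially identical to the paper's own proof: both expand $0 \le \langle \sum_i v_i, \sum_i v_i\rangle$, use that the diagonal contributes $r$ and the off-diagonal at most $-\epsilon r(r-1)$, and rearrange to get $r \le 1 + 1/\epsilon$. Your explicit handling of the trivial cases $r=0,1$ is a small addition but otherwise the arguments coincide.
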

We prove this for the sake of completeness.
\begin{proof}
Let $V:=v_1+v_2+\ldots+v_r$. Then we have
$$0\leq \langle V, V \rangle = \sum_{i=1}^r\langle v_i, v_i\rangle + \sum_{i\neq j} \langle v_i, v_j\rangle \leq r -(r^2-r)\epsilon.$$
Thus $r\leq 1+ 1/\epsilon$.
\end{proof}

\begin{proof}[Proof of Proposition \ref{prop:bounded gaps of positive definite sequecnes}]
Let $H$ be a Hilbert space with inner product $\langle\cdot, \cdot \rangle$, $U$ be a unitary operator and $v\in \H$ such that 
$a_n:=\langle U^n v, v \rangle$. Let $A:=\{i_1, i_2, \ldots, i_r\}\subset \N$ where $r$ is an integer such that $r>1+ \frac{ \langle v, v\rangle}{\epsilon}$. It follows from Lemma \ref{lemma: Hilbert space inner} that there must be $1\leq j <k \leq r$ such that 
$$\langle U^k v, U^j v \rangle > -\epsilon.$$
It follows that $a_{k-j}=\langle  U^{k-j} v,  v \rangle >-\epsilon$. 
\end{proof}

\section{Linear predictivity}\label{Section:Comments Gaussian}

In this section we will talk about a much stronger form of prediction: Let $(\Omega, \mathcal B,\nu)$ be a probability space and $X, Y_i\in L^2(\Omega);i \in \N$ be complex-valued random variables. We will say that $X$ can be \emph{linearly predicted }using $Y_\N$ if $X$ is in the closed Hilbert space generated by $Y_\N$. If $H(X~|~Y_\N)=0$ we will say that $X$ can be \emph{predicted} using $Y_\N$.

We now recall Szeg\"o's \cite{szego} and Verblunsky's theorem \cite{MR1575824} (abbreviated as S-V theorem from here on). For a proof in the form that we state the results do consider \cite[Page 122]{MR2656971} and \cite{MR350291}. In the following let $\mu_l$ denote the Lebesgue measure on $\mathbb T$.

\begin{theorem}
	\begin{enumerate}[(i)]
		\item Let $\mu$ be an absolutely continuous probability measure on $\mathbb T$ with density $f$. Then
		$$ \exp(\int_{\mathbb{T}}\log(f)d\mu_l)= \inf_{p}\int_{\mathbb{T}}|1+p|^2d\mu$$
		where $p$ ranges over polynomials $\sum_{j=1}^n a_j e^{2\pi i j x}$ where $n\in \N$ and $a_j\in \mathbb C$. The left hand side is taken to be zero when $\log(f)$ is not integrable.
		\item If $\mu$ is a singular probability measure on $\mathbb T$ then 
		$$\inf_{p}\int_{\mathbb{T}}|1+p|^2d\mu=0$$
		where $p$ ranges over the same set of polynomials as above.
	\end{enumerate}
 \end{theorem}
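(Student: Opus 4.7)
My plan is to treat both parts through the Hilbert-space reformulation $c(\mu) := \inf_p \int|1+p|^2 d\mu = d(1,M_\mu)^2$, where $M_\mu \subset L^2(\mu)$ is the closed linear span of $\{e^{2\pi i j x} : j \geq 1\}$. Statement (ii) is then the assertion that $1 \in M_\mu$ when $\mu$ is singular, while (i) identifies this distance in terms of the Szeg\"o integral of the density.

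For part (ii) I argue by contradiction using the F.\ and M.\ Riesz theorem. Suppose $1 \notin M_\mu$; then there is some $g \in L^2(\mu)$ orthogonal to every $e^{2\pi i j x}$ with $j\geq 1$ but with $\int \bar g\,d\mu\neq 0$. The complex measure $\nu := \bar g\,d\mu$ satisfies $\hat\nu(-j)=\int e^{2\pi i j x}\bar g\,d\mu=0$ for every $j\geq 1$, so F.\ and M.\ Riesz forces $\nu \ll \mu_l$. But $\nu$ is also absolutely continuous with respect to the singular measure $\mu$, so $\nu=0$, contradicting $\hat\nu(0)\neq 0$.

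For part (i) the lower bound is Jensen's inequality applied to the probability measure $\mu_l$, combined with the subharmonicity of $\log|1+p|$: since $p(0)=0$, Jensen's formula yields $\int \log|1+p|\,d\mu_l \geq \log|1+p(0)|=0$, hence
\begin{equation*}
\int|1+p|^2 f\,d\mu_l \;\geq\; \exp\!\left(\int \log f\,d\mu_l + 2\int\log|1+p|\,d\mu_l\right) \;\geq\; \exp\!\left(\int \log f\,d\mu_l\right).
\end{equation*}
For the matching upper bound when $\log f \in L^1(\mu_l)$, I would exhibit the Szeg\"o outer function
\begin{equation*}
h(z) := \exp\!\left(\tfrac{1}{4\pi}\int\tfrac{e^{2\pi i t}+z}{e^{2\pi i t}-z}\log f(t)\,d\mu_l(t)\right),
\end{equation*}
which lies in $H^2(\mathbb{D})$ with boundary values $|h|^2=f$ a.e.\ and $h(0)=\exp(\tfrac12\int\log f\,d\mu_l)$. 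Rewriting $c(\mu)=\inf_p\|(1+p)h\|_{L^2(\mu_l)}^2$ and using that outer functions are cyclic under the shift (so $\{ph : p \text{ an analytic polynomial with } p(0)=0\}$ is dense in $zH^2$), the infimum collapses to the squared length of the projection of $h$ onto $(zH^2)^\perp$, which is the constant term $h(0)$. This gives $c(\mu)=|h(0)|^2=\exp(\int\log f\,d\mu_l)$.

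The residual case $\log f \notin L^1(\mu_l)$ follows by monotone approximation: apply the previous paragraph to $(f+\epsilon)\,d\mu_l$ to obtain $\inf_p\int|1+p|^2(f+\epsilon)\,d\mu_l = \exp(\int\log(f+\epsilon)\,d\mu_l)$; by monotone convergence the right-hand side tends to $\exp(-\infty)=0$ as $\epsilon\downarrow 0$, and since $f\leq f+\epsilon$ this squeezes $c(\mu)$ to zero. The principal obstacle is classical rather than combinatorial, namely marshalling the F.\ and M.\ Riesz theorem and the boundary behaviour of the Szeg\"o outer function $h$; once those are in hand, the remaining Hilbert-space arithmetic is routine.
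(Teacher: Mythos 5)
The paper does not prove this statement at all: it is quoted as the classical Szeg\H{o}--Verblunsky theorem and the authors simply cite the literature for a proof. So there is nothing internal to compare against; what you have written is the standard textbook argument, and it is essentially correct. Part (ii) via the F.\ and M.\ Riesz theorem (the nonzero component of $1$ orthogonal to the span of $\{e^{2\pi i jx}\}_{j\ge 1}$ would produce a nonzero complex measure that is simultaneously absolutely continuous and singular) is exactly the classical route, and in part (i) the Jensen lower bound together with the reduction $\inf_p\|(1+p)h\|_{L^2(\mu_l)}^2=d(h,zH^2)^2=|h(0)|^2$ for the outer function $h$, followed by the $f+\epsilon$ approximation when $\log f\notin L^1$, is sound. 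One small slip: with $\mu_l$ the normalized Lebesgue measure on $\mathbb T=\R/\Z$ (as in the paper), the outer function should be
$$h(z)=\exp\Bigl(\tfrac12\int_{\mathbb T}\tfrac{e^{2\pi i t}+z}{e^{2\pi i t}-z}\,\log f(t)\,d\mu_l(t)\Bigr),$$
not with the constant $\tfrac{1}{4\pi}$; otherwise $h(0)$ would not equal $\exp(\tfrac12\int\log f\,d\mu_l)$ and $|h|^2$ would not equal $f$ on the boundary. You should also record the justification that $\{ph: p(0)=0,\ p \text{ analytic polynomial}\}$ is dense in $zH^2$, namely Beurling's theorem (cyclicity of outer functions) composed with the isometry of multiplication by $e^{2\pi i x}$; with that reference in place the argument is complete.
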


Let us see how this relates to linear predictivity. Let $(Y, \mu, S)$ be a probability preserving transformation and $g\in L^2(Y)$ and spectral measure $\mu_g$. Let $H$ be the closed linear span of $T^i(g);i\in \N$. We know that 
$$\| 1+ \sum_{j=1}^n a_j e^{2\pi i j}\|_{L^2(\mu_g)}=\| g+ \sum_{j=1}^n a_j T^j(g)\|_{L^2(\mu)}.$$
Thus S-V theorem implies that $g$ belongs to $H$ if and only if either $\mu_g$ is singular or it has a non-integrable log density.

In particular if $(Y, \mu, S)$ has zero entropy and has a Lebesgue component in its spectrum, it would give an example of a process $X_\Z$ for which $X_0$ can be predicted by $X_\N$ but not linearly. Starting with constructions by Newton and Parry in \cite{MR206209} several such examples have been constructed. We will mention one which is particularly easy to check and is based on the example by Flaminio which is $k$-fold independent, mixing but has zero entropy \cite{MR1154245}.

\begin{example}
	Let $\mu$ denote the Lebesgue measure on $\mathbb T^2$ and $T:\mathbb T^2\to \mathbb T^2$ be given by $T(x,y):=(x+y, y)$. The ergodic components of $(\mathbb T^2, \mu, T)$ are isomorphic to rotations of the circle and hence the ppt has zero entropy. Let $X_\Z$ on $(\mathbb T^2, \mu)$ be the process given by $X_i(x,y):=2\:1_{[0,1/2]\times \mathbb T}(T^i(x,y))-1$. Then $\mathbb E(X_i X_j)=1_{i=j}$ for $i\neq j$. Since the $X_i$'s take two values $1$ and $-1$ and have mean-zero, it follows that the process is $2$-fold independent. In fact $ (\mathbb T^2, \mu, T)$ has countable Lebesgue spectrum in the ortho-complement of the space of invariant functions: Let $f_{n,k}(x, y):=e^{2\pi i (nx+k y)}$ for $n, k\in \Z$. Then the collection 
	$$T^r(f_{n,k})=f_{n,k+nr}; r\in \Z, n\in \Z\setminus \{0\}, k\in \{1,2, \ldots, |n|\} $$	forms an orthonormal basis in the ortho-complement of space of invariant functions. 
	 
	 This gives us an example where $X_0$ can be predicted using $X_\N$ but cannot be predicted linearly.  Though this example is not ergodic, by a slight modification (as in \cite{MR1154245}) one can construct an ergodic example: The transformation involved is a skew product over an irrational rotation by $\alpha \in \mathbb{T}$ denoted by $S: \mathbb T^2\to \mathbb{T}^2$ given by $S(x,y):=(x+y,y+\alpha)$ while the process $Y_\Z$ on $(\mathbb T^2, \mu, S)$ is given by $Y_i(x,y):=2\:1_{[0,1/2]\times \mathbb T}(T^i(x,y))-1$.
	 
	Mixing examples can be constructed using horocycle flows \cite{MR0488168,Kushnirenko_1967}, factors of Gaussian processes \cite{MR206209} or following Flaminio's example \cite{MR1154245}.
\end{example}

 Before going further into linear prediction we will first recall some aspects of Gaussian processes. 
 
 \begin{proposition}\label{prop:Gaussian_predict}
 	Let $X_\Z$ be a Gaussian process and $P\subset \Z$. Then $\mathbb{E}(X_0~|~X_P)$ is in the closed linear span of $X_i; i\in P$. Thus $X_0$ can be predicted by $X_P$ if and only if it can be predicted linearly.
 \end{proposition}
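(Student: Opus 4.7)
The plan is to identify the conditional expectation $\E(X_0\mid X_P)$ with the orthogonal projection $\hat{X}_0$ of $X_0$ onto the closed linear subspace $H_P \subset L^2$ generated by $\{X_i : i \in P\}$. This is the classical fact that conditional expectation among jointly Gaussian variables is linear; I would simply execute it carefully in the infinite-dimensional setting of the proposition.

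First I would let $\hat{X}_0$ be the orthogonal projection of $X_0$ onto $H_P$, so $\hat{X}_0$ is an $L^2$-limit of finite linear combinations $\sum_{j} a_j^{(n)} X_{i_j}$ with $i_j \in P$. Each such finite combination is a zero-mean complex Gaussian random variable by definition of a Gaussian process, and Gaussian distributions are closed under $L^2$-limits (equivalently, under weak limits given convergence of the covariance parameters), so $\hat{X}_0$ is Gaussian. Set $Z := X_0 - \hat{X}_0$. By the projection property, $\E(Z \overline{X_i}) = 0$ for every $i \in P$.

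Next I would observe that the family $\{Z\} \cup \{X_i : i \in P\}$ is jointly Gaussian. Indeed, any finite linear combination
$$aZ + \sum_{j=1}^k b_j X_{i_j} \;=\; aX_0 + \sum_{j=1}^k b_j X_{i_j} - a\hat{X}_0$$
is an $L^2$-limit of elements of the linear span of $\{X_i : i \in \Z\}$, each of which is Gaussian; hence the limit is Gaussian as well. Since for jointly Gaussian centered variables zero correlation is equivalent to independence, $Z$ is independent of the $\sigma$-algebra generated by $\{X_i : i \in P\}$. Because $\hat{X}_0$ lies in $H_P$ and so is $X_P$-measurable (being the $L^2$-limit of $X_P$-measurable variables), we conclude
$$\E(X_0 \mid X_P) \;=\; \hat{X}_0 + \E(Z \mid X_P) \;=\; \hat{X}_0 + \E(Z) \;=\; \hat{X}_0 \;\in\; H_P,$$
which is the first assertion. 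The second assertion follows immediately: $X_0$ is $\sigma(X_P)$-measurable iff $X_0 = \E(X_0\mid X_P) = \hat{X}_0$ iff $X_0 \in H_P$, i.e., iff $X_0$ can be predicted linearly from $X_P$.

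The one point that requires care, and is the main (very mild) obstacle, is justifying joint Gaussianity of the infinite family involving the $L^2$-limit $\hat{X}_0$. This is a standard fact about Gaussian Hilbert spaces and can be made rigorous by approximating $\hat{X}_0$ by partial sums $S_n \to \hat{X}_0$ in $L^2$, noting that for each $n$ and each finite set $i_1,\dots, i_k \in P$ the tuple $(X_0, S_n, X_{i_1}, \dots, X_{i_k})$ is jointly Gaussian, and then passing to the limit via characteristic functions (convergence in $L^2$ forces convergence of covariances, hence of the Gaussian characteristic functions).
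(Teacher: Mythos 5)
Your proposal is correct and follows essentially the same route as the paper: decompose $X_0$ as its orthogonal projection onto the closed span of $\{X_i : i\in P\}$ plus an orthogonal remainder, note that orthogonality implies independence for jointly Gaussian variables, and conclude that the conditional expectation equals the projection. Your extra care in justifying joint Gaussianity of the $L^2$-limit is a reasonable elaboration of a step the paper states without detail.
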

 
 \begin{proof}
Let $X_\Z$ be a Gaussian process on the probability space $(\Omega, \mathcal B, \mu)$. Let $H\subset L^2(\Omega)$ be the Hilbert space generated by $X_i; i \in P$ and consider the decomposition $X_0= X_0'+X_0''$ where $X_0'\in H$ and $X_0''\in H^\perp$. Then $(X_0', X_P)$  and $(X_0'', X_P)$ are also jointly Gaussian. On the other hand, orthogonality is the same as independence for Gaussian random variables. Thus
$$\mathbb{E}(X_0''~|~X_P)=\mathbb{E}(X_0'')=0$$
and thus
$$\mathbb{E}(X_0~|~X_P)=\mathbb{E}(X_0'~|~X_P)+\mathbb{E}(X_0''~|~X_P)=X_0'\in H.$$
Now if $X_0$ can be predicted by $X_P$ then 
$$X_0=\mathbb{E}(X_0~|~X_P)\in H$$
implying that $X_0$ can be predicted linearly. The converse is obvious.
 \end{proof}
 
Look at \cite{vershik1964some} to see how a generalisation of such a property can characterise ergodic Gaussian processes. We can use this to prove linear predictivity in processes with singular spectral measure.
 
 \begin{proposition} Let $X_\Z$ be a real-valued (possibly infinite-valued) process on the probability space $(\Omega, \mathcal B, \mu)$ such that $X_0\in L^2(\mu)$. If it has singular spectral measure and $P$ is a predictive set then $X_P$ can linearly predict $X_0$.
 \end{proposition}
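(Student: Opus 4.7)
The plan is to reduce linear predictability of $X_0$ from $X_P$ to ordinary (measurable) predictability in a Gaussian model with the same spectral measure, and then invoke the predictivity of $P$ together with Proposition~\ref{prop:Gaussian_predict}.

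More concretely, let $\nu$ be the spectral measure of $X_\Z$, which is by hypothesis singular. The first step is to introduce a real-valued Gaussian process $G_\Z$ with the same spectral measure $\nu$. By property (ii) of Gaussian processes recalled after Proposition~\ref{Proposition:infinite_valued_predictive}, $G_\Z$ has zero entropy. Applying Proposition~\ref{Proposition:infinite_valued_predictive} (to allow for the infinite state space of the Gaussian process) together with the predictivity of $P$, we conclude that $G_0$ is measurable with respect to $G_P$. Then Proposition~\ref{prop:Gaussian_predict} upgrades this automatically: $G_0$ lies in the closed linear span of $\{G_i : i \in P\}$ inside $L^2$ of the Gaussian probability space.

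The next step is a transfer of linear predictability from $G$ to $X$ via the spectral isomorphism. By the standard construction, the map $X_n \mapsto e^{-2\pi i n x}$ extends to a unitary isomorphism between the closed linear span of $\{X_i : i \in \Z\}$ in $L^2(\mu)$ and $L^2(\mathbb{T},\nu)$; the analogous isomorphism holds for $G_\Z$ since $G$ has the same spectral measure. Under either isomorphism, the closed linear span of $\{X_i : i \in P\}$ (respectively $\{G_i : i \in P\}$) corresponds to the same closed subspace $H_P \subset L^2(\mathbb{T},\nu)$ generated by $\{e^{-2\pi i n x} : n \in P\}$, and $X_0$ and $G_0$ both correspond to the constant function $1$. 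So the assertion that $G_0$ lies in the closed linear span of $\{G_i : i \in P\}$ is equivalent to $1 \in H_P$, which is in turn equivalent to $X_0$ lying in the closed linear span of $\{X_i : i \in P\}$. This gives linear predictability of $X_0$ from $X_P$.

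The one subtlety I anticipate is justifying the use of Proposition~\ref{Proposition:infinite_valued_predictive} in the Gaussian setting: the Gaussian process is real-valued on $\R$, a separable Borel space, so the hypothesis is satisfied, but one should note this explicitly. Everything else is routine once the spectral isomorphism is in place; the argument does not use anything about $X_\Z$ beyond its covariance (spectral) structure, which is exactly why linear prediction is the right concept here.
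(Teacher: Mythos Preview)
Your proposal is correct and follows essentially the same route as the paper: build a Gaussian process with the given singular spectral measure, use Proposition~\ref{Proposition:infinite_valued_predictive} and Proposition~\ref{prop:Gaussian_predict} to get linear predictability there, and transfer back to $X_\Z$ via the common spectral structure. The only cosmetic difference is that the paper phrases the transfer as $\inf_q\|1+q\|_{L^2(\nu)}=0$ over trigonometric polynomials supported on $P$, whereas you phrase it via the spectral isomorphism $X_n\mapsto e^{-2\pi i n x}$; these are the same statement.
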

 
 \begin{proof} Enumerate $P:=\{p_1, p_2, \ldots\}$
 	Let $\mu$ be the spectral measure for $X_\Z$. Let $Y_\Z$ be a Gaussian process with the same spectral measure $\mu$. Since $\mu$ is singular, $Y_\Z$ has zero entropy and 
 	$Y_0$ can be predicted (Proposition \ref{Proposition:infinite_valued_predictive}) and hence linearly predicted (Proposition \ref{prop:Gaussian_predict}) by $Y_P$. This implies that
 		$$\inf_{q}\|1+q\|_{L^2(\mu)}=0$$
 		where $q$ ranges over polynomials $\sum_{j=1}^n a_je^{2\pi i p_j}$.
 		But since $\mu$ is the spectral measure for $X_\Z$ we have that
 		$$\inf_{q}\|X_0+q\|=0$$
 	where $q$ ranges over sums of the form $\sum_{j=1}^na_j X_{p_j}$. Thus $X_P$ can linearly predict $X_0$. 
 \end{proof}
 
The same proof does not go through when the spectral measure has a density whose logarithm is non-integrable. Though we can apply S-V theorem and conclude that $X_0$ can be linearly predicted by $X_\N$, we cannot apply our results on predictive sets on the Gaussian process with the same spectral measure because it has infinite entropy. The following question arises.

\begin{question}\label{question: log_density_integrate_predict}
	Let $P$ be a predictive set and $X_\Z$ be a zero entropy real-valued process on the probability space $(\Omega, \mathcal B, \mu)$ such that $X_0\in L^2(\mu)$ and its spectral measure has a density whose logarithm is not integrable. Does this imply that $X_0$ can be linearly predicted by $X_P$ for predictive sets $P$?
\end{question}

To further exemplify the issue we use the following proposition which involves a Gaussian process with infinite entropy.
 
\begin{proposition}
For all $k\in \N$ there exists a Gaussian process $X_\Z$ for which $X_0$ can be linearly predicted by $X_{\N}$ but cannot be predicted by $X_{k\N}$.
\end{proposition}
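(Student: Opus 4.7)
The plan is to exploit the Szeg\H{o}--Verblunsky theorem together with the elementary fact that subsampling a stationary Gaussian process by a factor of $k$ corresponds to pushing forward its spectral measure under the $k$-fold covering map $\pi_k: \mathbb{T} \to \mathbb{T}$, $\pi_k(x) = kx$. Specifically, if $X_\Z$ has spectral measure $\mu$, then $Y_n := X_{kn}$ is Gaussian with spectral measure $\nu = (\pi_k)_* \mu$, since $\mathbb{E}(Y_0 \overline{Y_n}) = \hat\mu(kn) = \hat\nu(n)$. When $\mu$ is absolutely continuous with density $f$, a change of variable shows that $\nu$ is absolutely continuous with density
\[
g(u) = \frac{1}{k}\sum_{j=0}^{k-1} f\!\left(\frac{u+j}{k}\right).
\]
The task therefore reduces to producing a density $f$ on $\mathbb{T}$ for which $\log f$ is not integrable (so that $X_\N$ linearly predicts $X_0$ by the S-V theorem), while the folded density $g$ has $\log g$ integrable (so that $X_{k\N}$ fails to linearly predict $X_0$, and hence, by Proposition \ref{prop:Gaussian_predict}, fails to predict $X_0$ at all).

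The explicit choice I would make is $f = k \cdot 1_{[0,1/k]}$. Then $\log f = -\infty$ on the set $(1/k, 1)$, so $\int_\mathbb{T} \log f \, d\mu_l = -\infty$ and the S-V theorem yields $\inf_p \|1 + p\|_{L^2(\mu)} = 0$, which is precisely the statement that $X_0$ lies in the closed linear span of $\{X_n : n \ge 1\}$. On the other hand, for $u \in [0,1)$ the shifted point $(u+j)/k$ lies in the interval $[j/k, (j+1)/k]$, so it belongs to the support $[0, 1/k]$ of $f$ only when $j = 0$, giving $g(u) \equiv 1$. Thus $\nu = \mu_l$, so $Y_\Z = (X_{kn})_{n\in\Z}$ is an i.i.d. sequence of standard Gaussians, and in particular $X_0 = Y_0$ is independent of $X_{k\N} = \{Y_n : n \ge 1\}$. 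Therefore $X_0$ cannot be predicted from $X_{k\N}$ (linearly or otherwise), completing the construction.

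There is essentially no obstacle here: the only point requiring care is the verification that the push-forward formula gives the claimed density $g$, which is a one-line computation via the change of variable $u = kx - j$ on each interval $[j/k, (j+1)/k]$. One should just remark that the resulting Gaussian process has infinite entropy (since its spectral density has integrable log on a non-trivial folding) so this example lies outside the scope of our earlier predictivity results and genuinely illustrates the limitation raised in Question \ref{question: log_density_integrate_predict}.
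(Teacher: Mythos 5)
Your construction is correct and is essentially the paper's own proof: the paper takes $\mu$ to be Lebesgue measure restricted to $[-1/(2k),1/(2k)]$, invokes the Szeg\"o--Verblunsky theorem for linear predictability from $X_\N$ (non-integrable log-density), and observes that $\hat\mu(kn)=0$ for $n\neq 0$, so $X_0$ is independent of $X_{k\N}$ by joint Gaussianity. Your $f=k\cdot 1_{[0,1/k]}$ is the same measure up to translation and normalization, and your folding computation showing $\nu=\mu_l$ is exactly the statement $\hat\nu(n)=\hat\mu(kn)=0$ in a slightly different dress.
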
  

\begin{proof}
Consider the measure $\mu$ on $\mathbb T$ given by the Lebesgue measure supported on $[-1/2k,1/2k]$ and let $X_\Z$ be the corresponding Gaussian process. By S-V theorem we know that $X_0$ can be linearly predicted by $X_\N$. However
$$\mathbb E (X_0\overline{X_n})=\int_{\mathbb T}e^{-2\pi i nt} d\mu(t)$$
is $0$ if $n$ is a multiple of $k$. This shows that $X_0$ is independent of $X_{k\N}$. 
\end{proof}

\begin{question}
 Let $P\subset \N$ be a set such that if for a Gaussian process $\mathbb{E}(X_0~|~X_{\N})=X_0$ then $\mathbb{E}(X_0~|~X_{P})=X_0$. Is $\N\setminus P$ necessarily finite?
\end{question}

As a remark, we leave with  a proposition which we expand on further in the next section.  In the following and later given a measure $\mu$ and a set $H\subset L^2(\mu)$ we will denote by $\overline H$ the closure of $H$ in the $L^2$ metric and by $\text{Span}(H)$ we will denote the linear span of $H$. 

\begin{proposition}\label{prop: singular measure prob riesz}
	Let $P$ be a predictive set. Then for any singular finite non-negative measure $\mu$, 
	$$1\in \overline{\text{Span}\{e^{2\pi i p x}~:~p \in P\}}.$$
	Thus there exists $p \in P$  such that $\hat\mu(p)\neq 0.$
	\end{proposition}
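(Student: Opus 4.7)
The plan is to convert the statement about the measure $\mu$ into a statement about Gaussian processes, for which the results of the preceding sections directly apply.

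First, given a nonzero singular finite non-negative measure $\mu$ on $\mathbb{T}$ (the case $\mu=0$ being vacuous for the first assertion and irrelevant for the second), I would invoke the correspondence between such measures and Gaussian processes, taking $X_\Z$ to be the complex-valued zero mean Gaussian process with spectral measure $\mu$. Property (ii) listed before Proposition \ref{Proposition: Independence_of_SIP} tells us that since $\mu$ is singular, $X_\Z$ has zero entropy. Thus the predictivity of $P$ (combined with Proposition \ref{Proposition:infinite_valued_predictive} to handle the fact that the Gaussian process is infinite-valued) yields that $X_0$ is measurable with respect to $X_P$.

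Next, I would apply Proposition \ref{prop:Gaussian_predict}: because $X_\Z$ is Gaussian, the conditional expectation $\mathbb{E}(X_0 \mid X_P)$ coincides with the orthogonal projection of $X_0$ onto the closed linear span $H_P$ of $\{X_p : p \in P\}$ in $L^2(\Omega)$. Measurability of $X_0$ with respect to $X_P$ together with $X_0 \in L^2(\Omega)$ forces $X_0 = \mathbb{E}(X_0 \mid X_P)$, hence $X_0 \in H_P$.

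Now I would translate this back via the standard spectral isomorphism between the cyclic subspace $\overline{\text{Span}}\{X_n : n \in \Z\} \subset L^2(\Omega)$ and $L^2(\mu)$, sending $X_n \mapsto e^{2\pi i n x}$. Under this isometry $X_0 \mapsto 1$ and the image of $H_P$ is exactly $\overline{\text{Span}\{e^{2\pi i p x} : p \in P\}}$ in $L^2(\mu)$. This gives the desired containment $1 \in \overline{\text{Span}\{e^{2\pi i p x} : p \in P\}}$ in $L^2(\mu)$.

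Finally, for the Fourier coefficient consequence, note that the inner product in $L^2(\mu)$ satisfies $\langle 1, e^{2\pi i p x}\rangle_{L^2(\mu)} = \int_{\mathbb T} e^{-2\pi i p x} d\mu(x) = \hat\mu(p)$. If $\hat\mu(p) = 0$ for every $p \in P$, then $1$ would be orthogonal in $L^2(\mu)$ to the linear span of $\{e^{2\pi i p x} : p \in P\}$ and hence to its closure, contradicting that $1$ lies in that closure unless $\|1\|_{L^2(\mu)}^2 = \mu(\mathbb T) = 0$. There is no real obstacle here; the slight conceptual point to get right is simply the dictionary between $L^2$-predictability in the Gaussian process and density of exponentials in $L^2(\mu)$, which is provided by Proposition \ref{prop:Gaussian_predict} combined with the spectral isomorphism.
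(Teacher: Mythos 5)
Your proof is correct and follows essentially the same route as the paper: pass to the zero-mean Gaussian process with spectral measure $\mu$, use singularity to get zero entropy, apply predictivity together with Proposition \ref{Proposition:infinite_valued_predictive} and Proposition \ref{prop:Gaussian_predict} to place $X_0$ in the closed linear span of $\{X_p : p\in P\}$, and transfer via the spectral isometry to conclude $1\in \overline{\text{Span}\{e^{2\pi i p x} : p\in P\}}$ and hence $\hat\mu(p)\neq 0$ for some $p\in P$. You merely spell out a couple of details the paper leaves implicit (the $\mu\neq 0$ caveat and the explicit orthogonality argument), which is fine.
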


\begin{proof}
Let $X_\Z$ be the Gaussian process with spectral measure $\mu$ and zero mean. Since $\mu$ is singular we have that $X_\Z$ has zero entropy. By Proposition \ref{prop:Gaussian_predict}, $X_0$ is in the closed linear span of $X_p; p \in P$. Since $\mu$ is the spectral measure for $X_\Z$ 
$$\inf_{q'}\|1+q'\|_{L^2(\mu)}=\inf_{q}\|X_0+q\|=0$$
where $q'$ ranges over polynomials $\sum_{j=1}^k a_je^{2\pi i p_j x}$, $q$ ranges over sums of the form $\sum_{j=1}^k a_j X_{p_j}$ and $p_j\in P$ for $1\leq j \leq k$. Thus 
	$$1\in \overline{\text{Span}\{e^{2\pi i p x}~:~p \in P\}}$$
and $\hat\mu(p)\neq 0$ for some $p\in P$. 
\end{proof}

\section{Totally Predictive Sets and Riesz Sets}\label{section:Totally predictive}

Up until now we were involved in prediction of a single random variable $X_0$. In this section we will be interested in totally predictive sets; sets which can predict the entire future. Furthermore once we predict the entire future, reversing time, we can fill in the past and recover the entire process. The subject becomes more intriguing because of its connections with Riesz sets and parallels in constructions of totally predictive and Riesz sets. Let us begin with a few definitions.

\begin{definition}
	A set $P\subset \N$ is \emph{totally predictive} if for all zero entropy processes, $X_\Z$, $X_n$ is measurable with respect to $X_{P}$ for all $n\in \Z$.
\end{definition}

Clearly $\N$ is a totally predictive set: Given that $X_0$ is measurable with respect to $X_\N$, $X_{-1}$ is measurable with respect to $X_{\N\cup \{0\}}$ and so on; it follows automatically that $X_{-1}, X_{-2}, \ldots $ are also measurable with respect to $X_\N$. We begin by expanding further on Proposition \ref{prop: singular measure prob riesz} and making a connection between totally predictive sets and harmonic analysis. From here onwards, by a measure we mean a finite complex-valued measure on the circle. Given a measure $\mu$, by Hahn-Jordan decomposition theorem we can write 
$$\mu=\mu^+_{\text{real}}-\mu^-_{\text{real}}+i (\mu^+_{\text{imag}}-\mu^-_{\text{imag}})$$ where $\mu^+_{\text{real}}$, $\mu^-_{\text{real}}$ and $\mu^+_{\text{imag}}$, $\mu^-_{\text{imag}}$ are uniquely defined pairs of mutually singular non-negative finite measures. The \emph{absolute value} of $\mu$ is denote by
$$|\mu|:=\mu^+_{\text{real}}+\mu^-_{\text{real}}+ \mu^+_{\text{imag}}+\mu^-_{\text{imag}}.$$
It follows that there exists a measurable function $f: \mathbb T\to \R$ such that $d \mu= f d|\mu|$ and there exists constants $c, C>0$ such that $c<|f|<C$. This implies that $\mu$ is singular with respect to the Lebesgue measure if and only if $|\mu|$ is as well and $f\in L^2(|\mu|)$.
\begin{proposition}\label{proposition: total predictive singular}
Let $P\subset \N$  be a totally predictive. Then for any finite singular measure $\mu$ on the circle 
$$L^2(|\mu|)=\overline{\text{Span}\{e^{2\pi i p x}~:~p \in P\}}.$$  
Further there exists $p \in P$ such that $\hat\mu(p)\neq 0$ for some $p \in P$. 
\end{proposition}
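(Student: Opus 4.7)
The plan is to build on Proposition \ref{prop: singular measure prob riesz} by invoking \emph{total} predictivity (which gives prediction for every $X_n$, not just $X_0$) together with Proposition \ref{prop:Gaussian_predict}, and then to handle the complex measure $\mu$ via its polar decomposition.

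First I would reduce to the non-negative case: by Hahn-Jordan, write $d\mu = f\,d|\mu|$ with $|f|\equiv 1$ on the support of $|\mu|$; since $\mu$ is singular, so is $|\mu|$. Now let $X_\Z$ be the zero-mean Gaussian process with spectral measure $|\mu|$. Property (ii) of Gaussian processes recorded in Section \ref{Section:SIP} says that $X_\Z$ has zero entropy, so total predictivity of $P$ gives $H(X_n\mid X_P)=0$ for every $n\in\Z$, and Proposition \ref{prop:Gaussian_predict} promotes this to linear prediction: $X_n$ lies in the closed linear span of $\{X_p:p\in P\}$ in $L^2(\Omega)$. Transporting this back to $L^2(|\mu|)$ via the spectral isomorphism $X_n\leftrightarrow e^{2\pi inx}$ yields
$$e^{2\pi inx}\in \overline{\text{Span}\{e^{2\pi ipx}:p\in P\}}\quad\text{for every }n\in\Z.$$
Since trigonometric polynomials are dense in $L^2(|\mu|)$ (continuous functions are dense in $L^2$ of any finite Borel measure on $\mathbb T$, and Stone-Weierstrass provides density in $C(\mathbb T)$), this gives the first claim $L^2(|\mu|)=\overline{\text{Span}\{e^{2\pi ipx}:p\in P\}}$.

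For the non-vanishing of $\hat\mu$ somewhere on $P$, I would simply compute
$$\hat\mu(p)=\int_{\mathbb T} e^{-2\pi ipx}f(x)\,d|\mu|(x)=\langle f,\, e^{2\pi ipx}\rangle_{L^2(|\mu|)}.$$
If $\hat\mu$ vanished on all of $P$, then $f$ would be orthogonal to a dense subspace of $L^2(|\mu|)$, forcing $f=0$ in $L^2(|\mu|)$; but $|f|\equiv 1$, so $\|f\|^2_{L^2(|\mu|)}=|\mu|(\mathbb T)>0$ (assuming $\mu\neq 0$, else there is nothing to prove), a contradiction.

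The heart of the argument is the first step, where total predictivity (as opposed to the single-variable predictivity used in Proposition \ref{prop: singular measure prob riesz}) is precisely what is needed to put every exponential into the closed span and thereby recover all of $L^2(|\mu|)$. The polar decomposition step is essentially bookkeeping; the only mild subtlety is keeping the sign convention in the spectral isomorphism consistent with the Fourier coefficient convention, which is harmless since complex conjugation interchanges $\{e^{2\pi ipx}\}$ and $\{e^{-2\pi ipx}\}$ and hence preserves whether the closed linear span equals all of $L^2(|\mu|)$.
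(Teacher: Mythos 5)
Your proof is correct and follows essentially the same route as the paper: a Gaussian process with spectral measure $|\mu|$, zero entropy from singularity, total predictivity upgraded to linear prediction of every $X_n$ via Proposition \ref{prop:Gaussian_predict}, transport through the spectral isomorphism to get all exponentials in the closed span, and then pairing $f$ (where $d\mu = f\,d|\mu|$) against that dense span to find a nonvanishing Fourier coefficient. The only cosmetic difference is that the paper's convention for $|\mu|$ gives $c<|f|<C$ rather than $|f|\equiv 1$, which changes nothing in the argument.
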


\begin{proof}
Let $\mu$ be a singular measure on the circle, $d\mu=fd|\mu|$ and $X_\Z$ be the Gaussian process with spectral measure $|\mu|$. Since $|\mu|$ is singular we have that $X_\Z$ has zero entropy and $X_n$ is in the closed linear span of $X_p; p\in P$ for all $n \in \Z$ and
$$\inf_{q'}\|e^{2\pi i n x}+q'\|_{L^2(|\mu|)}=\inf_{q}\|X_n+q\|=0$$
where $q'$ ranges over polynomials $\sum_{j=1}^k a_je^{2\pi i p_j x}$,  $q$ ranges over sums of the form $\sum_{j=1}^k a_j X_{p_j}$ and $p_j\in P$ for $1\leq j \leq k$. Thus 
$$f\in L^2(|\mu|)=\overline{\text{Span}\{e^{2\pi i n x}~:~n \in \Z\}}=\overline{\text{Span}\{e^{2\pi i p x}~:~p \in P\}}\text{ and}$$  
$$\hat \mu(p)=\int_{\mathbb T}e^{-2\pi i p x}d\mu=\int_{\mathbb T}e^{-2\pi i p x}fd|\mu|\neq 0$$
for some $p \in P$.
\end{proof}
Given a finite measure $\mu$ on the circle we write the support of the Fourier coefficients of $\mu$ by
$$\supp(\hat\mu):=\{n \in \Z~:~\hat{\mu}(n)\neq 0\}.$$
Rephrasing Proposition \ref{proposition: total predictive singular} we get the following corollary.
\begin{corollary}\label{corollary:singular not outside predictive}
	If $P$ is totally predictive and $\mu$ is a finite measure such that $\supp(\hat{\mu})\subset \Z\setminus P$ then $\mu$ is not singular.
\end{corollary}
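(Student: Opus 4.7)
The plan is to derive this corollary as a direct contrapositive reformulation of Proposition \ref{proposition: total predictive singular}. The proposition already supplies what we need: for any finite singular measure $\mu$ and any totally predictive set $P$, there must exist some $p \in P$ with $\hat\mu(p) \neq 0$.

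First, I would suppose for contradiction that $\mu$ is a finite singular measure satisfying $\supp(\hat\mu) \subset \Z \setminus P$. By the definition of $\supp(\hat\mu)$, this means $\hat\mu(n) = 0$ for every $n \in P$ (since $P \subset \N \subset \Z$ and $P$ is disjoint from $\Z \setminus P$). Then I would invoke Proposition \ref{proposition: total predictive singular}, which guarantees a $p \in P$ with $\hat\mu(p) \neq 0$. This contradicts the assumption on the support of $\hat\mu$, so $\mu$ cannot be singular.

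There is essentially no obstacle here; the entire content has been packaged into Proposition \ref{proposition: total predictive singular}, and the corollary is just its contrapositive restated in the language of Fourier supports rather than nonvanishing of individual coefficients. The only minor bookkeeping point is to note that the hypothesis $\supp(\hat\mu) \subset \Z \setminus P$ is slightly stronger than, but immediately implies, the negation of the conclusion of the proposition (namely, that $\hat\mu$ vanishes on all of $P$), so the logical implication is clean.
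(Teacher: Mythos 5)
Your proof is correct and is exactly what the paper intends: the paper introduces the corollary with the words ``Rephrasing Proposition \ref{proposition: total predictive singular} we get the following corollary,'' i.e.\ it is just the contrapositive of the final assertion of that proposition, which is precisely your argument. Nothing is missing.
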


\begin{remark}
The assumption that $P$ is a predictive set can be weakened, at least, formally. It is sufficient for the conclusion of Proposition \ref{proposition: total predictive singular} that $P$ is totally predictive for all zero entropy Gaussian processes. However we do not know if there are sets $P$ which are totally predictive for all zero entropy Gaussian processes but not for all zero entropy processes. 

\end{remark}

Keeping the conclusions of Corollary \ref{corollary:singular not outside predictive} in mind, let us recall the following definition introduced by Meyer \cite{MR227697}.

\begin{definition}
A \emph{Riesz set} is a set $Q\subset \Z$ such that all measures $\mu$, satisfying $\supp(\hat\mu)\subset Q$, are absolutely continuous.
\end{definition}
The following questions immediately arise.
\begin{question}\label{question:are totally predictive riesz}
If $P\subset \N$ is totally predictive then is the set $\Z\setminus P$ a Riesz set? If $Q\subset \N$ is a set whose closure in the Bohr topology is a Riesz set then is the set $\N\setminus Q$ a totally predictive set?
\end{question}

We give a partial answer. For this we will need to introduce \emph{the Bohr topology}: This is the topology induced on $\Z$ from the \emph{Bohr compactification of $\Z$}. This in-turn is the Pontryagin dual of $\mathbb T$ endowed with the discrete topology. In more concrete terms, it is the topology on $\Z$ generated by a basis consisting of sets $Q\subset \Z$ of the following type: There exists $\alpha\in \mathbb T^d$ and an open set $U \subset \mathbb T^d$ such that
$$Q=\{i \in \Z~:~i\alpha\in U\}.$$
It follows that the restriction of the topology to $\N$ is generated by the visit-times of toral rotations. 

\begin{theorem}\label{thm:totally predictive open is Riesz}
	Let $P$ be a totally predictive set which is open in the restriction of the Bohr topology to $\N$. Then $\Z\setminus P$ is a Riesz set.
\end{theorem}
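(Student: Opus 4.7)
Let $\mu$ be a finite complex measure on $\mathbb T$ with $\hat\mu(p)=0$ for every $p\in P$; the goal is to conclude $\mu$ is absolutely continuous. Apply the Lebesgue decomposition $\mu=\mu_{ac}+\mu_s$; the relation $\hat\mu_s(p)=-\hat\mu_{ac}(p)$ on $P$, combined with the Riemann--Lebesgue lemma, gives $\hat\mu_s(p)\to 0$ as $p\to\infty$ in $P$. So it suffices to rule out the possibility $\mu_s\ne 0$, which I would do by contradiction.

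Suppose $\mu_s\ne 0$. Proposition \ref{proposition: total predictive singular}, applied to the singular positive measure $|\mu_s|$, yields $L^2(|\mu_s|)=\overline{\mathrm{Span}\{e^{2\pi i p x}:p\in P\}}$, hence also, by complex conjugation, $L^2(|\mu_s|)=\overline{\mathrm{Span}\{e^{-2\pi i p x}:p\in P\}}$. Thus for every $n\in\Z$ one can find trigonometric sums $q_k=\sum_j a^{(k)}_j e^{-2\pi i p_{k,j}x}$ with $p_{k,j}\in P$ converging to $e^{2\pi i n x}$ in $L^2(|\mu_s|)$; integrating against $\mu_s$ gives
\[
\sum_j a^{(k)}_j \hat\mu_s(p_{k,j})\longrightarrow \hat\mu_s(-n).
\]
If these $q_k$ can be chosen so that $\min_j p_{k,j}\to\infty$ as $k\to\infty$ while $\sum_j|a^{(k)}_j|$ stays bounded in $k$, then the displayed limit together with $\hat\mu_s(p)\to 0$ along $P$ forces $\hat\mu_s(-n)=0$ for every $n\in\Z$, so $\mu_s=0$, the desired contradiction.

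The role of Bohr-openness is exactly to enable this construction. By hypothesis $P$ contains a Bohr-basic set $P_0=\{n\in\N:n\alpha\in V\}$ for some $\alpha\in\mathbb T^d$ and some symmetric open $V\ni 0$ in $\mathbb T^d$. Weyl equidistribution yields a sequence $p_k^\ast\in P_0$ with $p_k^\ast\to\infty$ and $p_k^\ast\alpha\to 0$; multiplying a fixed approximant of $e^{2\pi i n x}$ by $e^{-2\pi i p_k^\ast x}$ translates its frequencies into $P_0+p_k^\ast$, which by the translation-invariance of Bohr neighborhoods of $0$ lies inside $P$ for all large $k$, while the $\ell^1$-norm of coefficients is preserved. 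Compensating the target by multiplying by $e^{2\pi i p_k^\ast x}$ costs only a factor that tends to $1$ in $L^2(|\mu_s|)$ as $p_k^\ast\alpha\to 0$, at least after a Fej\'er-type smoothing on $\mathbb T^d$ built from a smooth bump supported in $V$ and pulled back via $n\mapsto n\alpha$.

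The main obstacle is the quantitative matching of this frequency-translation argument with the $L^2(|\mu_s|)$-density from total predictivity: density alone supplies no control on the $\ell^1$-coefficient norm nor on the frequency support, and Bohr-openness alone supplies shift-room but no density. It is their combination that has to produce a bounded-$\ell^1$, frequencies-to-infinity approximant of $e^{2\pi i n x}$ in $L^2(|\mu_s|)$ from exponentials in $\{e^{-2\pi ipx}:p\in P\}$, and I expect this to be the technical heart of the proof.
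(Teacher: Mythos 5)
There is a genuine gap, and it is the one you flag yourself at the end: the passage from $L^2(|\mu_s|)$-density of $\{e^{2\pi i p x}:p\in P\}$ to approximants with bounded $\ell^1$ coefficient norm and frequencies tending to infinity. This is not a technical matter that a Fej\'er smoothing will fix; such approximants simply do not exist in general, and their existence is essentially equivalent to the conclusion you are trying to reach. Indeed, your own pairing computation shows that if $q_k=\sum_j a_j^{(k)}e^{-2\pi i p_{k,j}x}\to e^{2\pi i n x}$ in $L^2(|\mu_s|)$ with $\sum_j|a_j^{(k)}|\le C$ and $\min_j p_{k,j}\to\infty$, and if $\hat\mu_s(p)\to 0$ along $P$, then $\hat\mu_s(-n)=0$; contrapositively, whenever $\hat\mu_s(-n)\neq 0$ no such approximants exist, so producing them presupposes what you want to prove. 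A concrete illustration that density gives no $\ell^1$ control: take $P=\N$ and let $\nu$ be a singular Rajchman probability measure. Szeg\"{o}'s theorem gives density of $\{e^{2\pi i n x}:n\in\N\}$ in $L^2(\nu)$, yet no bounded-$\ell^1$, frequencies-to-infinity approximants of the constant function $1$ can exist, since they would force $\nu(\mathbb T)=0$. There is also a second, independent error: openness of $P$ in the Bohr topology restricted to $\N$ gives, for each $p_0\in P$, a basic neighbourhood of $p_0$ contained in $P$; it does not give $P\supset\{n\in\N:n\alpha\in V\}$ for a neighbourhood $V$ of $0$. For instance $\{n:n\alpha\in(1/4,1/2)\}$ is Bohr-open in $\N$ and contains no Bohr neighbourhood of $0$.

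The paper does not attempt to manufacture these approximants; it quotes Meyer's localization theorem (Proposition \ref{prop: singular component support}), which asserts that $\supp(\hat{\mu}_s)$ lies in the Bohr closure of $\supp(\hat{\mu})$. This is exactly the statement your translation argument is groping for, and it is a genuine theorem, not a consequence of $L^2$-density plus Riemann--Lebesgue. Given it, openness of $P$ forces $\supp(\hat{\mu}_s)$ to avoid $P$, and Corollary \ref{corollary:singular not outside predictive} (the point where total predictivity enters, just as in your appeal to Proposition \ref{proposition: total predictive singular}) yields $\mu_s=0$. The negative frequencies, which no Bohr-topological argument can control because $\Z\setminus\N$ is Bohr-dense in $\Z$, are then handled by the F. and M. Riesz theorem (Theorem \ref{theorem:Riesz}) together with Proposition \ref{prop: closed riesz}. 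If you wish to keep your direct route, the missing ingredient you must either quote or reprove is Meyer's theorem itself.
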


Given a finite measure $\mu$ on the circle we denote its singular component by $\mu_s$. To prove the proposition we will need the following facts: Firstly we identify the support of the Fourier coefficients of the singular components of finite measures.

\begin{proposition}(\cite[Theorem 5]{MR227697})\label{prop: singular component support}
If $\mu$ is a finite measure on $\mathbb T$ then $\supp(\hat{\mu}_s)$ is contained in the Bohr closure of $\supp(\hat{\mu})$.
\end{proposition}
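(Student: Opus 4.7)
The plan is to prove the contrapositive: assuming $n_0 \in \Z$ does not lie in the Bohr closure of $A := \supp(\hat{\mu})$, show that $\hat{\mu}_s(n_0) = 0$. Multiplying $\mu$ by the unimodular factor $e^{-2\pi i n_0 x}$ shifts every Fourier coefficient by $n_0$ and, being continuous of unit modulus, preserves the Lebesgue decomposition $\mu = \mu_a + \mu_s$; this reduces matters to $n_0 = 0$. The Bohr hypothesis, unpacked via the description of the topology recalled earlier in the paper, then reads: there exist $\alpha_1,\dots,\alpha_k \in \mathbb T$ and $\varepsilon > 0$ with $\max_j \|n\alpha_j\| \geq \varepsilon$ for every $n \in A$, where $\|\cdot\|$ denotes distance to the nearest integer.

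The heart of the proof is the construction of a finite measure $\tau \in M(\mathbb T)$ whose Fourier transform vanishes on $A$ but is nonzero at $0$. I would choose a nonnegative $C^\infty$ function $\psi : \mathbb T^k \to [0,\infty)$ with $\psi(0) > 0$ and $\psi \equiv 0$ outside $\{y : \max_j \|y_j\| < \varepsilon\}$; smoothness guarantees absolutely summable Fourier coefficients, so one can write $\psi(y) = \sum_{m \in \Z^k} c_m e^{2\pi i \langle m, y\rangle}$ with $\sum_m |c_m| < \infty$. Define
$\tau := \sum_{m \in \Z^k} c_m \, \delta_{-\langle m, \alpha\rangle}$,
the sum converging in total variation on $\mathbb T$. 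A direct computation gives $\hat\tau(n) = \psi(n\alpha)$, so that $\hat\tau(0) = \psi(0) > 0$ while $\hat\tau(n) = 0$ for all $n \in A$.

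Convolving, $\widehat{\mu * \tau}(n) = \hat{\mu}(n)\hat\tau(n) = 0$ everywhere --- on $\Z\setminus A$ because $\hat\mu$ vanishes there, on $A$ because $\hat\tau$ does --- whence $\mu * \tau = 0$. Split this as $\mu_a * \tau + \mu_s * \tau$: the first summand is absolutely continuous because convolving an $L^1$ density with a finite measure stays in $L^1$, while the second is singular because $\mu_s$ is concentrated on a Lebesgue-null set $E$ and so $\mu_s * \tau$ is concentrated on the countable union $\bigcup_m (E - \langle m, \alpha\rangle)$, still null. Uniqueness of the Lebesgue decomposition forces $\mu_s * \tau = 0$; reading the zeroth Fourier coefficient gives $\hat{\mu}_s(0)\hat\tau(0) = 0$, and dividing by $\hat\tau(0) > 0$ yields $\hat{\mu}_s(0) = 0$, i.e.\ $\hat{\mu}_s(n_0) = 0$ after undoing the reduction.

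The one delicate point is producing a Fourier multiplier that is simultaneously (i) supported in a Bohr-small neighbourhood of $0$ and (ii) genuinely the Fourier transform of a measure in $M(\mathbb T)$, rather than merely a pseudomeasure in some larger Banach algebra. A merely continuous bump on $\mathbb T^k$ would accomplish (i) but not (ii); using a $C^\infty$ bump is exactly what furnishes the absolutely summable expansion of $\psi$, which in turn lets $\tau$ be interpreted as a bona fide discrete element of $M(\mathbb T)$ so that the identity $\mu * \tau = 0$ can be asserted at the level of measures and the Lebesgue-decomposition argument applied coordinate by coordinate.
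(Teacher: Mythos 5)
The paper does not actually prove this proposition; it is quoted from Meyer \cite[Theorem 5]{MR227697} without proof, so there is no internal argument to compare against. Your proof is correct and self-contained, and it is essentially the classical argument for Meyer's theorem. The individual steps all check out: the reduction to $n_0=0$ via the unimodular density $e^{-2\pi i n_0 x}$ (which shifts the spectrum and respects the Lebesgue decomposition); the unpacking of the Bohr hypothesis into $\max_j\|n\alpha_j\|\ge\varepsilon$ for all $n\in\supp(\hat\mu)$; the discrete measure $\tau=\sum_m c_m\,\delta_{-\langle m,\alpha\rangle}$ with $\hat\tau(n)=\psi(n\alpha)$, where the smoothness of the bump $\psi$ is precisely what yields $\sum_m|c_m|<\infty$ and hence makes $\tau$ a genuine finite measure rather than a pseudomeasure; and the endgame, in which $\widehat{\mu*\tau}\equiv 0$ forces $\mu*\tau=0$, while $\mu_a*\tau$ is absolutely continuous and $\mu_s*\tau$ is concentrated on a countable union of translates of a null set, so uniqueness of the Lebesgue decomposition gives $\hat\mu_s(0)\,\hat\tau(0)=0$ and therefore $\hat\mu_s(n_0)=0$. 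You correctly identify the one delicate point (membership of the multiplier in $M(\mathbb T)$) and resolve it.
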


The second fact that we need is that the union of a Riesz set which is closed and a Riesz set is still a Riesz set.

\begin{proposition}(\cite[Theorem 2]{MR227697}) \label{prop: closed riesz} Let $P, Q\subset \Z$ be Riesz sets and $P$ be closed in the Bohr topology. Then $P\cup Q$ is a Riesz set.
\end{proposition}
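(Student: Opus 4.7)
The plan is to take an arbitrary finite complex measure $\mu$ on $\mathbb T$ with $\supp(\hat\mu)\subset P\cup Q$ and show $\mu$ is absolutely continuous. Writing the Lebesgue decomposition $\mu=\mu_s+\mu_a$, it suffices to prove $\mu_s=0$. The first step is to apply Proposition \ref{prop: singular component support} to obtain $\supp(\hat{\mu_s})\subset \overline{P\cup Q}^{Bohr}$; since Bohr closure commutes with finite unions and $P$ is Bohr-closed by hypothesis, this reduces to $\supp(\hat{\mu_s})\subset P\cup \overline{Q}^{Bohr}$. So the singular part sees $Q$ only through its Bohr closure, which is why any naive decomposition of $\mu$ by Fourier support has to be handled carefully.

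The core of the proof is to split $\mu$, modulo absolutely continuous measures, into a piece whose Fourier support sits inside $P$ and a piece whose Fourier support sits inside $Q$. Concretely, I would aim to construct a finite measure $\lambda$ with $\supp(\hat\lambda)\subset P$ and $\hat\lambda(p)=\hat\mu(p)$ for every $p\in P$. Granting such a $\lambda$, the Riesz hypothesis on $P$ immediately gives that $\lambda$ is absolutely continuous. Meanwhile the difference $\mu-\lambda$ has Fourier transform vanishing on $P$ (by construction) and on $\Z\setminus(P\cup Q)$ (since $\mu$ already vanishes there and so does $\lambda$), hence $\supp(\widehat{\mu-\lambda})\subset Q$, and the Riesz hypothesis on $Q$ gives that $\mu-\lambda$ is absolutely continuous. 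Thus $\mu=\lambda+(\mu-\lambda)$ is absolutely continuous.

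To produce $\lambda$, the natural attempt is a weak-$\star$ limit of convolutions $\phi_\alpha * \mu$ along a net of trigonometric polynomials $\phi_\alpha$ whose Fourier transforms $\hat\phi_\alpha:\Z\to\C$ converge pointwise to $1_P$. Such polynomials exist because $P$ is Bohr-closed: the almost periodic functions are dense in $C(b\Z)$, so one can separate the closed set $\overline{P}^{b\Z}$ from any prescribed finite set in its complement to build approximations of $1_P$ on arbitrarily large finite portions of $\Z$. If these convolutions form a uniformly bounded family in $M(\mathbb T)$, Banach--Alaoglu furnishes a subnet with weak-$\star$ limit $\lambda$, and a direct computation gives $\hat\lambda(n)=1_P(n)\hat\mu(n)$ as required.

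The hard part is obtaining the uniform total-variation bound on $\{\phi_\alpha*\mu\}$. By Cohen's idempotent theorem, $1_P$ is not generally a Fourier--Stieltjes multiplier unless $P$ lies in the coset ring of $\Z$, so a bound $\|\phi_\alpha\|_{L^1}\le C$ is not automatic and the naive spectral projection does not produce a bona fide measure for an arbitrary Bohr-closed $P$. I expect that Meyer's actual proof bypasses this obstruction by iterating Proposition \ref{prop: singular component support}: one extracts at each stage an absolutely continuous portion using the two Riesz hypotheses in turn, pushes the residual singular mass into successively smaller Bohr-closures, and uses the Bohr-closedness of $P$ to close the recursion. Making this iteration rigorous, rather than invoking a single spectral projection, is the main technical content I would have to supply in a full write-up.
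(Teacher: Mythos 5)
The paper itself gives no proof of this proposition --- it is imported verbatim from Meyer \cite{MR227697} --- so your attempt has to be judged on its own, and it has a genuine gap exactly where you flag one. The whole argument hangs on producing a finite measure $\lambda$ with $\hat\lambda=1_P\,\hat\mu$, and as you note this is precisely what fails for a general Bohr-closed $P$: $1_P$ need not be a Fourier--Stieltjes multiplier (Cohen's idempotent theorem), your approximating convolutions $\phi_\alpha*\mu$ carry no uniform total-variation bound, and the final paragraph replaces the missing construction with a guess about what Meyer ``probably'' does. The one reduction you do complete, $\supp(\hat{\mu_s})\subset P\cup\overline{Q}^{Bohr}$, is never used afterwards and cannot close the argument on its own, since the Bohr closure of $Q$ need not be Riesz. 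So the proposal is an accurate diagnosis of the difficulty, but not a proof.

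The actual argument avoids any global spectral projection by localizing at a single frequency. Fix $\mu$ with $\supp(\hat\mu)\subset P\cup Q$ and let $n_0\in\Z\setminus P$. Since $\Z\setminus P$ is Bohr-open, there are $\alpha_1,\dots,\alpha_d\in\mathbb T$ and $\epsilon>0$ such that $(n_0+W)\cap P=\emptyset$ for $W=\{m:\|m\alpha_j\|<\epsilon,\ j=1,\dots,d\}$. Transplanting a positive-definite bump supported in $(-\epsilon,\epsilon)^d$ to the orbit $\{k_1\alpha_1+\dots+k_d\alpha_d\}$ yields a \emph{discrete} probability measure $\lambda$ on $\mathbb T$ with $\hat\lambda$ supported in $W$ and $\hat\lambda(0)=1$. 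Then $\mu*(e^{2\pi i n_0 x}\lambda)$ has Fourier support in $(P\cup Q)\cap(n_0+W)\subset Q$, hence is absolutely continuous because $Q$ is Riesz. Convolution with a discrete measure sends $L^1$ to $L^1$ and singular measures to singular measures, so the singular part of this convolution is $\mu_s*(e^{2\pi i n_0x}\lambda)=0$, and evaluating Fourier coefficients at $n_0$ gives $\hat{\mu_s}(n_0)=\hat{\mu_s}(n_0)\hat\lambda(0)=0$. Since $n_0\in\Z\setminus P$ was arbitrary, $\supp(\hat{\mu_s})\subset P$, and the Riesz hypothesis on $P$ forces $\mu_s=0$. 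Note that this uses the Riesz property of $Q$ once per frequency, and the Bohr-closedness and Riesz property of $P$ only at the very end; no measure with Fourier transform $1_P\hat\mu$ is ever needed.
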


Finally we will recall a famous theorem of Frigyes and Marcel Riesz which inspired the study of Riesz sets. A short proof can be found in \cite{oksendal1971short}.
\begin{theorem}\label{theorem:Riesz}
The set $\Z\setminus \N$ is a Riesz set.
\end{theorem}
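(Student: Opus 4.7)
The plan is to prove the classical F.\ and M.\ Riesz theorem in the form stated: every finite complex Borel measure $\mu$ on $\mathbb T$ with $\hat\mu(n)=0$ for all $n\in\N$ is absolutely continuous with respect to Lebesgue measure. I will outline the classical Hardy-space proof, since it makes transparent why the one-sided Fourier vanishing forces regularity; the excerpt already points the reader to Oksendal's short probabilistic proof via Brownian motion as a simpler alternative.

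First I would form the Poisson integral $u(z)$ of $\mu$ on the open unit disc. Because $\hat\mu(n)=0$ for every $n\ge 1$, only non-positive Fourier frequencies contribute to $u$, and after complex conjugation the resulting function $G(z):=\overline{u(\bar z)}$ is holomorphic on the disc. Positivity of the Poisson kernel, applied to the total variation $|\mu|$, gives the uniform bound
$$\sup_{0\le r<1}\int_{0}^{1}|G(re^{2\pi it})|\,dt\ \leq\ \|\mu\|,$$
placing $G$ in the Hardy space $H^1$.

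The central step is to factor $G=B\cdot H$, where $B$ is the Blaschke product formed from the zeros $\{z_k\}$ of $G$ and $H$ is non-vanishing on the disc. Jensen's formula, together with $G\in H^1$, forces $\sum_k(1-|z_k|)<\infty$, so $B$ converges to a bounded holomorphic function with $|B|\le 1$; then $H=G/B$ remains in $H^1$ with the same norm, and since $H$ is non-vanishing on a simply connected domain it admits a holomorphic square root $H_{1/2}$ with $H_{1/2}^2=H$. The identity
$$\int_{0}^{1}|H_{1/2}(re^{2\pi it})|^2\,dt\ =\ \int_{0}^{1}|H(re^{2\pi it})|\,dt\ \leq\ \|\mu\|$$
exhibits $H_{1/2}$ as an element of $H^2$, and the standard $H^2$ theory (via orthogonality of $\{z^n\}_{n\ge 0}$ in $L^2(\mathbb T)$) supplies non-tangential boundary values $H_{1/2}^*\in L^2(\mathbb T)$ with $H_{1/2}$ equal to the Poisson integral of $H_{1/2}^*$. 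Squaring, $H$ is the Poisson integral of $(H_{1/2}^*)^2\in L^1(\mathbb T)$.

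To conclude, $G=BH$ has genuine $L^1$ boundary values $B^*(H_{1/2}^*)^2$ and equals the Poisson integral of that $L^1$ function; unwinding the conjugation, $\mu$ itself is absolutely continuous. The main obstacle, and the genuine analytic content of the theorem, lies in the factorisation: converting an $L^1$-norm bound on circles of radius $r\to 1$ into \emph{actual} $L^1$ boundary values has no softer shortcut at this level of generality. Any attempt to extract the conclusion from weak-$\star$ compactness alone only reproduces $\mu$ as a finite measure; the Blaschke-and-square-root trick is what elevates the one-sided Fourier hypothesis to absolute continuity.
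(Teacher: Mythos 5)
The paper offers no proof of this statement at all: it is the classical F.~and M.~Riesz theorem, recorded with a pointer to a short published proof that runs through Brownian motion and optional stopping rather than through function theory. Your outline is the standard complex-analytic proof (Poisson integral, $H^1$ bound, Blaschke factorisation, square-root trick into $H^2$) and it is essentially correct; you also correctly identify the factorisation step as the place where the real analytic work happens. Two small points are worth tightening. First, with the paper's convention $\hat\mu(n)=\int e^{-2\pi i nt}\,d\mu(t)$ and $\hat\mu(n)=0$ for $n\ge 1$, the Poisson integral satisfies $u(z)=\sum_{m\ge 0}\hat\mu(-m)\bar z^{\,m}$, so $z\mapsto u(\bar z)$ is already holomorphic and $\overline{u(\bar z)}$ is anti-holomorphic; your conjugation is the wrong way around, but harmlessly so, since either choice produces a function with the stated $H^1$ bound. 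Second, the final sentence compresses a step that should be said explicitly: having shown that $G(re^{2\pi i t})$ converges in $L^1$ to a boundary function $G^*=B^*(H_{1/2}^*)^2$, you must combine this with the weak-$*$ convergence of the measures $G(re^{2\pi i t})\,dt$ to (the transformed) $\mu$ to conclude $d\mu=G^*\,dt$; uniqueness of weak-$*$ limits is what actually delivers absolute continuity. Neither of these is a gap. Compared with the probabilistic argument the paper cites, your route is longer but entirely self-contained within classical Hardy-space theory, and it makes visible why the one-sided spectral hypothesis is the engine of the result.
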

We remark that the terminology of Riesz set was introduced much later by Meyer in \cite{MR227697}.
\begin{proof}[Proof of Theorem \ref{thm:totally predictive open is Riesz}]
	Let $\mu$ be a finite measure on $\mathbb{T} $ such that $\supp(\hat\mu)\subset \Z\setminus P$. From Proposition \ref{prop: singular component support} it follows that $\supp(\hat{\mu}_s)$ is contained in the closure of $\Z\setminus P$. Since $P$ is open in the restriction of the Bohr topology to $\N$ it follows that
	$$\supp(\hat{\mu}_s)\subset \Z\setminus P.$$
	Since $P$ is totally predictive by Corollary \ref{corollary:singular not outside predictive} we have that $\mu_s=0$. Thus $\mu$ is absolutely continuous and the closure of $\N\setminus P$ is a Riesz set.
	It follows thus from Theorem \ref{theorem:Riesz} and Proposition \ref{prop: closed riesz} that $(\N\setminus P)\cup (\Z\setminus \N)=\Z\setminus P$ is a Riesz set.
	\end{proof}

While we do not know the answer to Question \ref{question:are totally predictive riesz} we have many examples of sets which are both Riesz and complements of totally predictive sets.

We will mention a few of them. 

A \emph{lacunary set $\Lambda\subset \N$ with a constant $\rho>1$} is a set $\Lambda=\{\lambda_1, \lambda_2, \lambda_3, \ldots, \}$ such that $\frac{\lambda_{i+1}}{\lambda_i}>\rho$. In \cite[Page 226]{MR0116177}, Rudin proved that the union of lacunary sets and $\Z\setminus \N$ are Riesz sets (the terminology was introduced later by Meyer).

Given a sequence of integers $n_i$ we write 
$$\text{IP}(n_1, n_2, \ldots):=\left\{\sum_{i=\N}\epsilon_i n_i \text{ where }\epsilon_i\in \{0,1\}\right\}.$$
In \cite{MR227697}, Meyer proved that $Q\cup (-\N)$ are Riesz where $Q$ are the sets:

\begin{enumerate}[(i)]
	\item Finite union of lacunary sets (simplifying Rudin's argument from \cite{MR0116177}).
	\item $\{n^2~:~n\in \N\}$.
	\item The set of prime numbers.
	\item Sets $Q\subset \N$ for which there exist sequences of natural numbers $l_k, n_k$ where $n_k$ is an increasing sequence, $n_k-l_k\to \infty$ and $Q\setminus \cup_{t\in \N}[tn_k, tn_k+l_k]$ is finite for all $k \in \N$. 
	\item $\text{IP}(n_1, n_2, \ldots)$ where $n_i\in \N$ is a sequence such that $\frac{n_{i+1}}{n_i}\in \N$ is greater than or equal to $3$.
\end{enumerate}

We will prove that the complement of these sets are totally predictive using essentially the same ideas as in \cite{MR227697}. Our ideas also imply that for these sets $Q$ there exists a finite set $S$ such that $\Z\setminus (Q\cup S)$ is open in the Bohr topology. It follows from Theorem \ref{thm:totally predictive open is Riesz} that they are Riesz sets. For more examples of Riesz sets we encourage the reader to look in \cite{MR941245}.

\section{Some Examples of Totally Predictive Sets}\label{section: some examples}
\subsection{The union of finitely many lacunary sets}
We begin by studying the union of lacunary sets. The following argument is based on one by Katznelson \cite[Section 1.2]{MR1832446}. 

\begin{proposition}\label{prop:lacunary_predictive}
	Let $\Lambda$ be a finite union of lacunary sets. Then $\N\setminus \Lambda$ is totally predictive. 
\end{proposition}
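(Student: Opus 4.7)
The plan is to parallel Meyer's proof that $\Lambda \cup (-\N)$ is a Riesz set~\cite{MR227697}, transposed to the language of stationary processes. Two standard reductions simplify matters. First, partitioning a lacunary set of ratio $\rho>1$ along residue classes modulo $k$ gives $k$ sub-lacunary sets each of ratio at least $\rho^k$, so we may assume $\rho \geq 3$. Second, since $\Lambda$ is a finite union of such lacunary sets, by induction on the number of components (handling each in turn), it suffices to prove the single-set version: for $\Lambda$ a single lacunary set of ratio $\geq 3$, $\N \setminus \Lambda$ is totally predictive.

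Fix a zero entropy process $X_\Z$ and $q \in \Z$. Since $\N$ is totally predictive, $X_q \in \sigma(X_{-n} : n \in \N)$, so it suffices to show each $X_{-\lambda_j}$ (with $\lambda_j \in \Lambda$) is measurable with respect to $\sigma(X_{-p} : p \in \N \setminus \Lambda)$. By stationarity, total predictivity of $\N$ applied to the shifted process $(X_{n-\lambda_j})_{n\in\Z}$ gives $X_{-\lambda_j} \in \sigma(X_{-\lambda_j - n} : n \in \N)$. Among these indices, the only ones lying in $-\Lambda$ are of the form $-\lambda_k$ for $k > j$ (corresponding to $n = \lambda_k - \lambda_j$, and for $\rho \geq 3$ one verifies $\lambda_k - \lambda_j \in (\lambda_{k-1}, \lambda_k) \subset \N \setminus \Lambda$). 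Iterated substitution therefore yields, for every $N \in \N$,
\[
X_{-\lambda_j} \;\in\; \sigma(X_{-p} : p \in \N \setminus \Lambda) \;\vee\; \sigma(X_{-\lambda_k} : k \geq j + N).
\]

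The crux is to absorb the residual tail $\bigcap_N \sigma(X_{-\lambda_k} : k \geq j + N)$ into $\sigma(X_{-p} : p \in \N \setminus \Lambda)$. This is the step where Meyer's harmonic-analytic argument enters. The gaps in $\Lambda$ guarantee that $\N \setminus \Lambda$ contains each interval $(\lambda_k, \lambda_{k+1})$, whose length $\geq (\rho-1)\lambda_k$ grows geometrically in $k$; zero entropy provides the local estimate $H(X_0 \mid X_{-1},\ldots,X_{-L}) \to 0$ as $L \to \infty$, so each $X_{-\lambda_k}$ is \emph{approximately} $\sigma(X_{-p}: p \in \N \setminus \Lambda)$-measurable with an error tending to zero as $k \to \infty$. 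Upgrading this sequence of approximate measurabilities into exact measurability is the main obstacle, and is achieved — as in Meyer — by invoking the Sidonicity of $\Lambda$ (equivalently, the $L^\infty$-control provided by Riesz products on lacunary sets of ratio $\geq 3$), which prevents the approximation errors from conspiring to produce non-trivial information in the tail, exactly as Meyer rules out a non-trivial singular part for measures whose Fourier spectrum lies in $\Lambda$. Once this is established for $q = 0$, applying the same argument to the shifted process $(X_{n+q})_n$ handles general $q \in \Z$ and completes the proof.
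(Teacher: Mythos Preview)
Your argument has two genuine gaps.

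First, the reduction by induction on the number of lacunary components is unjustified. Knowing that $\N\setminus\Lambda_1$ and $\N\setminus\Lambda_2$ are each totally predictive does not obviously yield that their intersection $\N\setminus(\Lambda_1\cup\Lambda_2)$ is; this is exactly the filter question the paper leaves open. You would need to argue that when recovering $X_{-\lambda}$ for $\lambda\in\Lambda_1$ you can avoid using any $X_{-\lambda'}$ with $\lambda'\in\Lambda_2$, and your iterated-substitution scheme does not do this.

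Second, and more seriously, the ``crux'' step is not a proof. You correctly obtain
\[
X_{-\lambda_j}\in\sigma\bigl(X_{-p}:p\in\N\setminus\Lambda\bigr)\vee\sigma\bigl(X_{-\lambda_k}:k\ge j+N\bigr)
\]
for every $N$, but the tail $\bigcap_N\sigma(X_{-\lambda_k}:k\ge j+N)$ is in general highly non-trivial for zero-entropy processes (think of discrete-spectrum systems, where it equals the full $\sigma$-algebra). Saying that Sidonicity ``prevents the approximation errors from conspiring'' is an analogy, not an argument: Meyer's Riesz-product estimate controls Fourier coefficients of \emph{measures on $\mathbb{T}$}, and there is no evident mechanism by which it transfers to measurability statements for arbitrary finite-valued zero-entropy processes. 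Indeed, whether the Riesz-set property implies total predictivity is precisely one of the open questions raised later in the paper.

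The paper's proof is entirely different and avoids both issues. It shows directly that for any finite union $\Lambda$ of lacunary sets there exist $d\in\N$, $\alpha\in\mathbb{T}^d$, and a neighbourhood $U\ni 0$ such that the visit-time set $N(0,U)=\{n:\,n\alpha\in U\}$ is contained in $\N\setminus\Lambda$; since visit-times of compact group rotations are predictive (Corollary~\ref{corollary:predictive_rotation}), this gives predictivity of $\N\setminus\Lambda$, and the observation that translates of $\Lambda$ are again finite unions of lacunary sets upgrades this to total predictivity. No tail argument or induction on components is needed.
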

In the following we will denote the identity of the group $\mathbb T^d$ by $0$.
\begin{proof} Observe that if $\Lambda$ is a finite union of lacunary sets then so is $k+\Lambda$ for all $k \in \N$ (possibly for different constants). Thus it is sufficient to prove that the complement of a finite union of lacunary sets is predictive.

	To see this notice that it will be sufficient to prove that there exists $d\in \N$, $\alpha\in \mathbb T^d$ and a neighbourhood $U\subset \mathbb T^d$ containing $0$ such that $N(0, U)\subset \N\setminus \Lambda$. By Corollary \ref{corollary:predictive_rotation} this would imply that $\N\setminus \Lambda$ is predictive. It was in fact proved in \cite{MR540398, MR612195} and also in \cite{MR1832446} that $d$ can be taken to be $1$ for a lacunary set and the Hausdorff dimension of such $\alpha$ is $1$ but for our purposes this weaker (and simpler) result is sufficient. 
	
	Let us assume for now that $\Lambda$ is a lacunary set with a constant $\rho>4$ and $d=1$. Let 
	$$A_i:=\{\alpha\in \mathbb{T}~:~\lambda_i \alpha \notin (-1/4, 1/4)\}.$$
	Each $A_i$ is a union of closed intervals of length $1/2\lambda_i$ centred at $(2k+1)/2\lambda_i$ for $k=0,1, \ldots, \lambda_i-1$. Since $2/\lambda_{i+1}<1/2\lambda_i$ we get that each interval in $A_i$ contains at least two $\lambda_{i+1}$-roots of unity and at least one interval from $A_{i+1}$. Thus $\cap A_i$ is non-empty and $\alpha\in \cap_i A_i$ satisfies the conclusion of the theorem.
	
	Now let $\Lambda$ be a finite union of lacunary sets with constants $\rho_1, \rho_2, \ldots, \rho_n$. Take $d$ large enough such that $\rho_i^d>4$ for all $1\leq i \leq n$. Arranging elements of each lacunary set in increasing order $\lambda_1, \lambda_2, \ldots$, the sets $\{\lambda_{i+kd}~:~ k\in \N\}$ are lacunary with constant greater than $4$. Thus we can write $\Lambda$ as a union of lacunary sets $\cup_{i=1}^{nd} \Lambda_i$ such that each $\Lambda_i$ is lacunary with constant greater than $4$. For each $1\leq i \leq nd$ there exists $\alpha_i\in \mathbb T$ such that $\lambda \alpha_i \notin (-1/4, 1/4)$ for all $\lambda\in \Lambda_i$. Therefore if
	$\mathbf\alpha= (\alpha_1, \alpha_2, \ldots, \alpha_{nd})$ then $$\lambda \mathbf \alpha\notin (-1/4, 1/4)^{nd} $$ for all $\lambda\in \Lambda$
	and
	$$N(0, (-1/4, 1/4)^{nd})\subset \N\setminus \Lambda.$$
	This completes the proof.\end{proof}
We can now extract from the proof above its main principle.
\begin{proposition}\label{prop:predict using AP}
	Let $R \subset \Z$ and $P \subset \N$ be sets satisfying the following.
	\begin{enumerate}[(i)]
		\item All zero entropy processes $X_\Z$ are measurable with respect to $X_{R}$.
		\item For all $r\in R$ there exists a predictive set $P(r)\subset \N$ such that $r+P(r)\subset P$.
	\end{enumerate}
	Then $P$ is totally predictive.
\end{proposition}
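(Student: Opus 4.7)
The plan is to combine the two hypotheses in two short steps: first use (ii) to show every $X_r$ with $r \in R$ is $X_P$-measurable, and then use (i) to bootstrap from $R$ to all of $\Z$.

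Fix a zero entropy process $X_\Z$ and $r \in R$. Consider the time-shifted process $Y_\Z$ defined by $Y_i := X_{i+r}$. By stationarity, $Y_\Z$ is again a zero entropy process. Let $P(r) \subset \N$ be the predictive set furnished by condition (ii), so that $r + P(r) \subset P$. Applying the predictivity of $P(r)$ to $Y_\Z$, we obtain that $Y_0 = X_r$ is measurable with respect to the sigma-algebra $Y_{P(r)} = X_{r + P(r)}$. Since $r + P(r) \subset P$, this sigma-algebra is contained in $X_P$, so $X_r$ is $X_P$-measurable. As $r \in R$ was arbitrary, this yields the inclusion of sigma-algebras $X_R \subset X_P$.

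Now we invoke condition (i) on the given zero entropy process $X_\Z$: for every $n \in \Z$, $X_n$ is $X_R$-measurable and hence $X_P$-measurable. This is exactly the statement that $P$ is totally predictive.

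The argument is essentially formal and I do not expect any substantive obstacle. The only points to verify carefully are that shifting a stationary zero entropy process yields a stationary zero entropy process (immediate from stationarity, so that the predictivity of $P(r)$ is legitimately applicable to $Y_\Z$), and that the set-theoretic inclusion $r + P(r) \subset P$ does translate into the corresponding inclusion of sigma-algebras. Both are routine bookkeeping.
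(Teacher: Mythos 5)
Your proof is correct and follows the same route as the paper: apply the predictivity of $P(r)$ to the time-shifted (hence still zero entropy) process to see that each $X_r$, $r\in R$, is $X_{r+P(r)}$-measurable and thus $X_P$-measurable, then use hypothesis (i) to pass from $X_R$ to the whole process. The paper's version is merely terser, leaving the shift argument implicit.
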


\begin{proof}
	Let $X_\Z$ be a zero entropy process. Since $P(r)$ is predictive for all $r\in R$ we have that $X_r$ is measurable with respect to $X_{r+ P(r)}$. Thus $X_R$, and hence, $X_\Z$ is measurable with respect to $X_P$. Thus $P$ is totally predictive. 
\end{proof}

Since $k \N$ is a predictive set for all $k \in \N$ we get the following corollary. We will use it repeatedly in our constructions.

\begin{corollary}\label{cor:predict using AP}
Let $R \subset \Z$ and $P \subset \N$ be sets satisfying the following.
\begin{enumerate}[(i)]
	\item All zero entropy processes $X_\Z$ are measurable with respect to $X_{R}$.
	\item For all $r\in R$ there exists $a_r\in \Z\setminus \{0\}$ such that $r+\N a_r \subset P$. 
\end{enumerate}
Then $P$ is totally predictive.
\end{corollary}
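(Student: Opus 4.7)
The plan is to reduce immediately to Proposition \ref{prop:predict using AP}: for each $r \in R$ I will exhibit a predictive set $P(r) \subset \N$ with $r + P(r) \subset P$, at which point the corollary is just a specialization.

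The key input is the elementary observation made right after the definition of predictive: for every positive integer $k$, the arithmetic progression $k\N$ is itself a predictive set, because the restriction of a zero entropy process to indices in $k\Z$ is again a zero entropy process. Thus, when $a_r > 0$, I simply set $P(r) := a_r \N$; it is a subset of $\N$, it is predictive, and hypothesis (ii) gives $r + P(r) = r + \N a_r \subset P$. Proposition \ref{prop:predict using AP} then applies directly.

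If $a_r < 0$, the set $\N a_r$ lies in $-\N$ and one cannot invoke Proposition \ref{prop:predict using AP} verbatim, but the same argument goes through after a time reversal. Set $m := -a_r \in \N$ and observe that the reversed process $Y_i := X_{-i}$ is zero entropy iff $X_\Z$ is, so predictivity of $m\N$ applied to $Y_\Z$ yields that $X_0 = Y_0$ is measurable with respect to $Y_{m\N} = X_{-m\N}$. Shifting by $r$ using stationarity, $X_r$ is measurable with respect to $X_{r - m\N} = X_{r + \N a_r} \subset X_P$. So in either sign case, for every $r \in R$ the random variable $X_r$ is measurable with respect to $X_P$.

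Combining with hypothesis (i), the whole process $X_\Z$ is measurable with respect to $X_R$, hence with respect to $X_P$, so $P$ is totally predictive. The main (minor) obstacle is only the notational mismatch between the sign of $a_r$ and the convention $P(r) \subset \N$ in Proposition \ref{prop:predict using AP}; once one uses the time-reversal symmetry of zero entropy that is recorded in Section \ref{Section: Notation}, both signs are handled uniformly and the proof is essentially one line beyond the observation that $k\N$ is predictive.
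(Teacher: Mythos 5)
Your proof is correct and takes essentially the same route as the paper: the corollary is deduced from Proposition \ref{prop:predict using AP} by taking $P(r)=a_r\N$, using only the fact that $k\N$ is predictive for every $k\in\N$. The extra time-reversal argument you give for $a_r<0$ is harmless but vacuous, since for negative $a_r$ the progression $r+\N a_r$ is unbounded below and therefore can never be contained in $P\subset\N$.
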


\subsection{Some totally predictive sets}

\begin{theorem}\label{thm:some totally predictive sets}
	The set $\N\setminus Q_i$ is totally predictive for the following sets $Q_i$; $i=1, 2, 3, 4$:
	\begin{enumerate}[(i)]
		\item The set of perfect squares, $Q_1:=\{n^2~:~n \in \N\}$.
		\item $Q_2$ is the set of prime numbers. 
		\item Sets $Q_3\subset \N$ for which there exist an increasing sequence $n_k$ and a sequence $l_k$ such that , $n_k-l_k\to \infty$ and $Q_3\setminus \cup_{t\in \N}[tn_k, tn_k+l_k]$ is finite for all $k \in \N$. 
		\item $Q_4:=\text{IP}(n_1, n_2, \ldots)$ where $n_i\in \N$ is a sequence such that $\frac{n_{i+1}}{n_i}\in \N$ is greater than or equal to $3$.
	\end{enumerate}
	\end{theorem}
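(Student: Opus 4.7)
\smallskip

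\textbf{Plan.} For each $i\in\{1,2,3,4\}$ the plan is to deduce the claim from Corollary~\ref{cor:predict using AP} or, in the hardest case, from its generalisation Proposition~\ref{prop:predict using AP}: exhibit a totally predictive $R\subset\Z$ and, for each $r\in R$, an integer $a_r\neq 0$ (respectively a predictive set $P(r)\subset\N$) with $r+\N a_r\subset \N\setminus Q_i$ (respectively $r+P(r)\subset \N\setminus Q_i$). A useful preliminary fact is that $\N\setminus F$ is totally predictive for every finite $F\subset\N$: for a zero-entropy process each $X_n$ with $n\in F$ is measurable with respect to its shifted future, and iterating downward on $F$ shows that $X_F$ is measurable with respect to $X_{\N\setminus F}$, so total predictivity transfers from $\N$ to $\N\setminus F$. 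This allows one to freely absorb a finite exceptional set into $R$ in each of the four cases.

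\smallskip

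For (ii) (primes), take $a_r=r$ whenever $r\geq 2$: then $r+kr=r(k+1)$ is composite for every $k\geq 1$, so $r+r\N\subset\N\setminus Q_2$; the single exceptional value $r=1$ is absorbed by $R=\N\setminus\{1\}$. For (iii) (windowed sets), for each $r$ outside a finite exceptional set I choose $k$ so that the residue $r\bmod n_k$ falls in the ``safe'' interval $(l_k,n_k)$---possible because $n_k-l_k\to\infty$, and if necessary using a combined modular argument over several values of $k$---and set $a_r=n_k$; this forces $r+n_k\N$ entirely outside $\bigcup_t[tn_k,tn_k+l_k]$ and hence outside $Q_3$ up to the finite correction the definition allows. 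For (iv) (IP-sets), the chain $n_1\mid n_2\mid\cdots$ with $n_{i+1}/n_i\geq 3$ forces $Q_4\subset n_1\Z$ and more generally $Q_4\bmod n_k=\mathrm{IP}(n_1,\ldots,n_{k-1})$; when $r\notin Q_4$ pick $n_k>r$ and observe that $r=r\bmod n_k\notin Q_4\bmod n_k$, so $a_r=n_k$ works, while when $r\in Q_4$ one iterates the argument after dividing through by the smallest generator appearing in the greedy expansion of $r$, handling the finitely many residual ``trivial'' initial sums by absorbing them into $R$.

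\smallskip

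For (i) (perfect squares), when $r\in\N$ is not a perfect square a standard consequence of quadratic reciprocity produces a prime $p$ modulo which $r$ is a quadratic non-residue; then $a_r=p$ works, since every element of $r+p\N$ is $\equiv r\pmod p$, a non-residue, and hence not a perfect square. The \emph{main obstacle} is the case when $r=s^2$ is itself a perfect square: every perfect square is a quadratic residue modulo every integer, so no arithmetic progression $r+a\N$ avoids $Q_1$ (indeed $(s+ak)^2\in r+a\N$ for a $k$ chosen to absorb the cross term). I therefore switch to Proposition~\ref{prop:predict using AP} and construct, for each square $r=s^2$, a non-arithmetic predictive set $P(s^2)\subset\N$ disjoint from the density-zero target $\{m^2-s^2:m>s\}$. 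The construction starts with a Bohr-type predictive set supplied by Corollary~\ref{corollary:predictive_rotation} applied to an appropriate compact-group rotation, and refines it by intersecting with a return-time set of a zero-entropy ppt (predictivity being preserved via Proposition~\ref{prop:zero_entropy_intersect}) so as to evacuate the sparse sequence $\{m^2-s^2\}$. This mirrors Meyer's Riesz-set argument for squares in \cite{MR227697} and is the technical heart of the proof.
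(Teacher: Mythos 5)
Your framework (Corollary \ref{cor:predict using AP}) is the right one, but you have missed the single device that makes all four cases elementary, and the places where you improvise around it are exactly where the proof breaks. The paper takes $R$ to be a set of sufficiently \emph{negative} integers --- the whole process is measurable with respect to $X_{\{-t,-t-1,\dots\}}$ because the time-reversed process also has zero entropy --- so the progressions that must land inside $\N\setminus Q_i$ have the form $e\N-r$ with $r>0$, not $r+e\N$. This sign change is decisive. For the squares one takes $e=3r^2$: every term is $3r^2k-r=r(3rk-1)$, the two factors are coprime and $3rk-1\equiv 2\pmod 3$ is not a square modulo $3$, so no term is a perfect square --- including when $r$ is itself a square, which is precisely the case your forward progressions cannot handle. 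For (iii) the term $mn_k-r$ has residue $n_k-r\pmod{n_k}$, which lies in the safe gap $(l_k,n_k)$ as soon as $n_k-l_k>r$; by contrast your condition $r\bmod n_k\in(l_k,n_k)$ can fail for \emph{every} $k$ (take $n_k=2^k$, $l_k=2^{k-1}$ and $r$ a power of $2$), and the "combined modular argument" you invoke is not supplied. Case (iv) is then reduced to (iii) by checking that $Q_4$ is a $Q_3$-type set with $l_k=n_1+\cdots+n_{k-1}$; your treatment of $r\in Q_4$ ("iterate after dividing by the smallest generator") is not an argument, and the moduli you propose cannot work since $r+n_k\in Q_4$ whenever $r\in\mathrm{IP}(n_1,\dots,n_{k-1})$. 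Only your case (ii) is complete as written.

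The proposed rescue of case (i) for $r=s^2$ is not merely incomplete; it cannot succeed as described. The set you need to avoid, $\{m^2-s^2:m>s\}$, is the value set of $p(x)=x^2+2sx$, a polynomial with $p(0)=0$ and hence with a root modulo every $m$; by Theorem \ref{theorem:n_square_predictive} its image is a Poincar\'{e} set and meets every return-time set, and since $p(qj)\equiv 0\pmod q$ while $p(j)\alpha$ equidistributes for irrational $\alpha$, it also meets every Bohr neighbourhood of $0$. Any set produced by Corollary \ref{corollary:predictive_rotation} followed by Proposition \ref{prop:zero_entropy_intersect} contains a return-time set of a product system (a Bohr set $N(x,U)$ with $x\in U$ contains some $N(W,W)$, and $N(W,W)\cap N(V,V)\supset N(W\times V,W\times V)$), so it necessarily intersects $\{m^2-s^2:m>s\}$. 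The announced "technical heart" of your proof is therefore an impossible construction; the negative-base-point trick removes the need for it entirely.
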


\begin{proof} Fix $t\in\N$. Since $R=\{t, t+1, \ldots \}$ is predictive we have that $X_\Z$ is measurable with respect to $X_{-R}$ for all zero entropy processes $X_\Z$. By Corollary \ref{cor:predict using AP} it is sufficient to show that for all $r \in -R$ there exists $e_r \in \N$ such that
$-r+e_r\N\subset \N\setminus Q_i$ to prove that $\N\setminus Q_i$ is a totally predictive set.

Let $a_r:=3r^2$ for all $r \in \N$. We will prove that $-r+a_r\N\subset \N\setminus Q_1$. Suppose not. Then there is $k \in \N$ such that $-r+3r^2k=r(3rk-1)$ is a perfect square. Since $r$ and $3rk-1$ are prime to each other we have that $3rk-1$ is also a perfect square but this is impossible because it is $-1$ modulo $3$. Thus $\N\setminus Q_1$ is totally predictive.

Let $b_r:=3r$ for all $r \geq 2$. Then we have that $-r+k3r=r(3k-1)$ is never a prime number. Thus we have for $r\geq 2$, $-r+b_r\N\subset \N\setminus Q_2$ and $\N\setminus Q_2$ is totally predictive.

Fix $r\in \N$. Choose $k\in \N$ such that $n_k-l_k>r$. Now for all $t \in \N$ we have that 
$$l_k+n_kt <-r+n_k(t+1)<n_k(t+1).$$ 

Since $Q_3\setminus \cup_{t\in \N}[tn_k, tn_k+l_k]$ is finite there exists $t'\in \N$ such that for all $t\geq t'$, $-r+tn_k\notin Q_3$. Let $c_r=t'n_k$. Then we have that $-r+c_r\N\subset \N\setminus Q_3$ showing that $\N\setminus Q_3$ is totally predictive.

To prove that $\N \setminus Q_4$ is totally predictive, it is sufficient to prove that there exists a sequence of natural numbers $l_k$ such that $n_k-l_k\to \infty$ and $Q_4\setminus \cup_{t\in \N}[tn_k, tn_k+l_k]$ is finite for all $k \in \N$. Let $l_k=n_1+n_2+\ldots n_{k-1}$. 

\begin{enumerate}[(i)]
	\item Clearly 
	$$n_k-l_k > n_k-\sum_{i=1}^{k-1}\frac{1}{3^i}n_k>\frac{1}{2}n_k\to \infty$$ as $k \to \infty$.
	\item Since $n_i|n_{i+1}$ for all $i \in \N$, if $t\in Q_4$ and $t> n_1+ n_2+ \ldots +n_{k-1}$ then $t=rn_k + s$ where $r\in \N$ and $0\leq s\leq n_1+ n_2+ \ldots +n_{k-1}$. Thus we have that 
	$$Q_4\setminus \cup_{t\in \N}[tn_k, tn_k+l_k]\subset \{1, 2 , \ldots ,n_1+ n_2+ \ldots +n_{k-1}\}$$ is a finite set. 
\end{enumerate}
This completes the proof.
\end{proof}
\section{The Squares and Other Sparse Sequences} \label{section:squares are sparse}
Corollary \ref{cor:return time predictive} states that return-time sets are predictive. The converse fails dramatically.

\begin{proposition}\label{prop:return not predictive}
There are totally predictive sets which do not contain a return-time set. 
\end{proposition}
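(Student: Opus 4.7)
The plan is to exhibit an explicit example. Take
\[
P := \N \setminus \{n^2 : n \in \N\},
\]
the complement of the set of perfect squares. By Theorem \ref{thm:some totally predictive sets}(i), $P$ is totally predictive, so the only thing left to check is that no return-time set is a subset of $P$; equivalently, that every return-time set meets the perfect squares.

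This last statement is exactly Furstenberg's ergodic-theoretic form of the S\'ark\"ozy theorem: for every ppt $(X, \mathcal B, \mu, T)$ and every measurable $U \subset X$ with $\mu(U)>0$, there exists $n \in \N$ such that $\mu(U \cap T^{-n^2} U)>0$. In particular, the return-time set $N(U,U)$ contains a positive perfect square, so $N(U,U) \not\subset P$. Combined with the total predictivity of $P$, this proves the proposition.

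The argument is short because both halves are already available: the total predictivity of $P$ is Theorem \ref{thm:some totally predictive sets}(i), and the square-hitting property of return-time sets is a classical theorem of Furstenberg and S\'ark\"ozy. The only conceptual point is to notice that these two results combine to give the desired example, and I do not anticipate any real obstacle. (If one wished to avoid citing Furstenberg–S\'ark\"ozy, any of the other sets $Q_i$ from Theorem \ref{thm:some totally predictive sets} whose members are known to appear in every return-time set could be used in place of the squares, but the square example is the cleanest.)
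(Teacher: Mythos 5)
Your proof is correct and is essentially identical to the paper's: both take $\N\setminus\{n^2 : n\in\N\}$, invoke Theorem \ref{thm:some totally predictive sets}(i) for total predictivity, and use the Furstenberg--S\'ark\"ozy theorem (stated in the paper as Theorem \ref{theorem:n_square_predictive}, that the squares form a Poincar\'e set) to see that every return-time set meets the squares.
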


To prove this we need to consider the dual of return-times sets and recall related results: A set $N\subset \N$ is called a \emph{Poincar\'{e} set} (also called first return sets) if it intersects every return-time set. It is well-known that the following sets are Poincar\'{e} sets \cite[Page 72]{MR603625}:

\begin{enumerate}[(i)]
	\item $p(\N)$ where $p$ is an integer polynomial which has a root modulo $m$ for every $m$.
	\item $(M-M)\cap \N$ where $M$ is an infinite set.
\end{enumerate}

Following the method outlined in \cite[Pages 70-72]{MR603625} we arrive at a well-known converse to the fact that the first example is a Poincar\'{e} set. 
\footnote{There are some minor typos in \cite[Pages 70-72]{MR603625}; the condition that the polynomial $p$ has a root in $\N$ should be replaced by the condition that $p$ has a root modulo $m$ for all $m \in \N$.}

\begin{theorem}\label{theorem:n_square_predictive}
	Let $p$ be a polynomial such that $p(\N)\subset \N$. Then $p(\N)$ is a Poincar\'{e} set if and only if $p$ has a root modulo $m$ for all $m\in \N$.
\end{theorem}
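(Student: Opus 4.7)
The plan is to prove the two directions separately, with one being a short explicit construction and the other being essentially the classical Sárközy--Furstenberg spectral argument referenced in the excerpt.

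For the ``only if'' direction, I would argue by contraposition. Suppose $p$ has no root modulo some fixed $m \in \N$, so that $p(n) \not\equiv 0 \pmod{m}$ for every $n \in \N$. Consider the probability preserving transformation $(\Z/m\Z, \mu_H, T)$ where $T(x) = x+1 \bmod m$ and $\mu_H$ is the uniform measure, and take $U = \{0\}$. A direct computation gives
$$N(U,U) = \{n \in \N : \mu_H(U \cap T^{-n}U) > 0\} = \{n \in \N : n \equiv 0 \pmod{m}\} = m\N.$$
By hypothesis $p(\N) \cap m\N = \emptyset$, so $p(\N)$ is disjoint from this return-time set and therefore not Poincar\'e.

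For the ``if'' direction, the excerpt explicitly states this as a known criterion for Poincar\'e sets, and I would cite the method of \cite{MR603625}. The outline is as follows: given a ppt $(X,\mathcal{B},\mu,T)$ and $U$ with $\mu(U)>0$, one reduces by ergodic decomposition to the ergodic case and aims to prove the stronger conclusion that $\liminf_N \frac{1}{N}\sum_{n=1}^N \mu(U\cap T^{-p(n)}U)>0$. Setting $f = 1_U$ and letting $\sigma_f$ be the spectral measure on $\mathbb{T}$ associated with $f$, one writes
$$\frac{1}{N}\sum_{n=1}^N \int f \cdot f\circ T^{p(n)}\,d\mu = \int_{\mathbb{T}} \frac{1}{N}\sum_{n=1}^N e^{2\pi i p(n)t}\,d\sigma_f(t).$$
The inner Weyl average is handled by two cases: for irrational $t$, Weyl equidistribution (applied to the polynomial $p$) shows the average tends to $0$; for a rational $t = a/m'$, the sequence $p(n) \bmod m'$ is periodic in $n$ with period $m'$, and since $p$ has a root $n_0$ modulo $m'$ the residues $p(n_0 + km')\equiv 0$, which makes the average converge to an explicit nonnegative quantity. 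The atomic part of $\sigma_f$ contributes $\sigma_f(\{0\}) = \mu(U)^2 > 0$ from the trivial eigenvalue, and the remaining rational atoms and the continuous part are controlled by a dominated convergence argument using the bound $|e^{2\pi i p(n)t}|=1$ together with the Kronecker factor description.

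The main obstacle is only in the ``if'' direction, and it is already in the literature: the careful bookkeeping between the continuous and atomic parts of $\sigma_f$ and the justification of interchanging limit and integral via the spectral theorem. The ``only if'' direction, which is the correction to the original formulation noted in the footnote, is immediate once one recognises that the rotation on $\Z/m\Z$ produces $m\N$ as its return-time set.
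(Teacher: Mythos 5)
Your ``only if'' direction is correct and is precisely the content of the paper's footnote: the rotation $x\mapsto x+1$ on $\Z/m\Z$ with $U=\{0\}$ has return-time set $m\N$, and $p(\N)$ misses it when $p$ has no root modulo $m$. Be aware that the paper does not actually prove this theorem; it only cites Furstenberg's book, so the relevant comparison is with the standard spectral argument there.

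In the ``if'' direction your sketch has a genuine gap at the rational spectrum. You assert that for $t=a/m'$ the Weyl average ``converges to an explicit nonnegative quantity'' because $p$ has a root modulo $m'$. The limit is $c(a/m')=\frac{1}{m'}\sum_{j=0}^{m'-1}e^{2\pi i p(j)a/m'}$, and the existence of one summand equal to $1$ says nothing about the sign of its real part: interpolating modulo $5$ so that $p(j)\equiv 0,2,2,3,3\pmod 5$ for $j\equiv 0,\dots,4$ (with $p(0)=0$, so $p$ is intersective) gives $\sum_j e^{2\pi i p(j)/5}=1+4\cos(4\pi/5)<0$. Hence your unrestricted computation only yields $\lim_N\frac1N\sum_{n\le N}\mu(U\cap T^{-p(n)}U)=\sum_{t\in\Q/\Z}c(t)\,\sigma_f(\{t\})$ with weights of indefinite sign, and positivity does not follow; the continuous part contributing $0$ is neutral, not helpful. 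The actual argument must restrict the average to a subprogression $n\equiv n_M\pmod M$ with $p(n_M)\equiv 0\pmod M$: then $c(a/m')=1$ for every $m'\mid M$, so those atoms contribute at least $\sigma_f(\{0\})=\mu(U)^2>0$ in the ergodic case, the atoms with denominator not dividing $M$ contribute less than $\epsilon$ once $M=K!$ with $K$ large (by summability of the atomic masses), and the irrational/continuous part still averages to $0$ along the subprogression by Weyl applied to $k\mapsto p(n_M+Mk)$. Equivalently, project $1_U$ onto the rational Kronecker factor, where $T^{p(n)}g=g$ for such $n$. Without this restriction your argument does not close.
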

Thus $\{n^k; n\in \N\}$ is a Poincar\'{e} set for all $k\in \N$. This condition on integer polynomials goes back at least to \cite{MR516154}; polynomials satisfying this condition are called \emph{intersective}. It is obvious that polynomials with an integer root are intersective but there are polynomials which do not have integer roots and are still intersective: For instance, consider the polynomial $(x^2-13)(x^2-17)(x^2-221)$ \cite[Page 3]{MR0195803}. However it is well-known that irreducible polynomials (of degree greater than 1) are not intersective \cite{MR3556829,gegner2018notes}.

\begin{proof}[Proof of Proposition \ref{prop:return not predictive}]
	By Theorem \ref{thm:some totally predictive sets}, we know that $\N\setminus \{n^2~:~n \in \N\}$ is totally predictive. By Theorem \ref{theorem:n_square_predictive} the set $\{n^2~:~n \in \N\}$ is a Poincaré set and intersects every return-time set. Thus $\N\setminus \{n^2~:~n \in \N\}$ is a totally predictive set which does not contain a return-time set.
\end{proof}

\begin{remark}
Infinite difference sets are also Poincaré sets. We could have proved Proposition \ref{prop:return not predictive} by finding an infinite difference set whose complement is totally predictive. We have in fact already demonstrated such a set. For instance the complement of $\Delta(1, 1+3, 1+3+3^2, \ldots )$ is totally predictive. This follows from Theorem \ref{thm:some totally predictive sets} since
	$$\N\setminus IP(1, 3, 3^2, \ldots )\subset \N\setminus \Delta(1, 1+3, 1+3+3^2, \ldots ).$$
\end{remark}

\begin{remark}
To prove that $\N\setminus \{n^2~:~n \in \N\}$ is totally predictive, we constructed for every $n \in \N$ a return-time set in $P_n$ such that $-n+P_n\subset \N\setminus \{n^2~:~n \in \N\}$. The following question arises naturally.
	\begin{question}
	Let $N_i; i\in \N\cup\{0\}$ be a sequence of return-time sets. Enumerate the elements of $N_0$ in the increasing order: 
	$$N_0:=\{n_1, n_2, \ldots, \}.$$
	Does the set $\cup_{i \in \N}(n_i+N_i)$ necessarily contain a return-time set?
\end{question}
\end{remark}

In general one may pursue the method of obtaining new predictive sets from some given predictive sets as mentioned in Proposition \ref{prop:predict using AP}. However this method has some limitations: Repeated applications of Proposition \ref{prop:predict using AP} can't be used to prove that the set $\N\setminus \{n^3~:~n \in \N\}$ is predictive. 

\begin{question}\label{question:distance_increase}
Are $\Delta_r^\star$-sets always totally predictive? In particular (see Remark \ref{remark:about deltastar}) suppose $n_i$ is a strictly increasing sequence such that $n_{i+1}-n_i$ is also strictly increasing. Is the set $\N\setminus\{n_i~:~i \in \N\}$ totally predictive? Is the set $\{n_i~:~i \in \N\}\cup (\Z\setminus \N)$ a Riesz set? We emphasise that we do not know this even for $n_i=i^k$ where $k\geq 3$ is odd (for even integers it follows from Theorems \ref{thm:some totally predictive sets} and \ref{thm:totally predictive open is Riesz}).
\end{question}

There are a few partial results for this question. In \cite{wallen1970fourier}, Wallen gave a short and ingenious proof of the fact that if $\mu$ is a measure such that $\supp(\hat\mu)\cap\N\subset \{n_i~:~i \in \N\}$ where $n_i$ is a sequence as above then $\mu \star \mu$ is absolutely continuous.

For the case when $n_i=i^k$ for odd $k\geq 3$ we prove a partial result in the following proposition and its corollary using a suggestion by Elon Lindenstrauss.

\begin{proposition}
	\label{prop:elon} Let $Q\subset \N$ be a non-empty set such $\N\setminus Q$ is $\Delta_3^\star$. Then there does not exists a singular probability measure on $\mathbb{T}$ such that $\supp(\hat\mu)\subset Q\cup(-Q)\cup\{0\}$.
\end{proposition}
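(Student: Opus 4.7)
The plan is to translate the hypotheses into the language of Gaussian processes, where sum-freeness becomes an independence statement, and then derive a contradiction with the fact that a zero-entropy process has a nontrivial $Y_0$.

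First I would reduce to the case where $\mu$ is symmetric by replacing $\mu$ with $\tfrac12(\mu+\check\mu)$, where $\check\mu(A)=\mu(-A)$. Symmetrization preserves singularity (if $\mu$ lives on a Lebesgue-null set $E$, then the symmetrized measure lives on $E\cup(-E)$) and it preserves the Fourier-support containment $\supp(\hat\mu)\subseteq Q\cup(-Q)\cup\{0\}$. After this reduction $\hat\mu$ is real, so I can associate with it the centered real Gaussian process $Y_\Z$ whose spectral measure is $\mu$, and by property (ii) of Gaussian processes in Section \ref{Section:Comments Gaussian} the process $Y_\Z$ has zero entropy.

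The sum-free hypothesis now yields a key independence. For $a,b\in Q$, the positive integer $a+b$ lies outside $Q$ by sum-freeness, and trivially outside $(-Q)\cup\{0\}$, so $\hat\mu(a+b)=0$. Hence $\E[Y_a Y_{-b}]=\hat\mu(a+b)=0$ for all $a,b\in Q$, and being jointly Gaussian the families $(Y_n)_{n\in Q}$ and $(Y_n)_{n\in -Q}$ are independent. In particular the closed linear spans
\[
V^+:=\overline{\text{span}}(Y_n:n\in Q),\qquad V^-:=\overline{\text{span}}(Y_n:n\in -Q)
\]
are orthogonal in $L^2$. Separately, the Fourier-support condition gives $\E[Y_0 Y_n]=\hat\mu(n)=0$ for every $n\in\N\setminus Q$ (and symmetrically for $-\N\setminus -Q$), so $Y_0$ is independent of both $(Y_n)_{n\in\N\setminus Q}$ and $(Y_n)_{n\in -\N\setminus -Q}$. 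Meanwhile the zero entropy of $Y_\Z$, together with Proposition \ref{prop:Gaussian_predict}, forces $Y_0\in\overline{\text{span}}(Y_n:n\ge 1)$ and, by symmetry of $\mu$ applied to the reversed process, $Y_0\in\overline{\text{span}}(Y_n:n\le -1)$.

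From these ingredients I would aim to conclude that $Y_0\in V^+$ and $Y_0\in V^-$, which by the orthogonality $V^+\perp V^-$ gives $Y_0\in V^+\cap V^-=\{0\}$, contradicting $\E[Y_0^2]=\hat\mu(0)=1$. Concretely, write simultaneously $Y_0=\lim_k(f_k+g_k)$ with $f_k\in V^+$, $g_k\in \overline{\text{span}}(Y_n:n\in\N\setminus Q)$, and $Y_0=\lim_k(u_k+v_k)$ with $u_k\in V^-$, $v_k\in\overline{\text{span}}(Y_n:n\in -\N\setminus -Q)$. The orthogonality $V^+\perp V^-$ forces $\|f_k-u_k\|^2=\|f_k\|^2+\|u_k\|^2\ge 2-o(1)$ (using $Y_0\perp g_k$, $Y_0\perp v_k$ to see $\|f_k\|,\|u_k\|\ge 1-o(1)$), while the fact that both sums tend to $Y_0$ forces $(f_k-u_k)+(g_k-v_k)\to 0$; the plan is to leverage these two facts, together with the four orthogonalities $Y_0\perp g_k, Y_0\perp v_k$, to squeeze $f_k\to Y_0$ and $u_k\to Y_0$.

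The main obstacle lies precisely in this last squeeze: the subspaces $V^+$ and $\overline{\text{span}}(Y_n:n\in\N\setminus Q)$ need \emph{not} be orthogonal (for example, with $Q=\{n\equiv 1\Mod 3\}$, one has $4-3=1\in Q$, so $\E[Y_4 Y_3]=\hat\mu(1)$ can be nonzero), and a one-sided approximation from the future alone does not pin $Y_0$ down to $V^+$. The technical heart of the proof is to use the additional independence $V^+\perp V^-$ together with the \emph{simultaneous} past-and-future approximations to close the gap. This is where I expect the real work of the argument to be concentrated.
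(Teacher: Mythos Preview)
Your proposal has a genuine gap, and you have correctly located it: the step ``$Y_0\in V^+$'' is not established. The decomposition $Y_0=\lim(f_k+g_k)$ with $f_k\in V^+$ and $g_k$ in the span of $\{Y_n:n\in\N\setminus Q\}$ does not exist in general, because those two subspaces need not span $\overline{\text{span}}(Y_n:n\in\N)$ and are not complementary. The extra input $V^+\perp V^-$ does not close this gap either: knowing that two orthogonal subspaces each sit inside $L^2$ does not force a vector that lies in the full future span and in the full past span to lie in $V^+$ (or $V^-$). The ``squeeze'' you outline would require control on $\langle g_k,v_k\rangle$ or on $\langle f_k,g_k\rangle$ that the hypotheses simply do not provide.

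The paper's proof is both different and much shorter, and it uses an observation you stopped just short of. You noticed that $Q+Q$ is disjoint from $Q$ and used this to get $V^+\perp V^-$. The same sum-freeness also gives $(Q-Q)\cap Q=\emptyset$: if $q,q'\in Q$ with $q>q'$ and $q-q'\in Q$, then $q=(q-q')+q'\in Q+Q$, a contradiction. Hence $\hat\mu(q-q')=0$ for distinct $q,q'\in Q$, i.e.\ the family $\{e^{2\pi i qx}:q\in Q\}$ is \emph{orthonormal} in $L^2(\mu)$. Bessel's inequality (equivalently, the Cauchy--Schwarz computation in the paper) applied to the unit vector $1\in L^2(\mu)$ then gives
\[
\sum_{q\in Q}|\hat\mu(q)|^2=\sum_{q\in Q}|\langle 1,e^{2\pi i qx}\rangle_{L^2(\mu)}|^2\le \|1\|_{L^2(\mu)}^2=1.
\]
Since $\hat\mu$ vanishes off $Q\cup(-Q)\cup\{0\}$ and $|\hat\mu(-q)|=|\hat\mu(q)|$, this yields $\hat\mu\in\ell^2(\Z)$, so $\mu$ is absolutely continuous with an $L^2$ density---contradicting singularity. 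Note that the zero-entropy/Gaussian detour is never invoked; neither S--V nor Proposition~\ref{prop:Gaussian_predict} is needed.
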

For a subset $Q\subset \N$ recall the notation 
$$Q_n:=Q\cap\{1,2,\ldots, n\}.$$ 
\begin{proof}
	Let $\mu$ be a probability measure on $\mathbb{T}$ such that $\supp(\hat \mu)\subset Q\cup(-Q)\cup\{0\}$. Let $c$ be a trigonometric polynomial $c(t):=\sum_{j\in Q} a_j e^{2\pi i jt}$ (where the sequence $a_j$ is eventually zero). It follows from the Cauchy-Schwarz inequality that
	$$|\sum_{j\in Q}\overline{ a_j }\hat{\mu}(j)|^2= |<1, c>|^2\leq \|1\|_{L^2(\mu)}^2\|c\|^2_{L^2(\mu)}\leq \sum_{j, j'\in Q} a_j \overline{a_{j'}} \hat{\mu}(j'-j).$$
	By Remark \ref{remark:about deltastar} we know that $Q+Q$ is disjoint from $Q$. It follows that the right hand side is equal to $\sum_{j\in Q}|a_j|^2$. Applying the inequality thus obtained to the polynomial $c(t):=\sum_{j\in Q_n} \hat{\mu}(j) e^{2\pi i jt}$ we get that
	$$\sum_{s\in Q_n} |\hat{\mu}(s)|^2\leq 1$$
	for all $n\in \N$. This proves that $\mu$ is not singular with respect to Lebesgue. 
\end{proof}

\begin{corollary}
	Let $k \geq 2$ and $Q:=\{n^k~:~n \in \N\}$. Then there does not exists a singular probability measure on $\mathbb{T}$ such that $\supp(\mu)\subset Q\cup(-Q)\cup\{0\}$.
\end{corollary}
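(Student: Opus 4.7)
The plan is to derive the corollary directly from Proposition \ref{prop:elon}. With $Q = \{n^k : n \in \N\}$, the task reduces to verifying that $\N \setminus Q$ is $\Delta_3^\star$. By the reformulation in Remark \ref{remark:about deltastar}(ii), this is equivalent to the assertion that $(Q+Q)\cap Q = \emptyset$; in other words, that there do not exist positive integers $a, b, c$ with $a^k + b^k = c^k$.

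For $k \geq 3$ this is precisely Fermat's Last Theorem, so the hypothesis of Proposition \ref{prop:elon} is in force and the conclusion follows at once.

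The case $k = 2$ lies outside the scope of Proposition \ref{prop:elon} on account of Pythagorean triples. It can instead be recovered from Meyer's result (item (ii) in the list following Theorem \ref{theorem:Riesz}) to the effect that $\{n^2 : n \in \N\} \cup (\Z \setminus \N)$ is a Riesz set. Since $Q \cup (-Q) \cup \{0\} \subset Q \cup (\Z \setminus \N)$, any measure $\mu$ with $\supp(\hat \mu) \subset Q \cup (-Q) \cup \{0\}$ is then automatically absolutely continuous, and in particular not singular.

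The entire substance of the proof for $k \geq 3$ is therefore the invocation of Fermat's Last Theorem; the translation between the $\Delta_3^\star$ condition and the sumset condition via Remark \ref{remark:about deltastar}(ii), and the subsequent application of Proposition \ref{prop:elon}, are purely formal. The obstacle to an elementary argument is thus concentrated in a single deep number-theoretic input, which seems unavoidable given how directly the hypothesis of Proposition \ref{prop:elon} translates into the Fermat equation.
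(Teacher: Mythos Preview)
Your argument is correct and, for $k\geq 3$, coincides with the paper's: both reduce to Proposition~\ref{prop:elon} via Fermat's Last Theorem, using the reformulation of $\Delta_3^\star$ in Remark~\ref{remark:about deltastar}(ii). The only difference is organizational: the paper splits according to parity, handling all even $k$ (including $k=2$) through its own results---Theorem~\ref{thm:some totally predictive sets} shows $\N\setminus\{n^2:n\in\N\}$ is totally predictive, and then Proposition~\ref{prop: singular measure prob riesz} rules out a singular probability measure with Fourier support avoiding that set---whereas you isolate $k=2$ and appeal to Meyer's Riesz-set theorem instead. Both routes are valid; the paper's has the mild advantage of being self-contained, while yours avoids invoking Fermat only for odd exponents at the cost of citing an external result for $k=2$.
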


\begin{proof}
	For even integers $k$ this result follows from Theorem \ref{thm:some totally predictive sets} and Proposition \ref{prop: singular measure prob riesz}. For odd $k$, it follows from Fermat's last theorem \cite{MR1333035,MR1333036} that $Q-Q$ is disjoint from $Q$. Thus by Proposition \ref{prop:elon} there does not exists a singular probability measure on $\mathbb{T}$ such that the support of its Fourier coefficients are contained in $Q\cup(-Q)\cup\{0\}$.
\end{proof}

\section{Predictive sets for $\Z^d$-actions}\label{section:zd}
In this section we will see how our principal results extend to $\Z^d$ actions. Since the proofs are essentially the same we will not repeat the details but just indicate where the key differences lie. First let us establish some notation.

For us \emph{random fields} will mean translation-invariant random fields with finite state space (unless otherwise mentioned). To begin we will first recall a formula for entropy of $\Z^d$ actions. Fix $d\geq 1$. We will use $\mathbf 0$ to denote the origin in $\Z^d$. A \emph{total invariant order} $<$ on $\Z^d$ is a total order such that if $i<j$ then $i+ k< j+ k$ for all $ k\in \Z^d$. Fix a total invariant order on $\Z^d$. For a given vector $i$ we write 
$$i^{(-)}:=\{j\in \Z^d~:~j<i\}$$
to denote the set of elements less that $i$. 

For a random field $(X_i)_{i\in \Z^d}$ and $P\subset \Z^d$, let $X_P$ denote the collection of random variables $(X_i)_{i\in P}$. Abusing notation we will also use $X_{\Z^d}$ to denote the random field itself. The entropy of the random field is given by the formula
$$h(X_\zd):=H(X_\0~|~X_{\0^{(-)}}).$$
This was established for the lexicographic ordering in \cite{MR0316680} but goes through without changes for any total invariant order. It is not difficult to see that total invariant orders on $\Z^d$ are determined by cones $C\subset \Z^d\setminus \{\0\}$ for which $C$, $-C$ and $\{\0\}$ form a partition of $\zd$. Here $C$ is the set of elements less that $\0$, $-C$ is the set of elements which are greater than $\0$ and invariance determines all other relations. 

Thus if the random field $X_{\zd}$ has zero entropy then $H(X_\0~|~X_{\0^{(-)}})=0$. A set $P\subset \0^{(-)}$ is called \emph{$\zd$-predictive} if for all zero entropy random fields $X_\zd$, $H(X_\0~|~X_{P})=0$. We now prove that the restriction of a $\zd$-predictive set to a subgroup is a predictive set for that subgroup.

\begin{proposition}
	Let $P$ be $\Z^d$-predictive and $\phi: \Z^{d'}\to \Z^d$ be an injective group homomorphism. Then $\phi^{-1}(P)$ is $\Z^{d'}$-predictive (with an appropriate invariant total order).
\end{proposition}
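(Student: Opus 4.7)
The plan is to lift a zero-entropy $\Z^{d'}$-random field $Y$ to a zero-entropy $\Z^d$-random field $X$ by taking independent copies of $Y$ indexed by the cosets of $\phi(\Z^{d'})$ in $\Z^d$, apply the hypothesized $\Z^d$-predictivity of $P$ to $X$, and then transfer the conclusion back to $Y$ via the independence built into the lift. The ``appropriate'' invariant total order on $\Z^{d'}$ is the pullback along $\phi$ of the given order on $\Z^d$: declare $j<\0$ in $\Z^{d'}$ iff $\phi(j)<\0$ in $\Z^d$. Since $\phi$ is an injective group homomorphism this is a well-defined total invariant order, and under it one has $\phi^{-1}\bigl(\0^{(-)}_{\Z^d}\bigr)=\0^{(-)}_{\Z^{d'}}$, so in particular $\phi^{-1}(P)\subset\0^{(-)}_{\Z^{d'}}$, as required for $\phi^{-1}(P)$ to even be a candidate predictive set.

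To build $X$, let $Y$ be a zero-entropy $\Z^{d'}$-random field on $(\Omega,\mu)$ with underlying shift action $S$. Put $H:=\phi(\Z^{d'})$ and pick a set of coset representatives $R\subset\Z^d$ with $\0\in R$, so that every $i\in\Z^d$ has a unique decomposition $i=\phi(j(i))+r(i)$ with $r(i)\in R$. On the product space $(\Omega^R,\mu^R)$ define
$$X_i\bigl((\omega_r)_{r\in R}\bigr):=Y_{j(i)}\bigl(\omega_{r(i)}\bigr).$$
A direct cocycle bookkeeping produces a measure-preserving $\Z^d$-action $T$ on $\Omega^R$---combining the shift on cosets with appropriate $S$-translates in each fiber---such that $X_i=X_\0\circ T^i$. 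In particular $X$ is a stationary, finite-valued $\Z^d$-random field, and the construction is the natural higher-dimensional analogue of the $W$-randomization trick from Section~\ref{Section: Notation}.

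The decisive point is that $X$ has zero entropy. Since $X_\0=Y_\0(\omega_\0)$ is $\sigma(\omega_\0)$-measurable, while for any $i$ with $r(i)\neq\0$ the variable $X_i$ is a function of the independent coordinate $\omega_{r(i)}$, the standard fact that conditioning on an independent sigma-algebra does not affect the conditional entropy gives
$$h(X)=H\bigl(X_\0\mid X_{\0^{(-)}_{\Z^d}}\bigr)=H\bigl(Y_\0\mid Y_{\0^{(-)}_{\Z^{d'}}}\bigr)=h(Y)=0,$$
where the middle equality uses $\{i\in\0^{(-)}_{\Z^d}:r(i)=\0\}=\phi(\0^{(-)}_{\Z^{d'}})$, precisely what the pullback order delivers. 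Applying $\Z^d$-predictivity of $P$ to $X$ yields $H(X_\0\mid X_P)=0$, and the same independence argument, now with $\{p\in P:r(p)=\0\}=\phi(\phi^{-1}(P))$, rewrites this as $H\bigl(Y_\0\mid Y_{\phi^{-1}(P)}\bigr)=0$, so $\phi^{-1}(P)$ is indeed $\Z^{d'}$-predictive. I expect the main obstacle to be checking that the cocycle defining $T$ really makes $T$ a $\Z^d$-action on the infinite product $\Omega^R$ when $d'<d$, together with a clean justification that the entropy formula used above continues to apply to the resulting co-induced system $X$.
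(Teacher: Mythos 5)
Your proposal is correct and follows essentially the same route as the paper: lift $Y$ to a $\Z^d$-field by placing independent copies on the cosets of $\phi(\Z^{d'})$, pull back the total order along $\phi$, and use independence to collapse the conditional entropies onto $\phi(\Z^{d'})$. The only difference is bookkeeping: the paper parametrizes the lift via a retraction $\psi$ with $\psi\circ\phi=\mathrm{id}$, whereas you use coset representatives (which in fact sidesteps the point that such a $\psi$ need not exist, e.g.\ for $\phi(n)=2n$), and the stationarity and entropy-formula checks you flag as remaining obstacles are exactly the steps the paper asserts without further comment.
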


\begin{proof}
	Let $X'_{\Z^{d'}}$ be a zero entropy random field. Let $X^{\overline i}_{\Z^{d'}}; \overline i \in \Z^{d}/ \phi(\Z^{d'})$ be independent copies of the process $X'_{\Z^{d'}}$ and $\psi: \Z^d\to \Z^{d'}$ be a group homomorphism such that $\psi\circ \phi$ is the identity map. Given $i\in \zd$, denote by $\overline{i}\in \Z^{d}/ \phi(\Z^{d'})$ the image under the quotient map. Now consider the process $X_{\zd}$ given by 
	$$X_i:=X^{\overline i}_{\psi(i)}.$$

	Take the total invariant order on $\Z^{d'}$ where $i<_{\Z^{d'}}j$ if and only if $\phi(i)<_{\zd}\phi(j)$. Let us denote by $\0_d$ and $\0_{d'}$ the origin in $\zd$ and $\Z^{d'}$ respectively. It follows that $\phi^{-1}({\0_d}^{(-)})=\0_{d'}^{(-)}$.
	
	Since $X_{\phi(\Z^{d'})}$ is independent of $X_{\Z^d\setminus \phi(\Z^{d'})}$ we have
	$$H(X_{\0_d}~|~X_{{\0_d}^{(-)}})=H(X_{\0_d}~|~X_{{\0_d}^{(-)}\cap \phi(\Z^{d'})} ) = H(X'_{\0_{d'}}~|~X'_{\phi^{-1}({\0_d}^{(-)})})=H(X'_{\0_{d'}}~|~X'_{{\0_{d'}}^{(-)}})=0.$$
	
	This proves that $X_{\zd}$ is a zero entropy process. In a similar vein we have that
	$$0=H(X_{\0_d}~|~X_P)=H(X_{\0_d}~|~X_{P\cap \phi(\Z^{d'})})=H(X'_{\0_{d'}}~|~X'_{\phi^{-1}(P)}).$$
	Thus $\phi^{-1}(P)$ is $\Z^{d'}$-predictive.
\end{proof}
Given a $\zd$ probability preserving action $(Y, \mu, T)$ and $U\subset Y$ such that $\mu(U)>0$, we write 
$$N_\zd(U,U):=\{i\in \0^{(-)}~:~\mu(T^i(U)\cap U)>0\}.$$
Such a set is called a \emph{$\zd$-return-time set}. Similarly given $i_1, i_2, \ldots\in \zd\setminus \{\0\}$, we write 
$$\text{SIP}^-(i_1, i_2,\ldots):=\{\sum_{k\in \N}\epsilon_k i_k~:~ \epsilon_i=0,1,-1\}\cap \0^{(-)}.$$
Such sets are called SIP sets. A set $P\subset \0^{(-)}$ is called $\SIP$ if it intersects every SIP set.

The facts stated below follow as in the case for $\Z$-actions. 
\begin{enumerate}[(i)]
	\item 	$\zd$-return-time sets
	\begin{enumerate}[(a)]
		\item $\zd$-return-time sets are predictive.
		\item If $P$ is a $\zd$-predictive set and $Q$ is a $\zd$-return-time set of a zero entropy system then $P\cap Q$ is also $\zd$-predictive.
		\item Let $P$ be a $\zd$-predictive set, $Y$ a compact group and $T: \Z^d\times Y\to Y$ act on $Y$ by rotations. If $ x \in U\subset Y$ and $U$ is open then
		$$P\cap\{i\in \zd~:~T^i(x)\in U\}$$
		is $\zd$-predictive.
		\item  If $P$ is a $\Z^d$-predictive set then $\zd\setminus(P\cup (-P))$ cannot contain the support of the Fourier coefficients of a singular probability measure on $\mathbb T^d$.
	\end{enumerate}
	\item SIPs
	\begin{enumerate}[(a)]
		\item $\zd$-predictive sets are $\SIP$.
		\item $\zd$-predictive sets have bounded gaps.
		\item Given any SIP set $P\subset \Z^d$, there exists a weak mixing zero entropy $\zd$-Gaussian random field for which $X_{\0}$ is independent of $X_{i}$ for all $i\notin P$.
	\end{enumerate}
\item  Linear predictivity
\begin{enumerate}[(a)]
	\item If $X_{\Z^d}$ is a $\zd$-Gaussian random field and $P\subset \Z^d$. Then $\mathbb E(X_0~|~X_P)=X_0$ if and only if $X_0$ belongs to the linear span of $X_P$. In other words, $X_P$ can predict $X_0$ if and only if $X_P$ can linearly predict $X_0$. 
	\item Let $X_{\Z^d}$ be a real-valued (possibly infinite-valued) process on a probability space $(\Omega, \mathcal B, \mu)$ such that $X_{\0}\in L^2(\mu)$. If the spectral measure of $X_{\Z^d}$ is singular and $P$ is a predictive set then $X_P$ can linearly predict $X_{\0}$. 
\end{enumerate}
\end{enumerate}

To prove that $\zd$-return-time sets are $\zd$-predictive exactly the same ideas apply. To prove that $\zd$-predictive sets are $\SIP$ and linear predictivity we will need to use the theory of $\zd$-Gaussian random fields.

A \emph{$\zd$-Gaussian random field} is a complex valued stationary process $X_\zd$ such that $X_{i_1}, X_{i_2}, \ldots, X_{i_m}$ is jointly Gaussian for all distinct $i_1, i_2, \ldots, i_m\in \zd$. We shall assume throughout that $X_i$ have mean zero for all $i \in \zd$. Since jointly distributed Gaussian random variables are determined by their covariance, $\zd$-Gaussian random fields are determined by their autocorrelation sequence: $\mathbb E(X_0\overline{X_j}); j\in \zd$ and thereby the corresponding spectral measure $\mu$ on $\mathbb T^d$ for which 
$$\int_{\mathbb T^d}e^{2\pi i <j,x>}d \mu(x)= \mathbb{E}(X_0\overline{X_j})$$
where $<\cdot, \cdot>$ is the usual inner product. Further every finite measure $\mu$ on $\mathbb{T}^d$ uniquely determines a $\zd$-Gaussian random field with spectral measure $\mu$.
We now mention some facts about $\zd$-Gaussian random fields $X_{\zd}$ which we need to prove the facts stated above.
\begin{enumerate}[(i)]
	\item A $\zd$-Gaussian random field is ergodic if and only if the spectral measure is continuous if and only if it is weak mixing (the proofs follow the same strategy as in \cite[Pages 191 and 368]{MR832433}).
	\item A $\zd$-Gaussian random field has zero entropy if and only if the spectral measure is singular \cite[Theorem 2.1]{MR1231420}. The proof for the direction that we need (if the spectral measure is singular then the $\zd$-Gaussian random field has zero entropy) can also be found in \cite[page 3]{MR1209421}.
\end{enumerate}

\bibliographystyle{alpha}
\bibliographystyle{abbrv}
\bibliography{predictive}

\end{document}